\title{Hidden regular variation for stochastic recursions with diagonal matrices\ 
    \thanks{E.D. was supported by NCN grant 2019/33/B/ST1/00207. S.M. was supported by DFG grant ME 4473/2-1.}} 
\author{%
  Ewa~Damek\footnote{University of Wroc\l aw, Poland.
    {ewa.damek@math.uni.wroc.pl}}
  \and 
  Sebastian~Mentemeier\footnote{University of Hildesheim,
    Germany. {mentemeier@uni-hildesheim.de} }}
\DeclareMathOperator{\Id}{\mathrm{I}}
\DeclareMathOperator{\N}{\mathds{N}}
\DeclareMathOperator{\E}{\mathds{E}}
\DeclareMathOperator{\Cf}{\mathcal{C}}
\newcommand{\supp}{\mathrm{supp}}
\newcommand{\interior}[1]{\mathrm{int}(#1)}
\def\R{{\mathbb{R}}}
\newcommand{\s}{\sigma}
\renewcommand{\a}{\alpha }
\newcommand{\eps}{\varepsilon}
\renewcommand{\P}{\mathds{P}}
\newcommand{\m}[1]{\bb{#1}}
\newcommand{\imag}{\mathrm{i}}
\renewcommand{\t}{\top}
\newcommand{\norm}[1]{\|#1 \|}
\newcommand{\abs}[1]{|#1 |}
\newcommand{\skalar}[1]{\langle #1 \rangle}
\newcommand{\8}{\infty}
\newcommand{\eqdist}{\stackrel{\mathrm{law}}{=}}
\newtheorem{theorem}{Theorem}[section]
\newtheorem{lemma}[theorem]{Lemma}
\newtheorem{proposition}[theorem]{Proposition}
\theoremstyle{definition}
\newtheorem{remark}[theorem]{Remark}
\newcommand{\norma}[1]{\norm{#1}_{\bb \alpha}}
\newcommand{\bb}{\boldsymbol}
\newcommand{\dri}{\mathrm{dRi}}
\renewcommand{\d}{\delta}
\def\bfA{{\bb{A}}}
\def\bfB{{\bb{B}}}
\def\bfC{{\bb{C}}}
\def\bfX{{\bb{X}}}
\def\Z{{\mathbb{Z}}}
\newcommand{\wt}{\widetilde}
\renewcommand{\phi}{\varphi}
\newcommand{\Ind}{\mathbf{1}_}
\renewcommand{\L}{\Lambda}
\def\bfm{{\bb{m}}}
\numberwithin{equation}{section}
\begin{document}

\maketitle

\begin{abstract}
 We consider random vectors $X$ that satisfy the equation in law $X=AX+B$, where $A$ is a given random diagonal matrix and $B$ a given random vector, both independent of $X$. It is well known by the works of Kesten and Goldie that the marginals of $X$ may exhibit heavy tails, with possibly different tail indices.
 In recent works (Damek 2025, Mentemeier and Wintenberger 2022) it was observed that asymptotic independence may occur despite strong dependencies in the entries of $A$: The probability that both marginals are simultaneously large decays faster than the marginal probability of an extreme event; the tail measure is concentrated on the axis. In this work, we analyse the hidden regular variation properties of $X$, that is, we find the proper scaling for which one observes simultaneous extremes.
\end{abstract}

\textbf{Keywords}: Multivariate Regular Variation; Hidden Regular Variation; Stochastic Recurrence Equation;  Multidimensional Renewal Theory 

\textbf{MSC}: {60J05; 
	60G70; 
	28A33; 
	60K05 
} 

%
%
%
%
%

\section{Introduction}
On a probability space $(\Omega, \mathcal{A}, \P)$
let $\m{A}=\mathrm{diag}(A_1,A_2)$ be a $2 \times 2$ random diagonal matrix with real-valued random variables $A_1, A_2$ on the diagonal.  Let $\bb{B}=(B_1, B_2)^\top$ be a random vector in $\R^2$. Note that $(A_1,A_2,\bb{B})$ may have any dependence structure; we are particular interested in settings with strong dependence between $A_1$ and $A_2$.
Given a sequence $(\m{A}_n,\bb{B}_n)_{n \ge 1}$ of independent and identically distributed (i.i.d.) copies of $(\m{A},\bb{B})$, we define a stochastic recursion by
\begin{equation}\label{eq:recc}
 \bb{X}_{n}=\m{A}_n \bb{X}_{n-1} + \bb{B}_n, \qquad n \ge 1,\end{equation}
where $\bb{X}_0$ is an arbitrary initial value (deterministic or random), which we assume to be independent of $(\m{A}_n, \m{B}_n)_{n \ge 1}$. This model includes several classes of autoregressive time series processes, such as diagonal BEKK-ARCH or some CCC-GARCH models, see \cite{Mentemeier2022} and Section \ref{sect:Examples} below for details.

Under weak assumptions, there is a unique stationary distribution for the Markov chain $\bb{X}_n$. If $\bb{X}=(X_1,X_2)^\t$ is a vector with  this  stationary distribution that is independent of $\m{A}$ and $\bb{B}$, then $\m{X}$ satisifies the stochastic fixed point equation $\m{A} \bb{X} + \bb{B}\eqdist \bb{X}$, where $\eqdist$ denotes equality in law. Under suitable assumptions, it holds that $\lim_{t \to \infty} t^{\alpha_i}\P(X_i >t)=c_i>0$ for $i=1,2$; where $\alpha_i$ are positive constants given as the unique positive solution to the equation $\E [ A_i^{\alpha_i}]=1$; see \cite{Goldie1991,Kesten1973}.
Equivalently,
\begin{equation}\label{eq.intro.marginal.tails}
	\lim_{t \to \infty} t\P(\abs{X_i} >t^{1/\alpha_i})=c_i>0.
\end{equation}

In this work, we study the multivariate regular variation properties of $\bb X$.
It was shown in \cite{Damek2021} that -- unless $\abs{A_1}=\abs{A_2}^c$ a.s.\ for some constant $c>0$ -- {\em asymptotic independence} (cf. e.g. \cite[Section 5.6]{Resnick2024}) between the components $X_1$ and $X_2$ occurs, that is  
\begin{equation}\label{eq.intro.asymptotic.independece}
	\lim_{t \to \infty} t^{1+\delta} \P(\abs{X_1} >t^{1/\alpha_1}, \abs{X_2} > t^{1/\alpha_2})=0
\end{equation}
for some $\delta >0$. Note that the definition of asymptotic independence (\cite[Section 5.6]{Resnick2024}) is stated with $\delta=0$. Hence, a stronger property was proved in \cite{Damek2021}. 

In particular, upon conditioning on $\P(X_1>t^{1/\alpha})$, say, it follows by a combination of \eqref{eq.intro.marginal.tails} and \eqref{eq.intro.asymptotic.independece} that, conditioned on the first component $X_1$ being large, the probability that the second component is also large tends to zero; even with a rate $t^{\delta}$. The value of $\delta $, however, is not optimal in \cite{Damek2021}. In this paper we obtain the precise asymptotics in \eqref{eq.intro.asymptotic.independece}. Hence, for example in the case of heavy tailed financial assets described by \eqref{eq:recc}, our results give an exact probability of both of them being large.

It is useful to express asymptotic independence  \eqref{eq.intro.asymptotic.independece}  as a multivariate regular variation property of $\bb X$ (cf.  \cite[Theorem 2.17]{Damek2021},\cite[Proposition 5.1]{Resnick2024}). Write $\bfC(\R^2\setminus F)$ for the set of bounded continuous functions $f:\R^2 \setminus F \to \R$ the support of which is bounded away from the {\em forbidden zone} $F$. It holds that there is a {\em limit measure} $\Lambda$ such that for any $f \in \bfC(\R^2 \setminus\{\bf 0\})$
$$ \lim_{t \to \infty} t \E \big[ f\big(t^{-1/\alpha_1}X_1, t^{-1/\alpha_2}X_2\big) \big] = \int_{\R^2 \setminus \{\bb 0\}} f(x,y) \Lambda(dx,dy).$$
and the support of $\Lambda$ is restricted to $[\mathsf{axes}]:=\big(\R \times \{0\}\big) \cup \big(\{0\} \times \R \big)$. It is hence an open question, what are the asymptotics for functions $f \in \bfC\big(\R^2 \setminus [\mathsf{axes}]\big) $. Following \cite[Section 3.1]{Resnick2024}, (the distribution of) a random vector $\bb X$ exhibits hidden regular variation if there is another limit measure $\Lambda_2$ such that when enlarging the forbidden zone (here: from $\{\bb 0\}$ to $[\mathsf{axes}]$), multivariate regular variation holds with a different and slower rate. This is exactly what we are going to prove for our model:
Assuming in addition some strong nonlattice condition (satisfied e.g. if the law of $\bb A$ is absolutely continuous) and some conditions on the mixed moments of $A_i, B_i$, we will prove (see Theorem \ref{thm:main}) that there is $\eta>0$ and a limit measure $\Lambda_2$ such that for all $f \in \bfC(\R^2 \setminus [\mathsf{axes}])$
\begin{equation}\label{eq:result:intro}
	 \lim_{t \to \infty} (\log t)^{1/2}\, t^{1+\eta} \E \big[ f\big(t^{-1/\alpha_1}X_1, t^{-1/\alpha_2}X_2\big) \big] = \int_{\R^2 \setminus [\mathsf{axes}]} f(x,y) \Lambda_2(dx,dy).
\end{equation}
The value of  $\eta$ is determined as follows. Considering 
$$ \phi(\xi_1, \xi_2) :=\E \Big[ |A_1|^{\xi_1\alpha_1} |A_2|^{\xi_2 \alpha_2} \Big],$$
we require the existence of $\bb \xi^* = (\xi_1^*, \xi_2^*)$ such that $\bb \xi^* \in (0,1)^2$, $\phi(\bb \xi^*)=1$ and such that the gradient $\nabla \phi(\bb \xi^*)$ is parallel to $(1,1)$. Then such $\bb \xi^*$ is unique, and $1+\eta:=\xi_1^*+\xi_2^*$.

The hidden regular variation property \eqref{eq:result:intro} extends to stationary solutions of Eq. \eqref{eq:recc} with matrices and vectors of arbitrary dimension provided the diagonal matrix $\bfA$ has a special structure. This is contained in the main theorem \ref{thm:main:blocks}. We think however that describing first the two dimensional case is beneficial for the reader.

\subsubsection*{Related Works}

A recent textbook dedicated to the topic of hidden regular variation is \cite{Resnick2024}, see also \cite{Basrak2025,Das2013,Resnick2003} and references therein. Analysing hidden regular variation in various models is an active branch of current research, e.g. in the fields of L\'evy processes \cite{Lindskog2014}, stable random vectors \cite{Forsstrom2020}, point processes \cite{Dombry2022} and branching processes \cite{Blanchet2025}.

Asymptotic independence for stochastic recursions with diagonal matrices was considered first in \cite{Mentemeier2022} for particular cases of $\bb A$, and in a general setting in \cite{Damek2021}. For general information about multivariate stochastic recursions we refer to Chapter 4 of the book \cite{Buraczewski2016} as well as to the fundamental work \cite{Kesten1973}  and its extensions e.g. in \cite{Alsmeyer2012,Buraczewski2009a, Guivarch2016,Kluppelberg2004}. 
In the cited works the structure of the matrix $\bb A$ is always such that it acts in an irreducible way implying multivariate regular variation with a limit measure that charges the full state space; hence hidden regular variation cannot occur.

\subsection{Structure of the Paper}

Having given an adhoc version of our main result in the introduction, we proceed by stating the precise formulation of our assumptions in Section \ref{sect:assumptions.notations}. There, we also collect for the reader's convenience notation and symbols used throughout the work. In particular, notation needed to formulate the multidimensional theorem is introduced in Subsection 2.2.3.
We state our main results in Section \ref{sect:main result}. It is possible to consult only  Subsection \ref{subsect:assumptions} containing the assumptions and then proceed directly to the statement of the main result, the essential notions have been discussed in the introduction. In Section \ref{sect:Examples}, we apply our results to analyze hidden regular variation properties of two nonlinear multivariate time series models, namely CCC-GARCH and BEKK-ARCH. For further illustration, we consider the case where $A_1, A_2$ have a log-normal distribution. Section \ref{sect.phi} is devoted to properties of the function $\phi$. In Section \ref{sect:moments}, we provide estimates for mixed moments $\E \big[ |X_1|^{\xi_1 \alpha_1} |X_2|^{\xi_2 \alpha_2}\big]$ that are used later on. The convergence to a limit measure $\Lambda_2$ is proved in Section \ref{sect:implicit.renewal}, leaving open the possibility that $\Lambda_2$ could be degenerate (with total mass 0). This is resolved in Section \ref{sect:positivity} where we prove that $\Lambda_2$ is nontrivial. 

Our main tools are two-dimensional Edgeworth expansions  and a  renewal theorem on $\R^2 \times K$, where $K$ is a finite group. We provide such a renewal theorem in Section \ref{sect:2dRenewal.with.K}. In the appendix, we provide for the reader's convenience some frequently used results for two-dimensional random walks, adopted to the present notation.

\section{Assumptions, Notation and Preliminaries} \label{sect:assumptions.notations}

\subsection{Assumptions}\label{subsect:assumptions}

Let $d \ge 1$ be an integer. In larger parts of the paper, we will study the two-dimensional case $d=2$, but the general result will be stated for a setup where $d >2$ may occur.
As already stated in the Introduction, we assume $A_i$ and $B_i$, $1 \le i \le d$, to be real-valued random variables, satisfying the following additional assumptions.
\begin{equation}\label{assump:alpha} \tag{A1}
	\text{For $1 \le i \le d$ there exists } \alpha_i>0 \text{ such that } \E[ \abs{A_i}^{\alpha_i}] =1; \text{ and } \P(\abs{A_i}=1)<1,\, \P(\abs{A_i}=0)=0.\ \end{equation}

Note that the mapping $s \mapsto \E [\abs{A_i}^s]$ is convex, which together with assumption \eqref{assump:alpha} readily implies that  for $1 \le i \le d$, 
\begin{equation}\label{eq:logAi}
	\E[ \log \abs{A_i}] <0.
\end{equation}
Further, with the values of $\alpha_i$, $1 \le i \le d$ given by \eqref{assump:alpha}, assume that
\begin{equation}\label{assump:moments} \tag{A2}
		\mbox{for}\ 1 \le i \le d\ \mbox{it holds } \ 0< \E [\abs{B_i}^{\alpha_i}] < \infty,\ \E[ \abs{A_i}^{\alpha_i}\log ^+|A_i|] < \infty \end{equation} 
Under assumptions \eqref{assump:alpha} and \eqref{assump:moments} it holds that 
the marginal equations $A_i X_i + B_i \eqdist X_i$ have a unique solution (in distribution), that is given by the law of the then almost surely convergent series
\begin{equation}\label{eq:perpetuity}
	X_i =\sum_{k=0}^\infty A_{1,i} \cdots A_{k-1,i} B_{k,i},
\end{equation}
where $(A_{k,i}, B_{k,i})_k$ are sequences of i.i.d. copies of $(A_i, B_i)$ for $1 \le i \le d$, that are also independent of $(A_i,B_i)$.
Imposing assumptions \eqref{assump:alpha} and \eqref{assump:moments}, it also follows from the convexity of the mapping $s \mapsto \E [\abs{A_i}^s]$ that
\begin{equation}\label{eq:AilogAi}
	\E \big[\abs{A_i}^{\alpha_i} \log \abs{A_i}\big]>0.
\end{equation}
Observe that \eqref{assump:alpha} allows to introduce for $1 \le i \le d$ an exponential change of measure (Esscher transform) for $\log |A_i|$ (with parameter $\alpha_i$). Eq. \eqref{eq:AilogAi} then states that $\log |A_i|$ has positive drift under the exponential change of measure with parameter $\alpha_i$.

To avoid degeneracy of $X_i$, we require 
\begin{equation} \label{assump:nondeg} \tag{A3}
	\text{For $1 \le i \le d$, } \P(A_ix+ B_i= x)<1 \quad \text{ for all } x \in \R.\end{equation}

 Due to a special structure of $\bfA$ (explained in subsection 2.2.3), as far as the entries of $\bfA$ are considered, we may restrict our attention to the bivariate case, for which we introduce some further conditions now.
For a probability measure $Q$ on $\R^2$, denote its characteristic function by $\hat{Q}(t_1, t_2)=\int e^{i (t_1 x_1 + t_2x_2)} \, Q(d(x_1, x_2))$. We say that $Q$ is {\em nonarithmetic}, if $\{ (t_1, t_2) \, : \, \hat{Q}(t_1, t_2)=1\}=\{(0,0)\}$.
We will require that
\begin{equation}\label{assump:nonarithmetic}\tag{A4a} 
	\text{The law of $(\log |A_1|, \log |A_2|)$ is nonarithmetic.}
\end{equation}
 In particular, its support is not contained in a line.
%
%

Under assumptions \eqref{assump:alpha}, \eqref{assump:moments}, \eqref{assump:nondeg} and\eqref{assump:nonarithmetic} the Kesten-Goldie theorem \cite{Goldie1991,Kesten1973} gives that there are $c_1,c_2>0$ such that for $i \in \{1,2\}$,
\begin{equation}\label{eq:tailsMarginal}
	\lim_{t \to \infty} t \P(X_i > t^{1/\alpha_i}) = c_i.
\end{equation}
	Note that  \eqref{assump:nonarithmetic} implies the following property:
	\begin{equation}\label{assump:notapower} \tag{A4b}
		\P(\abs{A_1}=\abs{A_2}^c)<1 \text{ for all } c>0\end{equation}
	which was recognized in \cite{Damek2021} to be necessary and sufficient for asymptotic independence between $X_1$ and $X_2$ (given that \eqref{assump:alpha}, \eqref{assump:moments} and \eqref{assump:nondeg} are satisfied). Observe that \eqref{assump:alpha} forbids $c\leq 0$.

We also need an assumption on mixed moments: With $\alpha_1, \alpha_2$ given by \eqref{assump:moments}, we require
\begin{equation}\label{assump:mixedmoments}\tag{A5}
	\E \big[ |B_1|^{\alpha_1}|B_2|^{\alpha_2}\big]+\E \big[ |B_1|^{\alpha _1}\ |A_2|^{\alpha_2}\big] + 
	\E\big[ |A_1|^{\alpha_1} |B_2|^{\alpha _2}\big]  + \E \big[ |A_1|^{\alpha_1} |A_2|^{\alpha_2}\big]< \infty
\end{equation}

Finally, we will in some places assume a condition that is complementary to \eqref{eq:AilogAi} in the sense that $\log |A_2|$ has positive drift as well under the exponential change of measure related to $\log |A_1|$, and vice versa:
\begin{equation}\label{assump:positivedependence} \tag{A6}
\E [\abs{A_1}^{\alpha_1} \log \abs{A_2} ]>0, \quad \E [\abs{A_2}^{\alpha_2} \log \abs{A_1}] >0 \end{equation} 

\subsection{Notation}
\subsubsection{Sets, Norm-like Functions, Function Spaces}

For vectors and matrices, we use bold notation ${\bb x}=(x_1, \dots, x_d)$. Whether $\bb x$ is a column or row vector will be clear from the context. The Euclidean scalar product between vectors $\bb x, \bb y$ is denoted by $\skalar{\bb x, \bb y}=\sum_{i=1}^d x_i y_i$. Without further subscript, $\norm{\bb x}=\sqrt{\skalar{\bb x, \bb x}}$ denotes the Euclidean norm. The nonnegative orthant in $\R^d$ is denoted by
$$ \R^d_+:=\{ \bb x \in \R^d \, : \, x_i \ge 0 \text{ for all } 1 \le i \le d\}$$

For any $x \in \R$, we write $x^+:=\max\{0,x\}$.

Operations between vector and scalar or vector and vector are interpreted coordinatewise, in particular, we write (for $t>0$)
\begin{equation}\label{dilat}
	\bb t^{1/\bb \a}\bb x =(t_1^{1\slash \a _1}x_1,...,t_d^{1\slash \a _d}x_d ).
\end{equation}
Also, $\bb x \le \bb y$ is to interpreted as $x_i \le y_i$ for all $1 \le i \le d$.
For the vector $\bb \alpha=(\alpha_1, \dots, \alpha_d)$ with positive entries given by \eqref{assump:alpha}, we define the norm-like function $\norma{\cdot}$ by
$$ \norma{\bb x}:=\max_{1 \le i \le d} |x_i|^{\alpha_i}$$
both for $d$-vectors $\bb x$ as well as $d \times d$-diagonal matrices. Note that $\norma{\cdot}$ is in general not a norm: it is neither homogeneous nor subadditive, but there is $c_{\bb \alpha} \in (0,\infty)$ such that
\begin{equation}\label{eq:norm:subadditiv}
	 \norma{\bb x + \bb y} \le c_{\bb \alpha}\big( \norma{\bb x} + \norma{\bb y}\big).
\end{equation}Observe that for any $t\ge 0$,we have the ($\bb \alpha$-)homogenity property
\begin{equation}\label{eq:homogenity}
\norma{t^{1/\bb \alpha} \bb x} = t\norma{\bb x}.
\end{equation}
We will consider spheres with respect to the norm-like function,
\begin{align*}
	S_r^{d-1}=\{ \bb x\in \R ^d: \norma{\bb x}=r\}, &\quad S^{d-1}:=S_1^{d-1},
\end{align*}
and 
introduce polar coordinates related to  $\norma{\cdot}$ using the map $\Theta : \R ^+\times S^{d-1}\to \R ^d\setminus \{ 0\}$,
\begin{equation}\label{polar} 
	\Theta (s,\omega )=s^{1/\bb \alpha} \omega \quad s>0,\ \omega \in S^{d-1} 
\end{equation} 
which is a homeomorphism (see e.g. \cite[(2.15)]{Damek2021} for a proof).

For a Borel subset $E \in \R^d$, $\mathcal{C}(E)$ denotes the space of bounded continuous functions on $E$, and $\mathcal{C}_c(E)$ the space of continuous functions on $E$ with compact support, both equipped with the norm
$\| f\| _{\8} = \sup _{ x\in E}|f(x)|$. For a function $f$ on $E$, its support is defined as the closure of $\{\bb x \in E \, : \, f(\bb x) \neq 0\}$. For a closed subset $F \subset E$, we denote by $\bfC (E\setminus F)$ the space of {\it bounded} continuous functions on $E$ whose support is bounded  away from the set $F$. 
Denote by $\bb e_i$, $1 \le i \le d$, the standard basis of $\R^d$.

The following notation will be used only for $d=2$.
	We write
	$[\mathsf{axes}]=(\R \bb e_1) \cup (\R \bb e_2),$
	for the set of (coordinate) axes. The 1-spheres without $[\mathsf{axes}]$  are denoted by
	$$ \widetilde{S}_r:=S_r^{1} \cap (\R^2 \setminus [\mathsf{axes}]) =S_r^{1} \setminus \{r \bb e_1,-r \bb e_1,r \bb e_2,-r \bb e_2\}.$$
	We will consider in particular
\begin{equation*}
	\bfC(\R^2 \setminus [\mathsf{axes}]):= \big\{ f \in \mathcal{C}(\R^2 \setminus [\mathsf{axes}]) \, : \, \supp f\subset \{\bb x \, :\, |x_i|>\eta_i \text{ for }  1 \le i \le 2\} \text{ for some } \eta_1, \eta_2>0 \big\}.
\end{equation*}
Let $0<\epsilon <1$ be such that $\max_i \epsilon \alpha_i <1$ and $U$ be a Borel subset of $\R^d$. We  say that a function $f: U \to \R$ is $\epsilon$-$\bb \alpha$-H\"older if there is a constant $C_f$ such that
	\begin{equation*}
		|f(\bb x)-f(\bb y)|\leq C_f\norma{\bb x-\bb y}^{\epsilon}
	\end{equation*}
	for every $\bb x,\bb y\in U $. Note that a bounded function $f$ is $\epsilon$-$\bb \alpha$-H\"older if it is H\"older continuous in the classical sense with respect to a norm, with a possibly different exponent. 
	Let
	$$ \bb H^\epsilon(\R^2 \setminus [\mathsf{axes}]):= \Big\{ f \in \bfC(\R^2 \setminus [\mathsf{axes}]) \, : \, f \text{ is } \epsilon\text{-}\bb \alpha\text{-H\"older } \Big\}$$
 As usual, $L^1(\R^2)$ denotes the space of measurable functions $f: \R^2\to \R$ that are integrable with respect to the Lebesgue measure $\lambda^2$ on $\R^2$. The interior of a Borel set $U$ is denoted by $\mathrm{int}(U)$.

\subsubsection{Random Variables, Moments and a Change of Measure}

With an slight abuse of notation, we will also speak of the support of a measure $\mu$ or a random variable $Z$, defined as follows. For a measure $\mu$ on $\R^d$, $$\supp(\mu):= \{ \bb x \, :\, \mu(U)>0 \text{ for all open neighborhoods } U \text{ of } \bb x \},$$
and $\supp(Z)$ is defined as the support of the law of $Z$.

Under assumptions \eqref{assump:alpha} and \eqref{assump:moments}, the series
 $	\bfX := \sum_{k=1}^\infty \bfA_1 \cdots \bfA_{k-1} \bfB_k$
converges almost surely, and it holds that $\bfX \eqdist \bfA \bfX + \bfB$, stipulating that $\bfX$ and $(\bfA, \bfB)$ are independent.
We may split the series for $\bb X$ at any $n \in \N$ and denote
$$\bfX_{>n} :=\sum _{k=1}^{\8 }\bfA_{n+1}\cdots \bfA _{n+k-1}\bfB _{n+k} \quad \text{and} \quad \bb X_{\le n} :=\sum _{k=1}^{n }\bfA_1\cdots \bfA _{k-1}\bfB _k.$$
It then holds that $\bb X_{>n} \eqdist \bb X$ and $\bb X = \bb X_{\le n} + \bfA_1\cdots \bfA_n \bb X_{>n}$, and $\bb X_{\le n}$ is independent of $\bb X_{>n}$.

With the values of $\alpha_1$, $\alpha_2$ determined by \eqref{assump:alpha}, we introduce the random vector $\bb U$ and the random $2\times 2$-diagonal matrix $\bb K$ by
$$ \bb U:=(U_1,U_2)=(\alpha_1 \log |A_1|, \alpha_2 \log |A_2|), \qquad  \bb K = \mathrm{diag}(\mathrm{sign}(A_1), \mathrm{sign}(A_2)).$$
We may hence decompose 
	 $\bb A =\mathrm{diag}(A_1, A_2) = \mathrm{diag} (e^{U_1/\alpha_1}, e^{U_2/\alpha_2}) \bb K =: e^{\bb U / \bb \alpha} \bb K$.
We write $K$ for the smallest group generated by the support of $\bb K$, which can easily identified with a subgroup of $\Z_2 \times \Z_2$.
We denote by $\wt{\mu}$ the joint law of $(\bb U, \bb K)$ and by $\mu$ the law of $\bb U$.
Let $(\bb U_i, \bb K_i)_{i \ge 1}$ be a sequence of i.i.d.\ copies of $(\bb U, \bb K)$ and set
$ \bb S_n:= \sum_{i=1}^n \bb U_i$, $\bb L_n:= \prod_{i=1}^n \bb K_i$.
Note that $\bb S_n$ has negative drift (in both components) due to \eqref{eq:logAi}.
The renewal measure of $\bb S_n$ and $(\bb S_n, \bb K_n)$ are given, respectively,  by
$$ \mathds{U}(C):=  \sum_{n=0}^\infty \P(\bb S_n \in C) = \sum_{n=0}^\infty \mu^{*n}(C), \quad  \wt{\mathds{U}}(C \times D):=  \sum_{n=0}^\infty \P(\bb S_n \in C, \bb L_n \in D) = \sum_{n=0}^\infty \wt \mu^{*n}(C\times D) $$
for any Borel subset $C$ of $\R^2$ and $D \subset K$. 
Here and below, $*$ denotes convolution of measures.
A central concept in renewal theory are directly Riemann integrable functions, which are defined in our context as follows.
	Denote $F_{\bb 0}:=[0,1]^2$ and for $\bfm =(m_1,m_2)\in \Z ^2$, let $F_{\bfm}:=\bb m + F_{\bb 0}$, hence $F_{\bb m}=\{ \bb s\in \R^2: m _j\leq s_j\leq m_{j}+1, j=1,2\}$.
	A continuous function $g:\R^2 \times K$ is said to be directly Riemann integrable, if $\norm{g}_{\dri}<\infty$, where
$$\| g\| _{\dri}= \sum _{k\in K}\sum _{\bfm}\sup _{\mathbf{s}\in F_{\bfm} }(1+\| \mathbf{s}\| )|g (\mathbf{s},k)|.$$
Consider the moment generating function $$\phi(\bb \xi):= \E[ \exp(\skalar{\bb \xi, \bb U}) ]= \E[ |A_1|^{\xi_1 \alpha_1} |A_2|^{\xi_2 \alpha_2}]$$ with domain
$ I:= \{  \bb \xi \in \R^2 \, : \, \E[ \exp \big( \skalar{\bb \xi, \bb U}] \big) < \infty\}. $ Note that $\phi$ is convex on $I$ and analytic on  $\interior{I}$.
 We further denote 
$$\Delta :=\{ \bb \xi \in I \cap \R_+^2\, : \, \phi(\bb \xi)\le1\}, \qquad D :=\{ \bb \xi \in I \cap \R_+^2  \, : \, \phi(\bb \xi) =1\},$$ 
 where $\R_+=[0,\8 )$. 

For each $\bb \xi \in I,$ we can define a finite measure $\mu_{\bb \xi}$ by setting $\mu_{\bb \xi}(d\bb u):= e^{\skalar{\bb \xi ,\bb u}} \mu(d\bb u)$ as well as $\wt{\mu}_{\bb \xi}(d\bb u, k):= e^{\skalar{\bb \xi ,\bb u}} \wt{\mu}(d\bb u,k)$. Note that $\mu_{\bb \xi}$ is a subprobability measure whenever $\phi(\bb \xi)<1$. If $\bb \xi \in D$, then $\mu_{\bb \xi}$ is a probability measure. For $\bb \xi \in D$, we denote by $\P_{\bb \xi}$ the probability measure on $(\Omega, \mathcal{A})$ under which $\bb U_i$ are i.i.d. with law $\mu_{\bb \xi}$, hence
$$ E_{\bb \xi} [f(\bb S_1, \dots, \bb S_n)]:= \E \big[ e^{\skalar{\bb \xi,\bb S_n}} f(\bb S_1, \dots, \bb S_n) \big]$$
for all $n \in \N$. Denote by $\mathds{U}_{\bb \xi}$ and $\wt{\mathds{U}}_{\bb \xi}$ the renewal measures associated with $\mu_{\bb \xi}$ and $\wt{\mu}_{\bb \xi}$, respectively. We write
$$ \bb m_{\bb \xi} := \E_{\bb \xi} [\bb U] = \E \big[ e^{\skalar{\bb \xi, \bb U}} \bb U \big].$$
Note that for  $\bb \xi \in D \cap \interior{I}$, $\bb m_{\bb \xi}=\nabla \phi(\bb \xi)$ and $\E_{\bb \xi} \norm{U}^m < \infty$ for all $m \in \N$.

	Our results will inter alia rely on asymptotic expansions for which we require a stronger nonlattice assumption, also known as {\em Cram\'er's condition}. Denoting by
	$$ \hat{\mu}_{\bb \xi}(t_1, t_2):= \E_{\bb \xi} \big[\exp(\imag (t_1 U_1 + t_2 U_2))] = \E \big[ \exp(\skalar{\bb \xi + \imag \bb t, \bb U}) \big]$$ the characteristic function of $\mu_{\bb \xi}$, we will assume that
	\begin{equation}\label{assump:cramer}\tag{A4c}
		\limsup_{(|t_1|+|t_2|) \to \8} \big| \hat{\mu}_{\bb \xi}(t_1, t_2)\big| <1,
	\end{equation}
	where the value of $\bb \xi$ will be fixed in the main theorem.
	Note that \eqref{assump:cramer} implies \eqref{assump:nonarithmetic} both for $\mu$ and $\mu_{\bb \xi}$. 	
	Condition \eqref{assump:cramer} is satisfied, for example, if $\mu_{\bb \xi}$ (or equivalently, $\mu$)  has a nonzero, absolutely continuous component with respect to Lebesgue measure $\lambda^2$ on $\R^2$. See e.g. \cite[Chapter 20]{Bhattacharya2010} for further information.

\subsubsection{Notation and Preliminaries for Blocks}\label{subsect:blocks}

In a more general framework, we can consider recursions on $\R^d$ with diagonal matrices $\bb A =(A_1, \dots, A_d)$, such that the state space $\R^d$ can be seperated into two {\em blocks}, as follows. 
For $i,j \in \{1,\dots, d\}$, we define an equivalence relation $\sim$ by 
\begin{equation}\label{eq:equivalence.relation} 
	i \sim j \quad :\Leftrightarrow \quad |A_i|^{\alpha_i}= |A_j|^{\alpha_j} \text{ a.s.}
\end{equation}
We restrict our analysis to the case when there are two equivalence classes $J_1$, $J_2$ of sizes $d_1$ and $d_2$, respectively. To avoid notational ambiguity, it is convenient to assume without loss of generality that
\begin{equation}\label{eq:equivalence.classes} 
	J_1=\{1, 3,4, \dots, d_1+1\} \text{ and } J_2=\{2, d_1+2, d_1+3, \dots, d_1+d_2\}.
\end{equation}
This gives in particular that \eqref{assump:notapower} is satisfied; and we may consider $|A_1|$ and $|A_2|$ as representatives (of the entries of $\bb A$) for the two equivalence classes. As a consequence, we may continue to consider assumptions formulated in terms of $A_1$ and $A_2$.

According to the equivalence classes $J_1$, $J_2$, we decompose $\bb A= \mathrm{diag}(\bb A^{(1)}, \bb A^{(2)})$, $\bb B=(\bb B^{(1)}, \bb B^{(2)})$ and $\bb X = (\bb X^{(1)}, \bb X^{(2)})$ with $\bb A^{(1)}=\mathrm{diag}(A_1, A_3, \dots, A_{d_1+1})$ and $\bb A^{(2)}=\mathrm{diag}(A_2, A_{d_1+2}, \dots, A_{d_1+d_2})$. The random vectors $\bb B^{(i)}$ and $\bb X^{(i)}$ are defined in the same way according to the equivalence classes.
It then holds for $i \in \{1,2\}$ that
\begin{equation}\label{eq:SRE.blocks}
	 \bb X^{(i)} \eqdist \bb A^{(i)} \bb X^{(i)} + \bb B^{(i)}.
\end{equation}
 We may restrict $\norma{\cdot}$ to $R^{d_1}$ or $\R^{d_2}$ in a natural way by setting for ${\bb x}=(\bb x^{(1)}, \bb x^{(2)})$ 
$$ \norma{\bb x^{(1)}}=\max_{i \in \{1, 3, \dots, d_1+1\}} |x_i|^{\alpha_i}, \qquad \norma{\bb x^{(2)}}=\max_{i \in \{2, d_1+2, \dots, d_2\}} |x_i|^{\alpha_i}.$$
 
As before, we identify $\bfA$ with $(\mathbf{U},\bb K)$, where $\bb U=(\alpha_1 \log |A_1|, \alpha_2 \log|A_2|)$ and $\bb K=\mathrm{diag} (k_1,...,k_{d_1+d_2})$ with $k_i=\mathrm{sign} A_i$.  Note that $K$, the group generated by $\supp(\bb K)$, is now a subgroup of $(\Z_2)^{d_1+d_2}$.
 It holds for any $\bb x \in \R^{d_1+d_2}$,
\begin{equation}\label{eq:decomp}
	\bfA \bb x = \left (e^{U_1\slash \bb \a^{(1)}}(\bb K\mathbf{x})^{(1)}, e^{U_2\slash \bb \a^{(2)}}(\bb K\mathbf{x})^{(2)} \right)
	=:e^{\mathbf{U}\slash \bb \a} \bb K \bb x.
\end{equation}

Observe that thus for $j=1,2$, 
$	\norma{\bfA^{(j)}\bfX^{(j)}}=e^{U_j}\norma{\bfX^{(j)}}=|A_j|^{\alpha_j}\norma{\bfX^{(j)}}$.
Similar as before, let $\wt \mu $ be the law of $(\mathbf{U},\bb K)$ on $\R ^2 \times K$. Writing
$$ [\mathsf{blocks}]:=\big\{ \bb x \in \R^d : x_i=0 \text{ for all } i \in J_1\big\} \cup \big\{ \bb x \in \R^d : x_i=0 \text{ for all } i \in J_2\big\},$$
it has been proved in \cite[Theorem 3.1]{Damek2021} that there exists a nonzero Radon measure $\Lambda$ such that for all $f \in \bfC(\R^d \setminus \{ \bb 0\})$,
$$ \lim_{t \to \infty} t \E \big[ f\big(t^{-1/\bb \alpha} \bb X\big) \big] = \int_{\R^d \setminus \{\bb 0\}} f(\bb x) \Lambda(\bb dx),$$
and the measure $\Lambda$ is supported on $[\mathsf{blocks}]$. That is, between blocks, we have asymptotic independence. 
We will hence consider
\begin{equation*}
	 \bfC \left (\R^d\setminus [\mathsf{blocks}]\right ):= \left \{ f\in \bfC (\R ^d \setminus [\mathsf{blocks}]):  \supp f\subset \{ \mathbf{x}: \|\mathbf{x}^{(i)}\| _{\a} >\eta _i, i=1,2\ \mbox{for some}\ \eta _i>0\} \right \}
\end{equation*}
as well as
$$ \bb H^\epsilon(\R^d \setminus [\mathsf{blocks}]):= \Big\{ f \in \bfC(\R^d \setminus [\mathsf{blocks}]) \, : \, f \text{ is } \epsilon\text{-}\bb \alpha\text{-H\"older } \Big\}.$$
Upon defining
\begin{equation}\label{eq:psimoments}
	\psi (\bb \xi):=\E\big[ |A_1|^{\xi_1\alpha _1} \norma{\bfB ^{(2)}}^{\xi_2}\big] +\E \big[ \norma{\bfB ^{(1)}}^{\xi_1}\ |A_2|^{\xi_2\alpha _2}\big] + \E \big[ \norma{\bfB ^{(1)}}^{\xi_1}\norma{\bfB ^{(2)}}^{\xi_2}\big]
	+ \phi(\bb \xi)
\end{equation}
we will require the following replacement for Assumption \eqref{assump:mixedmoments}:
\begin{equation}\label{eq:mixedblocks} \tag{A5b}
\psi(1,1)<\infty	
\end{equation}
Finally, we extend the notion of spheres without axes to spheres without blocks:
\begin{equation*}
	\wt S_r : = S_r^{d-1} \cap \left (\R ^d\setminus [\mathsf{blocks}]\right ).
\end{equation*}
The meaning will be clear from the context.

\section{Main Result}\label{sect:main result}

We start by providing more information about the  moment generating function 
$$\phi(\bb \xi)= \E \big[\exp(\skalar{\bb \xi, \bb U})\big] = \E \big[ |A_1|^{\alpha_1 \xi_1} |A_2|^{\alpha_2 \xi_2} \big].$$ 
Let $h(\bb \xi)=\xi _1+\xi _2$ and 
$$m:=\max \{ h(\bb \xi) \, : \, \bb \xi \in \Delta \cap [0,1]^2\}.$$
Our analysis will require that there is $\bb \xi^* \in D \cap (0,1)^2$ such that $h(\bb \xi^*)=m$.
The following lemma shows that such $\bb \xi^*$ is unique (given its existence); and provides simple sufficient conditions for the existence of $\bb \xi^* \in D \cap (0,1)^2$. Its proof will be given in Section \ref{sect.phi}.

\begin{lemma}\label{lem:properties.phi2}
	Assume \eqref{assump:alpha} and \eqref{assump:notapower}. 
	\begin{enumerate}
		\item It holds that $\phi(\bb \xi)<1$ on $\{ \bb \xi \in (0,1)^2 \, : \, \xi_1+\xi_2 \le 1 \} \cup \Big( (0,1) \times \{0\} \Big) \cup \Big( \{0\} \times (0,1) \Big).$
		\item If there is $\bb \xi^* \in \interior{D \cap I}$ with $h(\bb \xi^*)=m$, then $\nabla \phi(\bb \xi^*)$ is parallel to (1,1) and $\bb \xi^*$ is unique.
		\item Assume in addition \eqref{assump:moments}, \eqref{assump:positivedependence} and $\phi(1,1)<\infty$. Then $[0,1]^2 \subset I$, it holds that $\Delta \subset [0,1]^2$ and $D$ is a connected path from $(1,0)$ to $(0,1)$ in $[0,1]^2$ with the property that $1 < \xi_1 + \xi_2 < 2$ for all $\bb \xi \in D \cap (0,1)^2$. There is a unique $\bb \xi^* \in D \cap (0,1)^2$ with the property $\nabla \phi(\bb \xi^*)$ is proportional to $(1,1)$. This $\bb \xi^*$ satisfies $h(\bb \xi^*)=m$.
	\end{enumerate}
\end{lemma}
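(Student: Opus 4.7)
The plan is to exploit convexity and analyticity of $\phi$ on $\mathrm{int}(I)$, the boundary values $\phi(0,0)=\phi(1,0)=\phi(0,1)=1$ provided by \eqref{assump:alpha}, and the strict convexity of $\phi$ that \eqref{assump:notapower} will supply in the relevant directions (any affine dependence of the form $a\log|A_1|+b\log|A_2|=c$ a.s.\ that would flatten $\phi$ across $D$ in positive directions can be ruled out by combining $\E\log|A_i|<0$, $\E|A_i|^{\alpha_i}=1$ and \eqref{assump:notapower}).

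For (1), I would write $\bb\xi\in(0,1)^2$ with $\xi_1+\xi_2\le 1$ as the nontrivial convex combination $\xi_1(1,0)+\xi_2(0,1)+(1-\xi_1-\xi_2)(0,0)$ of points where $\phi=1$; convexity yields $\phi(\bb\xi)\le 1$, with strict inequality because the restriction of $\phi$ to either axial edge of the triangle is the moment generating function of the nondegenerate random variable $\alpha_i\log|A_i|$ and hence strictly convex. The pure axis cases reduce to the same one-variable strict convexity.

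For (2), at the interior-of-$I$ maximizer $\bb\xi^*$ of the linear functional $h$ on $\Delta$ the only active constraint is $\phi=1$, so Lagrange multipliers give $\nabla\phi(\bb\xi^*)\parallel(1,1)$. For uniqueness, if two distinct points $\bb\xi^1,\bb\xi^2\in D$ realized the maximum $m$, strict convexity would give $\phi(\bar{\bb\xi})<1$ at the midpoint; then by continuity $\bar{\bb\xi}+\epsilon(1,1)\in\Delta$ for small $\epsilon>0$, producing $h=m+2\epsilon>m$ -- a contradiction.

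For (3), $[0,1]^2\subset I$ follows from convexity of $\phi$ together with finiteness at the four corners (three equal $1$ and the fourth is finite by hypothesis). To prove $\Delta\subset[0,1]^2$, I compute $\partial_2\phi(1,0)=\alpha_2\E[|A_1|^{\alpha_1}\log|A_2|]>0$ using \eqref{assump:positivedependence}; combined with convexity and $\phi(1,0)=1$ this yields $\phi(1,\xi_2)>1$ for all $\xi_2>0$, so $\Delta\cap(\{1\}\times\R_+)=\{(1,0)\}$. Convexity of $\Delta$ then forces $\Delta\subset\{\xi_1\le 1\}$ — any point of $\Delta$ with $\xi_1>1$ would, when joined to $(0,1)\in\Delta$, produce an intermediate point on $\xi_1=1$ with $\xi_2>0$, contradicting the previous step — and symmetrically $\Delta\subset[0,1]^2$. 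The boundary of the compact convex set $\Delta$ in $\R_+^2$ then splits into the two axial segments and a convex arc $D$ from $(1,0)$ to $(0,1)$ inside $[0,1]^2$; Part~(1) gives $\xi_1+\xi_2>1$ on $D\cap(0,1)^2$, while $\xi_1+\xi_2<2$ is automatic. Continuity of $h$ on the compact curve $D$, with $h=1$ at its endpoints, forces the maximum $m$ to be attained at an interior point of $D\cap(0,1)^2\subset\mathrm{int}(I)$, to which Part~(2) applies to yield uniqueness and the gradient condition. I expect the main obstacle to be this structural step $\Delta\subset[0,1]^2$: \eqref{assump:positivedependence} has to be used exactly right to exclude the level curve escaping past the lines $\xi_i=1$, while the remaining claims follow from a fairly routine interplay of convexity and the Lagrange principle.
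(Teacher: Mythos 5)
Your proposal is correct in outline but takes a genuinely different route from the paper's argument at its key technical junctures. Where the paper invokes H\"oglund's lemmas (\cite[Lemma 1.1]{Hoeglund1988} for $\nabla\phi\neq\bb 0$ on $D\cap\interior{I}$, and \cite[Lemma 1.3]{Hoeglund1988} for injectivity of $\bb\xi\mapsto\nabla\phi(\bb\xi)/\norm{\nabla\phi(\bb\xi)}$ to get uniqueness in Part 2), you substitute strict convexity of $\phi$, obtaining uniqueness from the midpoint argument. Similarly, in Part 3 the paper argues directly on $\phi$ (Taylor expansion of $\phi(1,\cdot)$ plus convexity to locate the arc, then the implicit function theorem for connectedness), whereas you first show the sublevel set $\Delta$ is compact and convex, deduce $\Delta\subset[0,1]^2$ from the tangency at $(1,0)$ and $(0,1)$, and then read off the structure of $D$ as the non-axial part of the Jordan curve $\partial\Delta$. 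This reorganization is arguably cleaner and avoids an external citation; its cost is that strict convexity of $\phi$ in \emph{every} direction has to be established carefully, and this is exactly where your write-up is thinnest. Specifically: (i) in Part 1, the claim that strict inequality on the hypotenuse $\{\xi_1+\xi_2=1\}$ follows from ``strict convexity on the axial edges'' is not correct as stated --- at a hypotenuse point the convex combination puts zero weight on $(0,0)$, so one needs strict convexity of $\phi$ along the direction $(1,-1)$ (or the paper's H\"older inequality \eqref{eq:phi.xi1xi2}, which is the cleanest route). Ruling out $\langle\bb h,\bb U\rangle=0$ a.s.\ requires combining \eqref{assump:alpha} (to force the constant $c$ in $|A_1|=e^c|A_2|^\beta$ to vanish) with \eqref{assump:notapower}, as you hint at but do not verify; (ii) in Part 2 the Lagrange multiplier step tacitly uses $\nabla\phi(\bb\xi^*)\neq\bb 0$; under your strict-convexity framework this follows because $\nabla\phi(\bb\xi^*)=\bb 0$ would make $\bb\xi^*$ the unique global minimizer of $\phi$, forcing $\phi\geq 1$ everywhere and contradicting Part 1 --- worth stating explicitly. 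Both gaps are small and fixable, but as written they leave a hole precisely at the boundary case and at the regular-value assumption the paper outsources to H\"oglund.
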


\begin{remark}\label{rem:xi.interior}
	Note that each of the assumptions \eqref{assump:mixedmoments} and \eqref{eq:mixedblocks} implies that $\phi(1,1)<\infty$. Given $\phi(1,1)<\infty$, the convexity of $\phi$ readily implies that $[0,1]^2 \subset I$. Then any $\bb \xi^* \in (0,1)^2$ satisfies $\bb \xi^* \in \interior{I}$, a fact that will be useful for us later.
\end{remark}

Rephrasing the last part of the lemma, we have that, under assumption \eqref{assump:positivedependence}, the set $D$ is an arc from $(0,1)$ to $(0,1)$ that, except for the endpoints, is strictly inside the square $[0,1]^2$ and strictly above the triangle $\{ \xi_1+\xi_2\le 1\}$. There is a unique point $\bb \xi^*$ on this arc, where the function $h(\bb \xi)=\xi_1+\xi_2$ is maximized, namely where the gradients $\nabla h(\bb \xi^*)$ and $\nabla \phi(\bb \xi^*)$ are parallel. This $\bb \xi^*$ will give the right order  of decay in \eqref{eq:result:intro}.

If \eqref{assump:positivedependence} fails, then it may happen that $D$ is partly  outside the square $[0,1]^2$. For our analysis it is sufficient that the function $h$ on $D \cap [0,1]^2$ is maximized inside the square. See Figure \ref{fig:plotPhi} for two plots of $\phi$, pertaining to the log-Gaussian example described below in Section \ref{sect:Examples}. In both cases, we have $\bb \xi^* \in D \cap (0,1)^2$.

\begin{figure}[h]
	\centering
	\includegraphics[width=.90\textwidth]{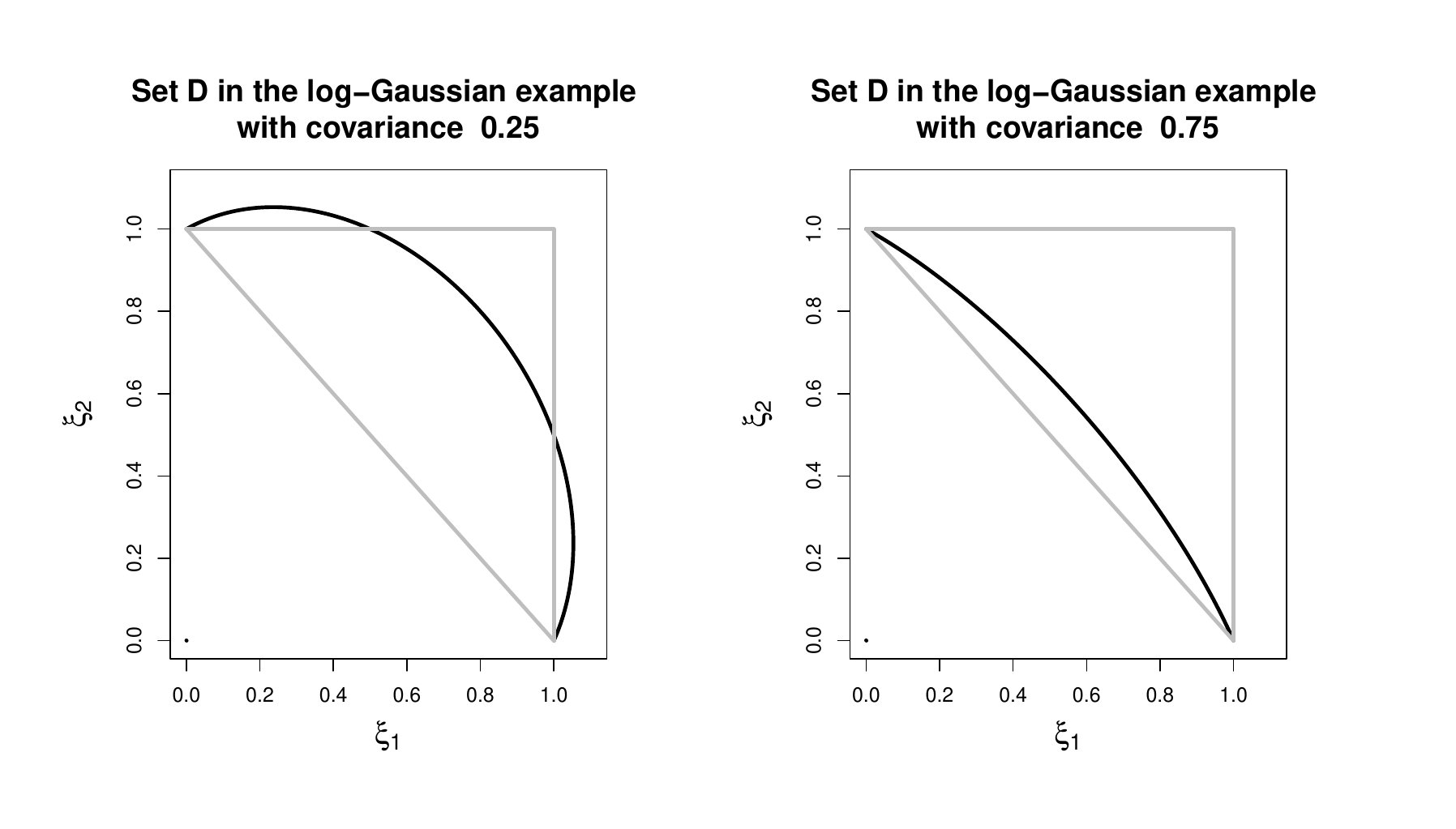}
	\caption{Plots of the level set $D$. Left: \eqref{assump:positivedependence} fails, Right: \eqref{assump:positivedependence} holds.}
	\label{fig:plotPhi}
\end{figure}

\medskip

Our first result is concerned with values $\bb \xi \in (0,1)^2 \cap  \mathrm{int}(\Delta)$, that is for $\bb \xi$ inside the square and below the arc $D$.

\begin{proposition}\label{prop:m-delta}
		Assume \eqref{assump:alpha}, \eqref{assump:moments}, \eqref{assump:nondeg},\eqref{assump:notapower}  and \eqref{assump:mixedmoments}. For any $\bb \xi \in [0,1)^2$ such that $\phi(\bb \xi)<1$, it holds that
	\begin{equation}\label{eq:m-delta}
		\lim_{t \to \infty} t^{\xi_1+\xi_2} \P\big( |X_1|>t^{1/\alpha_1}, |X_2|>t^{1/\alpha_2} \big)=0.
	\end{equation}
\end{proposition}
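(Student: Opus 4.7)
The plan is to reduce the statement to a finite mixed-moment estimate through a generalized Markov inequality. For any nonnegative $\bb \eta$,
\[
\P\big(|X_1|>t^{1/\alpha_1},\, |X_2|>t^{1/\alpha_2}\big) \le \P\big(|X_1|^{\eta_1\alpha_1}|X_2|^{\eta_2\alpha_2} > t^{\eta_1+\eta_2}\big) \le t^{-(\eta_1+\eta_2)}\, \E\big[|X_1|^{\eta_1\alpha_1}|X_2|^{\eta_2\alpha_2}\big].
\]
Hence, if an exponent $\bb \eta$ with $\eta_1+\eta_2 > \xi_1+\xi_2$ can be chosen so that the mixed moment on the right is finite, multiplying by $t^{\xi_1+\xi_2}$ produces an extra decay $t^{-(\eta_1+\eta_2-\xi_1-\xi_2)}\to 0$, which is exactly the desired conclusion.

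To produce such $\bb \eta$, I would exploit that $\phi$ is continuous on its domain: since $\bb \xi \in [0,1)^2$ lies in the open region $\{\phi<1\}$, the choice $\bb \eta := \bb \xi + \veps(1,1)$ for sufficiently small $\veps>0$ yields $\bb \eta \in [0,1)^2$, $\phi(\bb \eta)<1$, and $\eta_1+\eta_2=\xi_1+\xi_2+2\veps$. By Remark \ref{rem:xi.interior} one even has $\bb \eta \in \interior{I}$, so $\phi(\bb \eta)$ is analytic at $\bb \eta$.

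The main remaining task is then to show that $M(\bb \eta):=\E\big[|X_1|^{\eta_1\alpha_1}|X_2|^{\eta_2\alpha_2}\big]<\infty$ whenever $\bb \eta \in [0,1)^2$ and $\phi(\bb \eta)<1$. This is precisely the type of mixed-moment estimate developed in Section \ref{sect:moments}, and I would invoke the corresponding lemma there. The underlying mechanism is to exploit $\bfX \eqdist \bfA \bfX + \bfB$ with $\bfX$ independent of $(\bfA,\bfB)$: expanding $|A_i X_i + B_i|^{\eta_i \alpha_i}$ on each coordinate, the pure $\bfA$-contribution contributes the factor $\phi(\bb \eta)<1$ at each iteration, while the four arising cross-terms are controlled by the marginal moments $\E[|X_j|^{\eta_j\alpha_j}]<\infty$ (finite since $\eta_j<1$, by a standard perpetuity estimate using the convexity of $s\mapsto \E[|A_j|^s]$) and by the mixed $\bfA,\bfB$-expectations in \eqref{assump:mixedmoments}. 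I expect the main obstacle to be that when $\eta_i\alpha_i>1$ the inequality $|a+b|^{\eta_i\alpha_i}\le C(|a|^{\eta_i\alpha_i}+|b|^{\eta_i\alpha_i})$ incurs a multiplicative constant $C>1$, so a one-step contraction fails; the standard remedy is to iterate via $\bfX=\bfX_{\le n}+\bfA_1\cdots\bfA_n \bfX_{>n}$ and drive $\phi(\bb \eta)^n\to 0$, which is what Section \ref{sect:moments} carries out. Plugging the resulting bound into the Markov estimate of the opening paragraph then completes the proof.
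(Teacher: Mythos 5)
Your proposal is correct and follows essentially the same route as the paper: a Markov-type bound that trades the joint tail probability for a mixed moment $\E\big[|X_1|^{\eta_1\alpha_1}|X_2|^{\eta_2\alpha_2}\big]$ with exponent vector slightly inflated beyond $\bb\xi$ (the paper perturbs only the first coordinate, $\bb\xi'=(\xi_1+\delta,\xi_2)$, but your symmetric perturbation $\bb\xi+\veps(1,1)$ works equally well by the same convexity argument), and then an appeal to the mixed-moment finiteness result of Section~\ref{sect:moments} (Theorem~\ref{thm:moments}). Your diagnosis of the technical heart of that moment bound (loss of subadditivity when $\eta_i\alpha_i>1$, handled by iteration and the contraction $\phi(\bb\eta)^n\to 0$) matches what the paper carries out in Lemmas~\ref{lem:one}--\ref{lem:g}.
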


By Lemma \ref{lem:properties.phi2}, we have that  $\phi(\bb \xi)<1$ on the line $\{ \bb \xi \in (0,1)^2 \, : \, \xi_1+\xi_2=1\}$. Hence the proposition gives that the probability of a simultaneous extreme event at the $\bb \alpha$-scale is of order less than $t^{-(1+\delta)}$ for some $\delta>0$. Note that if $X_1$ and $X_2$ were independent, then the probability of a simultaneous extreme event at the $\bb \alpha$-scale would be of order $t^{-2}$.
Proposition \ref{prop:m-delta} will be proved in Section 6.

Our main result, as already announced in the introduction, shows that $\bb X$ exhibits hidden regular variation which is detected by extending the forbidden zone to the axes.

\begin{theorem}\label{thm:main}
	Assume \eqref{assump:alpha}, \eqref{assump:moments}, \eqref{assump:nondeg}, 
	 \eqref{assump:mixedmoments} and (as a sufficient condition for \eqref{assump:cramer}) that the law of $(\log |A_1|, \log |A_2|)$ has an absolutely continuous component with respect to the Lebesgue measure on $\R^2$.
	 Suppose $\P(|A_1|>1, |A_2|>1)>0$ and that $\supp(X)$ contains an open set.
	
		Assume that there is $\bb \xi^* \in D \cap (0,1)^2$ such that  $\nabla \phi(\bb \xi^*)$ is parallel to $(1,1)$. 
		Then there exists a non-zero Radon measure $\Lambda_2$ on $\R^2\setminus[\mathsf{axes}]$ such that for all $f\in \bfC (\R ^2\setminus[\mathsf{axes}])$, 
	\begin{equation}\label{eq:precise.simple}
	\lim_{t \to \infty}	(\log t)^{1/2}	t^{\xi^*_1+\xi^*_2}\E\big[ f(t^{-1/\alpha_1}X_1,t^{-1/\alpha_2}X_2) \big]= \int f(\bb x) \Lambda_2(d\bb x).
	\end{equation}
	In particular,
	\begin{equation}\label{eq:precise.prob}
		\lim_{t \to \infty}	(\log t)^{1/2}	t^{\xi^*_1+\xi^*_2}\P\big( |X_1|>t^{1/\alpha_1}, |X_2|>t^{1/\alpha_2} \big) \in (0,\infty).
	\end{equation}
\end{theorem}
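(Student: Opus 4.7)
The plan is to apply a two-dimensional implicit renewal argument in the spirit of Goldie, but with the Esscher transform taken at the interior critical point $\bb\xi^*$ rather than at a marginal exponent. In logarithmic coordinates the fixed-point equation $\bfX \eqdist \bfA \bfX + \bfB$ becomes a translation by the random vector $\bb U = (\alpha_1 \log|A_1|, \alpha_2 \log|A_2|)$ on $\R^2$, twisted by the sign matrix $\bb K$ acting on the finite group $K$. Accordingly, for each $f \in \bb H^\epsilon(\R^2 \setminus [\mathsf{axes}])$ I would introduce
\begin{equation*}
G_f(\bb s, \bb k) := \E\bigl[ f(e^{-s_1/\alpha_1} k_1 X_1, e^{-s_2/\alpha_2} k_2 X_2) \bigr], \qquad H_f(\bb s, \bb k) := e^{\skalar{\bb\xi^*, \bb s}} G_f(\bb s, \bb k),
\end{equation*}
so that \eqref{eq:precise.simple} reduces to showing $(\log t)^{1/2} H_f\bigl((\log t, \log t), \Id\bigr) \to \int f \, d\Lambda_2$. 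A density/approximation argument will then extend the convergence from $\bb H^\epsilon$ to all of $\bfC(\R^2 \setminus [\mathsf{axes}])$.

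Substituting the fixed-point equation, writing $\bfA = e^{\bb U/\bb\alpha} \bb K$, and controlling the contribution of the additive term $\bfB$ via the $\epsilon$-$\bb\alpha$-H\"older regularity of $f$ gives a decomposition of the form
\begin{equation*}
H_f(\bb s, \bb k) = (H_f * \wt\mu_{\bb\xi^*})(\bb s, \bb k) + \eta_f(\bb s, \bb k)
\end{equation*}
on $\R^2 \times K$, where the convolution is taken with respect to the Esscher-tilted law $\wt\mu_{\bb\xi^*}(d\bb u, d\bb k') = e^{\skalar{\bb\xi^*, \bb u}} \wt\mu(d\bb u, d\bb k')$. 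Since $\phi(\bb\xi^*) = 1$, this is a genuine probability measure on $\R^2 \times K$, and iteration yields the renewal representation $H_f = \eta_f * \wt{\mathds U}_{\bb\xi^*}$. Verifying that $\eta_f$ is directly Riemann integrable in the sense of $\|\cdot\|_{\dri}$ is the core technical step: the exponential weight $e^{\skalar{\bb\xi^*, \bb s}}$ has to be balanced against mixed-moment bounds for $|X_1|^{\xi_1 \alpha_1} |X_2|^{\xi_2 \alpha_2}$ supplied by Section \ref{sect:moments}, and the control relies essentially on $\phi(\bb\xi) < 1$ strictly inside the arc $D$ (Lemma \ref{lem:properties.phi2} and Proposition \ref{prop:m-delta}).

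The asymptotics are then extracted from the two-dimensional renewal theorem with finite group component proved in Section \ref{sect:2dRenewal.with.K}. Under $\wt\mu_{\bb\xi^*}$ the mean vector is $\bb m_{\bb\xi^*} = \nabla\phi(\bb\xi^*)$, which by hypothesis is parallel to $(1,1)$, and Cram\'er's condition \eqref{assump:cramer} is in force thanks to the absolute continuity assumption. The target $(\log t, \log t)$ lies exactly along the drift, so an Edgeworth expansion for $\bb S_n$ under $\P_{\bb\xi^*}$ gives
\begin{equation*}
\wt{\mathds U}_{\bb\xi^*}\bigl((\log t, \log t) + C, \{\bb k\}\bigr) \sim \frac{c(\bb k)}{\sqrt{\log t}}\,|C|
\end{equation*}
for each bounded Borel $C \subset \R^2$ and $\bb k \in K$---the $(\log t)^{-1/2}$ factor being exactly the value of the Gaussian density for the fluctuation of $\bb S_n$ in the direction perpendicular to $(1,1)$, evaluated on the mean line. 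Multiplying by $(\log t)^{1/2}$ and combining with $H_f = \eta_f * \wt{\mathds U}_{\bb\xi^*}$ then yields \eqref{eq:precise.simple} with an explicit integral representation of the limit in terms of $\eta_f$ and $c(\bb k)$.

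The main obstacle is establishing direct Riemann integrability of $\eta_f$ uniformly in the group component $\bb k \in K$: the replacement of $f$ by its value at the tilted argument must be combined with summable tail estimates for the mixed moments of $\bfX$, and the resulting bounds must dominate the exponential weight $e^{\skalar{\bb\xi^*, \bb s}}$ with enough room to sum a $1 + \|\bb s\|$ factor. A secondary, logically independent, task is to show that the Radon measure $\Lambda_2$ produced by the above integral formula is non-degenerate, so that \eqref{eq:precise.prob} holds with a strictly positive constant; this is the business of Section \ref{sect:positivity} and uses in an essential way the assumption $\P(|A_1|>1, |A_2|>1) > 0$ together with the openness of $\supp(\bfX)$ to exhibit sample paths of $(\bfX_n)$ along which both coordinates simultaneously grow, feeding a positive test function into the limit.
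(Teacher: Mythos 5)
Your proposal follows essentially the same route as the paper: Esscher-tilt at $\bb\xi^*$, rewrite the fixed-point equation as a renewal equation $H_f = \eta_f * \wt{\mathds U}_{\bb\xi^*}$ on $\R^2 \times K$, verify direct Riemann integrability of the correction $\eta_f$ using mixed-moment bounds and strict subcriticality of $\phi$ off $D$, invoke a renewal theorem on $\R^2 \times K$ giving the $(\log t)^{-1/2}$ rate from the Gaussian fluctuation perpendicular to the drift $\bb m_{\bb\xi^*}\parallel(1,1)$, extend from H\"older to continuous $f$, and finally establish non-degeneracy by exhibiting simultaneously large excursions using $\P(|A_1|>1,|A_2|>1)>0$ and the openness of $\supp(\bb X)$. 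The only small deviation is that you extract the renewal asymptotics directly from an Edgeworth expansion, whereas the paper proves its $\R^2\times K$ renewal theorem via a Choquet--Deny argument combined with Stam's and Carlsson's two-dimensional renewal theorems (this in particular shows the constant is independent of $\bb k$, which your notation $c(\bb k)$ leaves open), reserving the Edgeworth expansion for the positivity step.
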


\begin{remark}\label{rem:suppX.open.set}
	By \cite[Proposition 4.3.2]{Buraczewski2016} it holds that the law of $\bb X$ does not have atoms and is of pure type: either singular or absolutely continuous. Under the assumption that the law of $(\log |A_1|, \log |A_2|)$ has an absolutely continuous component, the absolute continuity of the law of $\bb X$ follows e.g. if $\bb B$ is independent of $\bb A$ (or constant). Then, in particular, $\supp(\bb X)$ contains an open set.
\end{remark}

Theorem \ref{thm:main} is formulated in such a way that it is directly applicable to various examples (see Section \ref{sect:Examples} below). It may be generalized in several ways. 
Firstly, we may consider a situation where $d>2$ and the state space can be partioned into two blocks as described in Subsection \ref{subsect:blocks}. Then \eqref{assump:notapower} is automatically satisfied. Secondly, we can weaken the assumptions on the law of $(\log |A_1|, \log |A_2|)$. 
We include these generalizations together with additional information about the structure of $\Lambda_2$ in our final result which proves hidden regular variation on $\R^d$ where the forbidden zone is $[\mathsf{blocks}]$. 
%
%

\begin{theorem}\label{thm:main:blocks}  Assume \eqref{assump:alpha},  \eqref{assump:moments}, \eqref{assump:nondeg} and \eqref{eq:mixedblocks}. Suppose that the equivalence relation defined in \eqref{eq:equivalence.relation} has two equivalence classes as described in \eqref{eq:equivalence.classes}. Assume that there  is  $\bb \xi^* \in D \cap (0,1)^2$ such that $m=h(\bb \xi^*)$ and that \eqref{assump:cramer} is satisfied for $\mu_{\bb \xi^*}$. Then there exists a Radon measure $\Lambda_2$ on $\R^d\setminus[\mathsf{blocks}]$ such that for all $f\in \bfC (\R ^d\setminus[\mathsf{blocks}])$,
	\begin{equation}\label{eq:precise}
		\lim_{t \to \infty} (\log t)^{1/2}	t^{\xi^* _1+\xi^* _2}\E \big[ f(t^{-1/\bb \alpha}\bb X)\big]= \int f(\bb x) \Lambda_2(d\bb x) .
	\end{equation} 
	The measure $\Lambda_2$ has the following invariance properties: For all $k \in K$ and $t>0$,
	\begin{equation}\label{eq:measure1}
		\int f(k \bb x) \Lambda_2(d \bb x) = \int f(\bb x) \Lambda_2(d \bb x) \ \text{ and } \ \int f(t^{1/\bb \alpha} \bb x) \lambda(d \bb x) = t^{\xi^*_1 + \xi^*_2} \int f(\bb x) \Lambda_2(d \bb x)
	\end{equation}
	as well as, for all $r>0$,
	\begin{equation}\label{eq:measure3} 
		\Lambda _2(\wt S _r) = 0.
	\end{equation}
	Moreover, there is a Radon measure $\nu $ on $\wt S_1$   such that for all $f\in \bfC \left (\R ^d\setminus [\mathsf{blocks}]\right )$ 
	\begin{equation}\label{eq:measure4}
		\int f(\bb x)\Lambda _2(d\bb x) = \int _{\wt S_1}\int _0^\8 f(s^{1\slash {\bb \a} }\omega )\frac{ds}{s^{\xi^* _1+\xi^* _2+1}}d\nu (\omega),
	\end{equation}
	and this measure $\nu$ is invariant under the action of $K$.
	 and satisfies, for all $g \in \mathcal{C}_c(\wt{S}_1)$,
	The measures $\Lambda_2$ and $\nu$ are nonzero if and only if for every $R>0$
	\begin{equation}\label{eq:suppX:unbounded}
		\P( \norma{X^{(1)}}>R, \, \norma{X^{(2)}}>R)>0.
	\end{equation}
If \eqref{eq:suppX:unbounded} holds, then the measure $\nu$ is unbounded. 
\end{theorem}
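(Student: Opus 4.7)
The plan is to follow an implicit renewal theory approach in two dimensions with values in the finite group $K$, driven by the Esscher transform at $\bb\xi^*$. Since $\phi(\bb\xi^*)=1$, the tilted law $\mu_{\bb\xi^*}$ is a probability measure on $\R^2$; since $\nabla\phi(\bb\xi^*)$ is parallel to $(1,1)$, under $\P_{\bb\xi^*}$ the random walk $\bb S_n$ has mean drift $\bfm_{\bb\xi^*}$ proportional to $(1,1)$. The factor $(\log t)^{1/2}$ in \eqref{eq:precise} is the natural 2D Gaussian correction: along the drift ray, the transversal fluctuations of $\bb S_n$ are of order $\sqrt{n}$, so the renewal density decays like $s^{-1/2}$.

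For a H\"older test function $f\in\bb H^\epsilon(\R^d\setminus[\mathsf{blocks}])$, I would introduce the two-parameter logarithmic version
\[ F(\bb s,\bb k):=e^{\langle\bb\xi^*,\bb s\rangle}\,\E\Big[f\Big(e^{-s_1/\bb\alpha^{(1)}}(\bb k\bfX)^{(1)},\,e^{-s_2/\bb\alpha^{(2)}}(\bb k\bfX)^{(2)}\Big)\Big],\quad \bb s\in\R^2,\ \bb k\in K, \]
so that the left-hand side of \eqref{eq:precise} equals $(\log t)^{1/2}F((\log t,\log t),\mathrm{I})$. Substituting $\bfX\eqdist\bfA\bfX+\bfB$, using the block homogeneity $\norma{\bfA^{(j)}\bfX^{(j)}}=e^{U_j}\norma{\bfX^{(j)}}$, and inserting the identity $e^{\langle\bb\xi^*,\bb U\rangle}e^{-\langle\bb\xi^*,\bb U\rangle}=1$, one derives a Goldie-type renewal identity $F=g+F\ast\wt\mu_{\bb\xi^*}$ on $\R^2\times K$, with $g$ built from the difference $f(\bfA\bfX+\bfB)-f(\bfA\bfX)$. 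Iterating yields $F=g\ast\wt{\mathds U}_{\bb\xi^*}$. The main technical obstacle is to verify that $g$ is directly Riemann integrable in the sense of $\|\cdot\|_{\dri}$: this will come from the mixed-moment estimates of Section \ref{sect:moments}, combined with \eqref{eq:mixedblocks} and the fact that $\bb\xi^*\in\interior{I}$ (Remark \ref{rem:xi.interior}), which leaves room in every direction to gain a little polynomial decay. Once this is in hand, the two-dimensional renewal theorem on $\R^2\times K$ from Section \ref{sect:2dRenewal.with.K}, combined with Edgeworth expansions in the transversal direction, produces the limit $\Lambda_2(f)$, with the $(\log t)^{-1/2}$ factor emerging as the local CLT density at the typical renewal point. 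Extension from $\bb H^\epsilon$ to $\bfC(\R^d\setminus[\mathsf{blocks}])$ then follows by a standard density argument.

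The invariance properties follow from uniqueness of the limit. Applying \eqref{eq:precise} to $\bb x\mapsto f(t_0^{1/\bb\alpha}\bb x)$ and rescaling $t\to t/t_0$ yields the scaling part of \eqref{eq:measure1}; applying it to $\bb x\mapsto f(k\bb x)$ for $k\in K$ and using that the recursion is equivariant under the sign-flip action of $K$ yields the $K$-part. These two one-parameter invariances jointly force, through the polar chart $\Theta$ of \eqref{polar}, the radial-angular disintegration \eqref{eq:measure4} with a $K$-invariant angular Radon measure $\nu$ on $\wt S_1$; the vanishing \eqref{eq:measure3} is then automatic, since $\wt S_r$ is a single orbit of the scaling flow and hence a null set for any scale-covariant Radon measure.

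Finally, positivity is the most delicate step and the focus of Section \ref{sect:positivity}. The ``only if'' direction of \eqref{eq:suppX:unbounded} is immediate: if the bi-extreme probability vanishes for some $R$, then every $f\in\bfC(\R^d\setminus[\mathsf{blocks}])$ has support in $\{\norma{\bb x^{(i)}}>\eta_i\}$ and the event $\{t^{-1/\bb\alpha}\bfX\in\supp f\}$ is empty for $t>R/\min_i\eta_i$, forcing $\Lambda_2=0$. The converse is the hardest step of the whole proof: one must exhibit a concrete nonnegative $f$ such that $g\ast\wt{\mathds U}_{\bb\xi^*}$ has a strictly positive limit along the diagonal, with \eqref{eq:suppX:unbounded} (read as an open-set condition on $\supp(\bfX)$) used to ensure that the Goldie remainder $g$ does not vanish in the bi-extreme region. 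For the unboundedness of $\nu$, once $\Lambda_2\neq 0$ is established, I would argue by contradiction: finiteness of $\nu(\wt S_1)$ would, via \eqref{eq:measure4}, bound $\Lambda_2(\{\norma{\bb x^{(1)}}>1\})$ by $\nu(\wt S_1)/(\xi_1^*+\xi_2^*)$; approximating this from within by the admissible test functions $\{\norma{\bb x^{(1)}}>1,\,\norma{\bb x^{(2)}}>\epsilon\}$ and comparing with the Kesten--Goldie marginal rate $\P(\norma{X^{(1)}}>t)\sim c/t$, which forces $(\log t)^{1/2}t^{\xi_1^*+\xi_2^*}\P(\norma{X^{(1)}}>t)\to\infty$ since $\xi_1^*+\xi_2^*>1$, yields the desired contradiction.
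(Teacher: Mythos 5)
Your overall architecture (implicit renewal identity on $\R^2\times K$ with the Esscher tilt at $\bb\xi^*$, dRi estimates via mixed moments, the two-dimensional renewal theorem with the $(\log t)^{1/2}$ Gaussian correction, a density argument to pass from H\"older to continuous test functions, and the invariance properties from uniqueness of the limit) matches the paper's proof in Sections \ref{sect:implicit.renewal}--\ref{sect:2dRenewal.with.K}. Your derivation of \eqref{eq:measure3} from \eqref{eq:measure4} rather than the other way around is a legitimate reorganisation, provided you can establish the polar disintegration directly from scale-covariance (the paper goes the other way, proving \eqref{eq:measure3} first by comparing $f_{1+\delta}$ with $f_{1-\delta}$ and then invoking Portmanteau).

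There is, however, a genuine gap in your argument for the unboundedness of $\nu$. You bound $\Lambda_2(\{\norma{\bb x^{(1)}}>1\})\le \nu(\wt S_1)/(\xi_1^*+\xi_2^*)$ and then try to contradict this with the Kesten--Goldie marginal rate $\P(\norma{X^{(1)}}>t)\sim c/t$. But \eqref{eq:precise} is only valid for test functions whose support is bounded away from $[\mathsf{blocks}]$: for each fixed $\eps>0$ you get $\lim_t(\log t)^{1/2}t^{\xi_1^*+\xi_2^*}\P(\norma{X^{(1)}}>t,\norma{X^{(2)}}>\eps t)=\Lambda_2(\{\norma{\bb x^{(1)}}>1,\norma{\bb x^{(2)}}>\eps\})$, and then $\sup_\eps$ of the right-hand side is $\Lambda_2(\{\norma{\bb x^{(1)}}>1\})<\infty$ by monotone convergence. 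There is no contradiction with $(\log t)^{1/2}t^{\xi_1^*+\xi_2^*}\P(\norma{X^{(1)}}>t)\to\infty$: the two limits $t\to\infty$ and $\eps\to 0$ do not commute, which is precisely the phenomenon that makes $\Lambda_2$ a ``hidden'' measure supported strictly off the blocks. What the paper actually does (Proposition \ref{thm:nu:unbounded}) is invoke the \emph{quantitative} lower bound of Theorem \ref{thm:positivity:measure}, namely $\liminf_t(\log t)^{1/2}t^{\xi_1^*+\xi_2^*}\P(\norma{X^{(1)}}>t,\norma{X^{(2)}}>\eps t)>c\,\eps^{-\xi_2^*}$ with $c$ independent of $\eps$; feeding this into the disintegration \eqref{eq:measure4} gives $c\,\eps^{-\xi_2^*}\le C\,\nu(\wt S_1)$, and letting $\eps\to 0$ forces $\nu(\wt S_1)=\infty$.

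Relatedly, your sketch for the ``if'' direction of positivity diverges from the paper. You propose to show that $g\ast\wt{\mathds U}_{\bb\xi^*}$ has a strictly positive limit, i.e.\ that $\sum_k\int g_{\bb\xi^*}(\bb s,k)\,d\bb s>0$. But $g_{\bb\xi^*}$ is a \emph{difference} of two expectations and has no a priori sign, so it is not clear how to certify $\int g>0$ from \eqref{eq:suppX:unbounded} alone. The paper circumvents this entirely: it uses the decomposition $\bfX=\bfX_{\le n}+\bfA_1\cdots\bfA_n\bfX_{>n}$, an Edgeworth expansion under $\P_{\bb\xi^*}$ to estimate $\P(I_n)$ for $n$ near $n_0=\lceil\log t/\rho\rceil$ (Lemma \ref{lem:application.Edgeworth}), and an inclusion--exclusion argument over a grid of size $M$ (Proposition \ref{thm:keyasym}) to produce the lower bound $c\,\eps^{-\xi_2^*}$ directly at the probabilistic level, bypassing the renewal representation. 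This both establishes nontriviality and provides the $\eps$-uniform estimate needed for the unboundedness of $\nu$, so the two gaps are in fact linked.
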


\begin{remark}
	The property \eqref{eq:suppX:unbounded} is obviously necessary for \eqref{eq:precise}. Its sufficiency (under the assumptions of Theorem \ref{thm:main:blocks}) is proved in Section \ref{sect:positivity}. We prove in Lemma \ref{lem:X:unbounded} the following. 	Suppose there are $(\bb a, \bb b)$  in the semi-group generated by  $\supp (\bfA , \bfB )$ such that both $|a_1|>1$ and $|a_2|>1$, and let $\bb x_0:=(\Id - \bb a)^{-1} \bb b$. Suppose that there is $\bb x_1  \in \supp (\bb X)$ such that $(\bb x_1 - \bb x_0)^{(i)} \neq \bb 0$ for both $i \in \{1,2\}$. Then \eqref{eq:suppX:unbounded} holds. 
	
	These assumptions are weak and easy to check in examples. Further sufficient conditions for the latter assumptions are provided in Remark \ref{rem:suff.cond.unbounded}.
\end{remark}

Theorem \ref{thm:main} is a special case of Theorem \ref{thm:main:blocks}. The proof of Theorem \ref{thm:main:blocks} will be given in Section \ref{sect:implicit.renewal} except for the last two assertions (nontriviality and unboundedness of $\nu$), which will be proved in Section \ref{sect:positivity}.

\section{Examples}\label{sect:Examples}

\subsection{CCC-GARCH(1,1)}

The Constant Conditional Correlation (CCC) GARCH model, introduced by \cite{Bollerslev1990}, is defined as follows (cf. \cite[Definition 10.4]{Francq2019}). Let $(\bb{Z}_n)$ be a sequence of i.i.d. centered random vectors with covariance matrix $\bb{C}$. A process $(\sigma_{n,1}^2, \dots, \sigma_{n,d}^2)$ is called CCC-GARCH(1,1) if it satisfies
\begin{equation}
	\begin{cases}
		& {\bb R}_n = \mathrm{diag}(\sigma_{n,1}^2, \dots, \sigma_{n,d}^2) \bb{Z}_n \\[.2cm]
		& \sigma_{n,i}^2= a_i +b_i \sigma_{n-1,i}^2 + c_i R_{n-1,i}^2 = a_i +b_i \sigma_{n-1,i}^2 + c_i Z_{n-1,i}^2 \sigma_{n-1,i}^2
	\end{cases}
\end{equation}
for positive constants $a_i$, $b_i$ and $c_i$. Note that this is the original model introduced by \cite{Bollerslev1990}; the general CCC-GARCH model of \cite{Starica1999} contains additional cross terms $\sum_{j\neq i} b_{i,j} \sigma_{t-1,j}^2 + c_{i,j} R_{t-1,j}^2$ in the formula for $\sigma_{t,i}$.

If we put $\bb{X}_n=(\sigma_{n,1}^2, \sigma_{n,2}^2)$, the CCC-GARCH(1,1) model with dimension $d=2$ fits in our framework with  
$$ \bb{A}=(b_1+c_1 Z_{1}^2, b_2+c_2 Z_{2}^2), \qquad \bb{B}_n = (a_1, a_2),$$
where $\bb{Z}=(Z_1,Z_2)$ is a centered random vector with a fixed covariance matrix $\bb{C}$. 
 Validity of \eqref{assump:alpha}, \eqref{assump:moments}, \eqref{assump:nondeg} has been checked in \cite{Mentemeier2022} for the case where $\bb Z$ is Gaussian. Assuming furthermore that $\bb C$ is strictly positive definite, we have that $(\log |A_1|, \log|A_2|)$ has a continuous density with respect to Lebesgue measure on $\R^2$ and $\P(|A_1|>1, |A_2|>1)>0$. Since $\bb B$ is constant and all moments of $\bb Z$ are finite, Assumption \eqref{assump:mixedmoments} is also satisfied. By Remark \ref{rem:suppX.open.set} it also follows that $\supp(\bb X)$ contains an open set.
	
For given model parameters, the existence and value of $\bb \xi^*$ can be obtained by numerical approximation of $\phi$. If $\bb \xi^* \in (0,1)^2$ exists, then Theorem \ref{thm:main} is applicable and we obtain hidden regular variation at the rate $t^{\xi_1^*+\xi_2^*}.$

\begin{figure}[h]
	\centering
	\includegraphics[width=.95\textwidth]{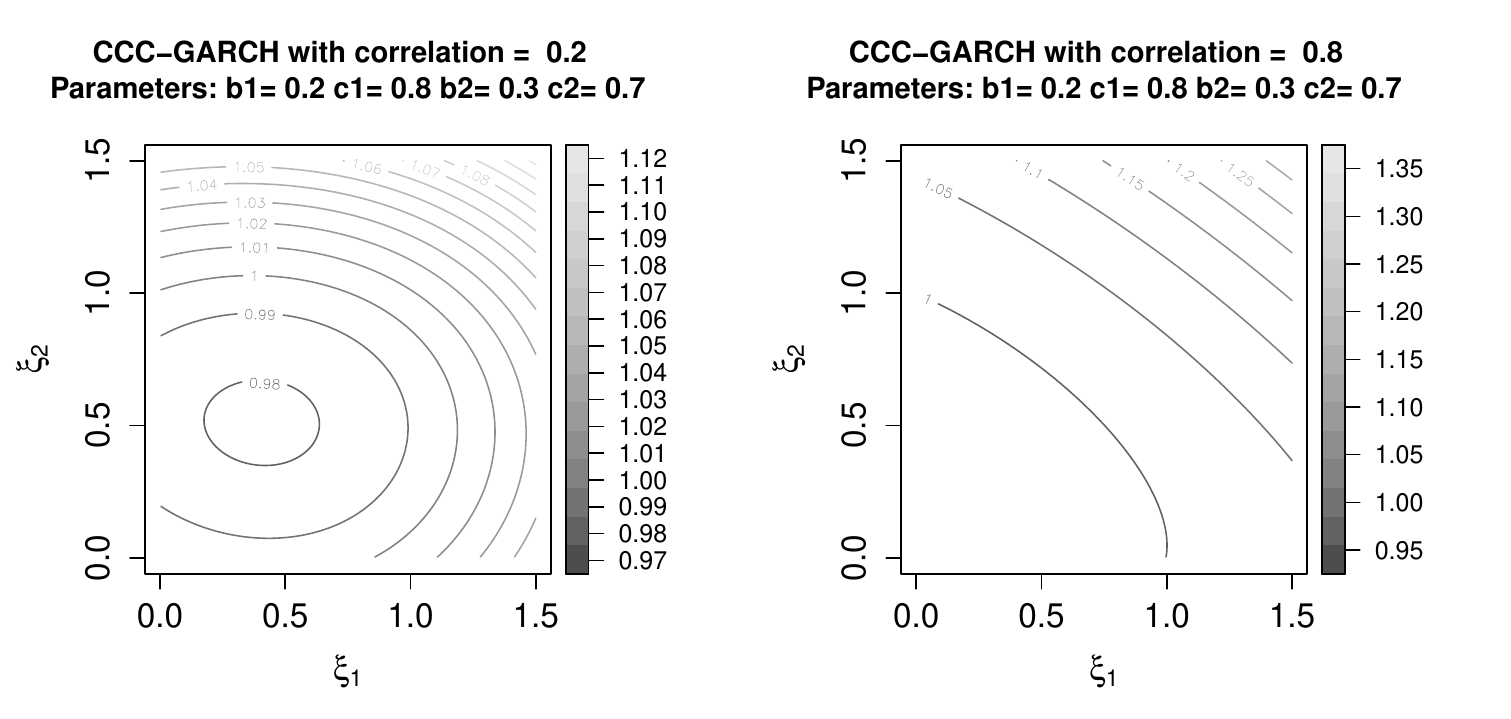}
	\caption{Plots of $\phi$ for different parameters in the CCC-GARCH-model}
	\label{fig:plotCCC}
\end{figure}

Whether \eqref{assump:positivedependence} is satisfied or not depends on the model parameters, see Figure \ref{fig:plotCCC} for two plots of $\phi$ for different sets of parameters. We may also give a qualitative illustration in a simple case where  $Z_1, Z_2$ are standard Gaussian with covariance $\eta \in (-1,1)$.
We may use that $(Z_1,Z_2)$ has the same law as $(Z_1,\eta Z_1 + (1-\eta)Z')$ where $Z_1$ and $Z'$ are independent. Consider further the parameters $b_1=b_2=c_1=c_2=0.5$. Then $\alpha_1=\alpha_2=1$
and we obtain
\begin{equation}\label{eq:posdep:CCC}
	 \E \big[ |A_1|^{\alpha_1} \log |A_2| \big] = \E \big[ (b_1 + c_1 Z_1^2) \log (b_1 + c_1 (\eta^2 Z_1^2 + 2\eta(1-\eta) Z_1 Z') + (1-\eta)^2Z'^2))  \big] 
\end{equation}
as well as a similar expression for $\E \big[ |A_2|^{\alpha_2} \log |A_1| \big]$. If we let $\eta \to 1$ in \eqref{eq:posdep:CCC}, then the expression tends to $\E \big[ |A_1|^{\alpha_1} \log |A_1| \big]>0$, while for $\eta \to 0$ the expression tends to $\E \big[ |A_1|^{\alpha_1}\big] \E [\log|A_2|]<0$.

\subsection{Diagonal BEKK-ARCH(m)}

The Gaussian BEKK-ARCH(m) (or BEKK(1,0,m)) model, introduced by \cite{Engle1995}, is defined as follows (cf. \cite[Definition 10.5]{Francq2019}). Let $(\bb Z_n)$  be a sequence of i.i.d.\ Gaussian random vectors with $\bb Z_1 \sim \mathcal{N}(\bb 0,\Id_d)$. A process $\bb X_n \in \R^d$ is called a Gaussian BEKK-ARCH(m)-process if it satisfies
\begin{equation}
	\begin{cases}
		& {\bb X}_n = \bb \Sigma_n^{1/2} \bb{Z}_n \\[.2cm]
		& \bb \Sigma_n = \bb C + \sum_{\ell =1}^m \bb A_{\ell} \bb{X}_{n-1} \bb{X}_{n-1}^\top A_{\ell}^\top	\end{cases}
\end{equation}
for a deterministic positive definite $d \times d$ matrix $C$ and deterministic $d \times d$ square matrices $A_{\ell}$, $1\le \ell \le m$.

It was observed in \cite{Pedersen2018} that (a version of) $\bb X_n$ satisfies a stochastic recurrence equation
$$ \bb X_n = \bb A_n \bb X_{n-1} + \bb B_n$$
where $(\bb A_n, \bb B_n)$ are i.i.d.\ copies of $(\bb A, \bb B)$ which are given as follows: The matrix $\bb A$ is independent of $\bb B \sim\mathcal{N}(\bb 0, \bb C)$, and
$$ \bb A = \sum_{\ell=1}^m M_{\ell} A_{\ell}$$
where $M_1, \dots, M_m$ are i.i.d.\ standard Gaussian random variables.

Hidden regular variation properties of the stationary solution can be analyzed using our results if $d=2$, $m \ge 2$ and $A_{\ell}$, $1 \le \ell \le m$ are diagonal matrices. Observe that in this setting,
$$ 
\left( \begin{matrix}
	A_1 \\
	A_2
\end{matrix}
\right)  =
\left(
\begin{matrix}
	A_{1,1} & A_{2,1} & \cdots & A_{m,1} \\
	A_{1,2} & A_{2,2} & \cdots & A_{m,2} 
\end{matrix}
\right)
\left(
\begin{matrix}
	M_1 \\ M_2 \\ \vdots \\ M_m
\end{matrix}
\right)
$$
and $(A_1, A_2)$ is a fully bivariate Gaussian as soon as the matrix $(A_{\ell,i})_{1 \le \ell \le m, 1\le i \le 2}$ has rank 2; this we need to assume. 
If the entries of $A_{\ell,m}$ are sufficiently small, the conditions \eqref{assump:alpha}, \eqref{assump:moments} and \eqref{assump:nondeg} are readily checked, see \cite[Section 2]{Pedersen2018} for details. Since $\bb A$ and $\bb B$ are independent with Gaussian laws, Assumption \eqref{assump:mixedmoments} is satisfied, as well as the conditions $\P(|A_1|>1, |A_2|>1)>0$ and that $\supp(\bb X)$ contains an open set.

As in the previous example, Assumption \eqref{assump:positivedependence} may be valid or not, depending on the correlation between $A_1$ and $A_2$. Nevertheless, existence and the value of $\bb \xi^*$  can be assessed in a simple way through numerical approximation of $\phi$. If $\bb \xi^*$ exists, then Theorem \ref{thm:main} is applicable.

Note that here $K=\Z_2 \times \Z_2$ which implies that the tail behavior is equal in all quadrants, i.e.,
$$ \lim_{t \to \infty} \, (\log t)^{1/2} t^{\xi^*_1+\xi^*_2} \P( \pm X_1 >t, \pm X_2 >t) =c >0$$
for any combination of signs, as a consequence of the $K$-invariance of $\Lambda_2$.

\begin{figure}[h]
	\centering
	\includegraphics[width=.95\textwidth]{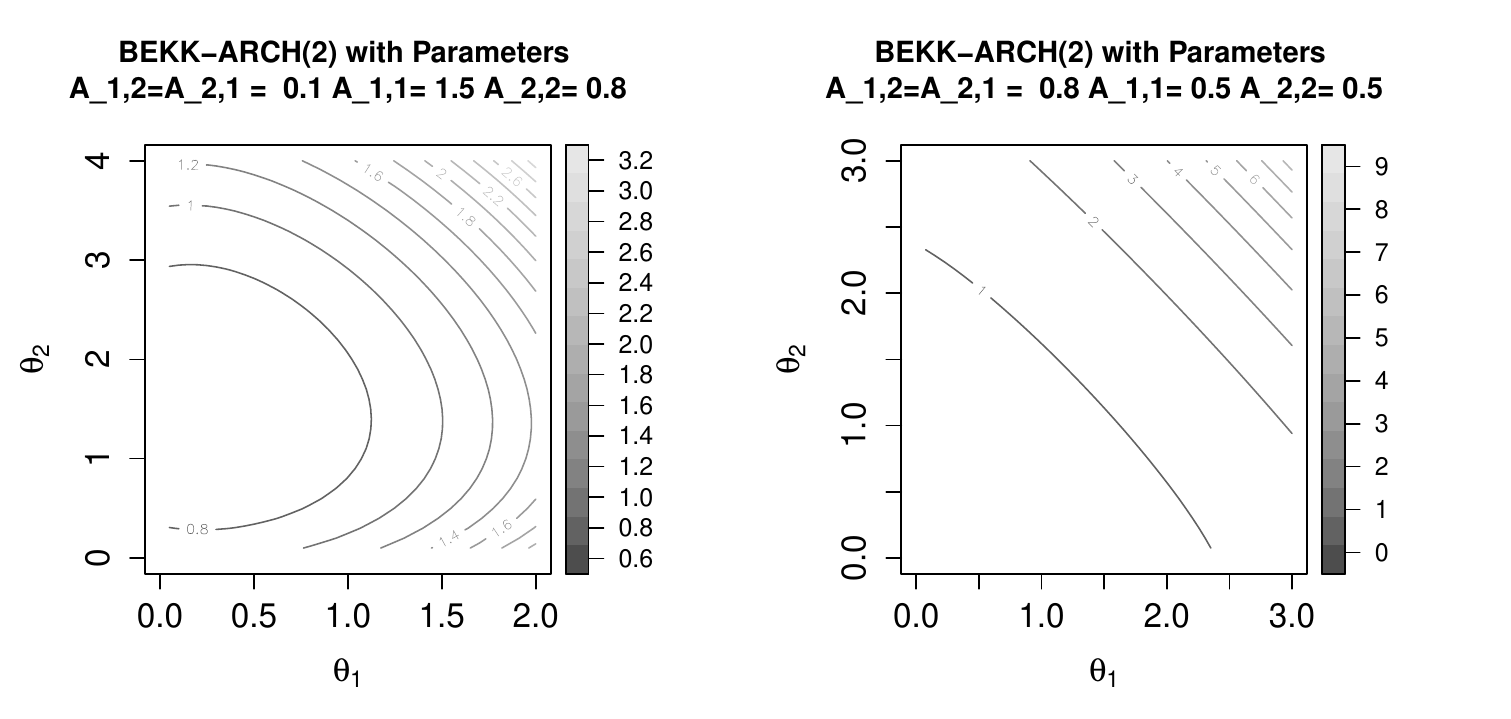}
	\caption{Plots of $\bb \Phi(\bb \theta):=\E\big[ |A_1|^{\theta_1} |A_2|^{\theta_2} \big]$ for different parameters in the BEKK-ARCH(2)-model}
	\label{fig:plotBEKK}
\end{figure}

\subsection{A Diagonal SRE with Log-Gaussian Multiplicative Noise}
The following example is not motivated by applications, but by the fact that explicit (analytic) calculations of  $\bb \Phi(\bb \theta):=\E\big[ |A_1|^{\theta_1} |A_2|^{\theta_2} \big]$ are available.
Suppose that $\m{A}=(\exp(Z_1), \exp(Z_2))$ where $(Z_1,Z_2) \sim \mathcal{N}(\bb{m}, \m{C})$, {\em i.e.}, the entries of $\m{A}$ have a joint log-Gaussian distribution. We assume that $\m{C}$ is positive definite. Let the random vector $\bb{B}$ be independent of $\m{A}$ and have any distribution that satisfies the moment assumptions \eqref{assump:alpha} for $\alpha_i$ to be calculated below - for simplicity, we may assume a distribution such that $\bb{B}$ has all moments. 

By the choice of our model, we have $\bb{U}=(Z_1,Z_2)$, and by direct computations
\begin{align*}
	\bb \Phi(\bb \theta) &= \int_{\R^2} \frac{1}{2 \pi \det(\m{C})^{1/2}} \exp(\skalar{\bb \theta,\bb{z}}) \exp(-\frac12 \skalar{\bb{z}-\bb{m},\m{C}^{-1}(\bb{z}-\bb{m})}) \, d\bb{z}\\
	&= \exp \big( \skalar{\bb{m},\bb \theta} + \frac12\skalar{\bb \theta, \m{C} \bb \theta}\big)
\end{align*}
Hence the set $\{\bb  \theta \, : \, \bb \Phi (\bb \theta)=1\}$ is a quadric, namely an ellipse with principal axes parallel to the eigenvectors of $\m{C}$.

%

Turning first to the assumptions on the marginal laws, we have,  with $\bb{e}_1=(1,0)$ and $\bb{e_2}=(0,1)$, the identity
\begin{align}
	\log \E [A_i^s] = \log  \bb \Phi(s \bb{e}_i) = sm_i + \frac12 s^2 \skalar{\bb{e}_i, \m{C} \bb{e_i}}. \label{eq:EAs:logGaussian}
\end{align}
Since the second term is a positive definite quadratic form, $m_i<0$ for $i \in \{1,2\}$ is necessary and sufficient for the existence of $\alpha_i$ as in condition \eqref{assump:alpha}: these are the positive roots of the quadratic functions in \eqref{eq:EAs:logGaussian}.
For explicit calculations, let us assume further that
$$ \m{C} = \left( \begin{matrix}
	1 & \eta \\ \eta & 1
\end{matrix} \right)$$
for some $\eta \in (-1,1)$.
Then
$$ \log \bb \Phi(\bb \theta)= m_1 \theta_1 + m_2 \theta_2 + \frac12  \theta_1^2+ \eta \theta_1 \theta_2 + \frac12\theta_2^2; \qquad \log \bb \Phi(s \bb{e}_i)= sm_i +\frac12 s^2 $$
and we obtain $ \alpha_i = - 2 m_i$. In order to check \eqref{assump:positivedependence}, we consider
 $$ \E \big[ A_1^{\alpha_1} \log A_2\big] =\frac{\partial}{\partial \theta_2} \bb \Phi( (\alpha_1,0) ) = m_2 +\eta \alpha_1 = m_2 -2 \eta m_1; \quad \E \big[ A_2^{\alpha_2} \log A_1\big] = m_1 - 2 \eta m_2.$$
Recalling that $m_1, m_2$ are negative, it is required here that the value of $\eta$ is sufficiently large, and we have to exclude cases where $|m_1|>2 |m_2|$ or vice versa.
Figure \ref{fig:plotPhi} shows two plots of the set $D$ for this model with $m_1=m_2=-0.5$ and different values of $\eta$. 

We briefly comment on the remaining assumptions: Conditions \eqref{assump:mixedmoments} and \eqref{assump:nondeg} are satisfied since $A_i$ are independent of $B_i$.  Condition \eqref{assump:cramer} is satisfied since $\bb U$ has a nondegenerate Gaussian distribution on $\R^2$. This also implies that $\P(|A_1|>1, |A_2|>1)>0$ and, using as well the independence of $\bb A, \bb B$ that $\supp(X)$ contains an open set (cf. Remark \ref{rem:suppX.open.set}). Hence Theorem \ref{thm:main} applies.
%
%
%

\section{Properties of $\phi$}\label{sect.phi}

In this section, we prove Lemma \ref{lem:properties.phi2}, thereby providing additional properties of the moment generating function $\phi$ that are valid in our setting.

\begin{proof}[Proof of Lemma \ref{lem:properties.phi2}]
	
	 The first assertion follows since $\phi$ is convex and we have $\phi(0,0)=1$ as well as $\phi(1,0)=\phi(0,1)$ due to Assumption \eqref{assump:alpha}. In addition, due to \eqref{assump:notapower}, we have a strict inequality when applying the Hölder inequality to points on the line $\xi_1+\xi_2=1$:
	 \begin{equation}\label{eq:phi.xi1xi2} \phi((\xi, 1-\xi)) = \E \big[ |A_1|^{\xi \alpha_1} |A_2|^{(1-\xi)\alpha_2} \big]~<~ \big( \E [|A_1|^{\alpha_1}] \big)^{1/\xi} \big( \E [|A_2|^{\alpha_2}]\big)^{1/(1-\xi)} =1.\end{equation}
	 Considering the second assertion, we have that $D \neq \{\bb 0\}$. It then follows from \cite[Lemma 1.1]{Hoeglund1988} that $\nabla \phi(\bb \xi) \neq 0$ for all $\bb \xi \in D \cap \interior{I}$. 
	 If there is now $\bb \xi^* \in \interior{D \cap I}$ with $h(\bb \xi^*)=m$, we may parametrize $D$ locally around $\bb \xi^*$ by a $C^1$ curve  $\bb \gamma :(-\eps ,\eps ) \to D$  with $\bb \gamma (0)=\bb \xi ^*$ and $\bb \gamma '(0)\neq \bb 0$. 
	Observe that $\phi (\bb \gamma (r))=1$ and so
	\begin{equation*}
		0=\frac{d}{dr}(\phi (\bb \gamma (r)))\big |_ {r=0}=\langle \nabla \phi (\bb \gamma (0)), \bb \gamma '(0)\rangle .
	\end{equation*}
	Moreover, $r\to h(\bb \gamma (r))$ has its local maximum at $0$, hence
	\begin{equation*}
		0=\frac{d}{dr}(h (\bb \gamma (r)))\big |_ {r=0}=\langle \nabla h(\bb \gamma (0)), \bb \gamma '(0)\rangle =
		\langle (1,1), \gamma '(0)\rangle.
	\end{equation*}
	This proves that 
	$\nabla \phi (\bb \xi ^*)$ and $(1,1)$ are proportional (since both of them are perpendicular to $\bb \gamma'(0)$). But  it is proved in \cite[Lemma 1.3]{Hoeglund1988} that the mapping 
	$\bb \xi \to \frac{\nabla \phi(\bb \xi)}{\norm{\nabla \phi(\bb \xi)}}$ is one-to-one on $D \cap \interior{I}$. Hence,  $\bb \xi ^*$ is unique on $D$, and $m$ cannot be obtained for $\bb \xi \in \Delta$ with $\phi(\bb \xi)<1$ (since by increasing one or two coordinates there will still be $\bb \xi \in \Delta$ with larger value of $h$.).
	
	Turning to the third assertion, we first note that $\phi(1,1)<\infty$ implies that $[0,1]^2 \subset I$.  Under 
	Assumption \eqref{assump:positivedependence}, it holds by a Taylor expansion at $(1,0)$ that
	$$ \phi(1,h) = 1 + h \E [|A_1|^{\alpha_1} \log |A_2|] + o(h)>1 \quad \text{for }h>0,$$
	By convexity it follows that $\phi(1,\xi_2)>1$ for all $\xi _2\in (0,1]$.  Using further that $\phi(0,\xi_2)<1$ for all $\xi _2 \in (0,1)$, we obtain, using again the convexity, that for all $\xi_2 \in (0,1)$ there is a $\xi_1 \in (0,1)$ such that $\phi(\xi_1, \xi_2)=1$. 
	In the same way, for all $\xi_1 \in (0,1)$ there is $\xi_2 \in (0,1)$ such that $\phi(\xi_1, \xi_2)=1$; and $\phi(\xi_1, 1)>1$ for all $\xi_1>0$. It follows that $D \subset [0,1]^2$.
	
	We next show that $D \cap [0,1]^2$ is a connected path. By the above, we know that $D \cap (0,1)^2$ is nonempty. Note that $D \cap [0,1]^2$ is a closed set. Suppose $D$ would be separated into disjoint closed sets $D_1$, $D_2$, then  there would be endpoints $\bb \xi_1$ and $\bb \xi_2$ in $(0,1)^2$. But since $\nabla \phi \neq \bb 0$, we can employ the implicit function theorem to show that we can locally extend $D$ around $\bb \xi_1$ and $\bb \xi_2$ into a neighbourhood; which gives a contradiction. Hence $D$ is connected. 
 As a consequence of \eqref{eq:phi.xi1xi2}, $D \cap (0,1)^2$ is always above the line  $\xi_1+\xi_2=1$. 
  In particular, $h(\bb \xi)>1$ on $D \cap (0,1)^2$, hence the function $h$ attains it maximum on the compact set $D \cap [0,1]^2$ inside the square, i.e., at some value $\bb \xi^* \in D \cap (0,1)^2$. By the above considerations, $\nabla \phi(\bb \xi^*)$ is parallel to $(1,1)$. 
\end{proof}

\section{Moments}\label{sect:moments}

In this section we provide the proof of Propositon \ref{prop:m-delta} together with moment estimates that will be used in the subsequent sections. The results of this Section are, for simplicity, formulated for the two-dimensional setup, but extend without effort to the block setup, as we will explain in Theorem \ref{thm:moments:blocks} at the end of this section. 

The proof of Proposition \ref{prop:m-delta} proceeds by an application of the Markov inequality, the burden of the proof will be to prove the finiteness of suitable moments of $\bb X$.
\begin{proof}[Proof of Proposition \ref{prop:m-delta}]
Since $\bb \xi \in  [0,1)^2$ with $\phi(\bb \xi)<1$ and $[0,1]^2\subset I$ due to assumption \eqref{assump:mixedmoments}, there is $\delta>0$ such that $\bb \xi':=(\xi_1+\delta, \xi_2)$ also satisfies $\phi(\bb \xi')<1$ and $\bb \xi' \in [0,1)^2$. 
	We have
	\begin{align*}
		\P\big( |X_1|>t^{1/\alpha_1}, |X_2|>t^{1/\alpha_2} \big) &= \P\big( |X_1|^{(\xi_1+\delta)\alpha_1}>t^{\xi_1+\delta}, |X_2|^{\xi_2 \alpha_2}>t^{\xi_2} \big) \\
		&\le \P\big( |X_1|^{(\xi_1+\delta)\alpha_1} \cdot|X_2|^{\xi_2 \alpha_2}>t^{\xi_1+\delta+\xi_2} \big) \\
		&\le \E \Big[ |X_1|^{(\xi_1+\delta)\alpha_1} \cdot|X_2|^{\xi_2 \alpha_2} \Big] \cdot t^{-(\xi_1+\xi_2+\delta)}
	\end{align*}
	and the result follows once it is proved that the expectation is finite. This is the content of Theorem \ref{thm:moments} below.
\end{proof}

To obtain the required estimates for the mixed moments of $X_1$, $X_2$, it is - within this section - useful to consider the functions
\begin{equation}\label{eq:bigphi}
\bb \Phi (\bb \theta )=\E \big[ |A_1|^{\theta _1}\ |A_2|^{\theta _2}\big] 
\end{equation}\begin{equation}\label{eq:mixedmoments}
\bb \Psi (\bb \theta )	= \E \big[ |B_1|^{\theta _1}|B_2|^{\theta_2}\big]+\E \big[ |B_1|^{\theta _1}\ |A_2|^{\theta _2}\big] + 
\E\big[ |A_1|^{\theta _1} |B_2|^{\theta  _2}\big] + \bb \Phi(\bb \theta)
\end{equation}
instead of $\psi$ and $\phi$ - observe that $\psi(\bb \xi)=\bb \Psi(\bb \xi \bb \alpha)$ and $\phi(\bb \xi)=\bb \Phi(\bb \xi \bb \alpha)$. The reason will become transparent in Lemma \ref{lem:g}: we will proceed by some kind of induction on integer values with respect to $\bb \theta$; working with $\bb \xi$, we would have to consider non-integer steps.

\begin{theorem}\label{thm:moments} Consider $\bb \xi \in [0,1)^2$ with $\phi(\bb \xi)<1$. Set $\bb \theta:=\bb \xi \bb \alpha$. Suppose that $\Psi(\bb \theta )<\8$. Then
	\begin{equation*}
		\mathfrak{M}(\bb \theta):=\E \big[|X_1|^{\theta _1}|X_2|^{\theta _2} \big] = \E \big[ |X_1|^{\xi _1\a _1}|X_2|^{\xi _2\a _2}\big]<\8 .
	\end{equation*}
\end{theorem}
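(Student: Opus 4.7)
The plan is to establish the finiteness of $\mathfrak{M}(\bb\theta)$ via the iterated fixed-point equation and a double induction on the integer ceilings of the coordinates of $\bb\theta$.

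First I would introduce the forward chain $\bb X_n := \bb A_n \bb X_{n-1} + \bb B_n$ with $\bb X_0 = \bb 0$; this converges almost surely to a vector with the law of $\bb X$ by the series representation \eqref{eq:perpetuity}. By Fatou's lemma it is then enough to show $\sup_n \mathfrak{M}_n(\bb\theta) < \infty$, where $\mathfrak{M}_n(\bb\theta') := \E[|X_{n,1}|^{\theta_1'}|X_{n,2}|^{\theta_2'}]$. The engine of the proof is the recursion obtained by using independence of $\bb X_{n-1}$ from $(\bb A_n,\bb B_n)$ together with a suitable inequality for $(|A_{n,i}||X_{n-1,i}|+|B_{n,i}|)^{\theta_i'}$ in each coordinate, of the form
$$\mathfrak{M}_n(\bb\theta') \le \bb\Phi(\bb\theta')\,\mathfrak{M}_{n-1}(\bb\theta') + R_n(\bb\theta'),$$
where $R_n(\bb\theta')$ is a linear combination of mixed moments $\mathfrak{M}_{n-1}(\bb\theta'')$ with strictly smaller $\bb\theta''$ and coefficients that are $(|A|,|B|)$-moments controlled by $\bb\Psi(\bb\theta)$. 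The crucial feature is that the factor multiplying $\mathfrak{M}_{n-1}(\bb\theta')$ is exactly $\bb\Phi(\bb\theta')$, not an inflated version of it: once $\sup_n R_n(\bb\theta')<\infty$ and $\bb\Phi(\bb\theta')<1$, iteration in $n$ yields $\sup_n \mathfrak{M}_n(\bb\theta') < \infty$.

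In the base case $\bb\theta' \in [0,1]^2$ the subadditivity $(a+b)^{\theta_i'} \le a^{\theta_i'} + b^{\theta_i'}$ produces such a recursion immediately; the remainder reduces to marginal moments $\E[|X_i|^{\theta_i'}]$ (finite by Kesten--Goldie since $\theta_i' < \alpha_i$) and mixed $(|A|,|B|)$-moments at $\bb\theta' \le \bb\theta$, each dominated, via Young's inequality, by a term in $\bb\Psi(\bb\theta)$ plus finite constants. For the inductive step (some $\theta_i' > 1$), I would iterate the elementary bound $(a+b)^\tau \le a^\tau + \tau b(a+b)^{\tau-1}$ for $\tau \ge 1$ to reach a polynomial expansion of the form
$$(a+b)^\tau \le a^\tau + \sum_{k=1}^{\lfloor\tau\rfloor} c_k(\tau)\,a^{\tau-k}b^k + c^*(\tau)\,b^\tau,$$
in which the leading coefficient on $a^\tau$ is exactly $1$ and every other exponent of $a$ is reduced by a positive integer. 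Applying this factorwise for $i=1,2$, multiplying out and taking expectations, the principal contribution is $\bb\Phi(\bb\theta')\mathfrak{M}_{n-1}(\bb\theta')$, while every other term is a finite constant times $\mathfrak{M}_{n-1}(\bb\theta'')$ for some $\bb\theta''\le\bb\theta'$ with at least one coordinate strictly reduced by a positive integer. The induction on $\lceil\theta_1'\rceil+\lceil\theta_2'\rceil$ then supplies $\sup_n \mathfrak{M}_n(\bb\theta'')<\infty$ for every such remainder term.

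To close the argument one must verify $\bb\Phi(\bb\theta') < 1$ for every intermediate $\bb\theta' \in (0,\theta_1] \times (0,\theta_2]$ appearing in the induction. Setting $\bb\xi' := (\theta_1'/\alpha_1, \theta_2'/\alpha_2) \in (0,\xi_1] \times (0,\xi_2] \subset (0,1)^2$, this is immediate from Lemma~\ref{lem:properties.phi2}(1) whenever $\xi_1'+\xi_2' \le 1$; on the complementary part of the rectangle it follows from the convexity of $\phi$ combined with $\phi < 1$ at the three corner points $\bb\xi$, $(\xi_1,0)$ and $(0,\xi_2)$ (the latter two because $\xi_i<1$), by a standard maximum-principle argument. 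The chief obstacle is the careful bookkeeping of remainder terms in the polynomial expansion---precisely the reason for parametrizing moments by $\bb\theta=\bb\xi\bb\alpha$ and inducting on integer ceilings, since non-integer step sizes in the reduction would not yield a well-founded induction.
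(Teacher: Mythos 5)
Your argument is correct, and it reaches the conclusion by a genuinely different route than the paper. The paper works with the Goldie-style auxiliary kernel $G_{\bb\theta}(\bb s)=e^{\langle\bb\theta,\bb s\rangle}\big(\P(|X_i|>e^{s_i},\,i=1,2)-\P(|A_iX_i|>e^{s_i},\,i=1,2)\big)$ and establishes the potential identity $M_{\bb\theta}=G_{\bb\theta}*\mathbb{V}_{\bb\theta}$ with $\mathbb{V}_{\bb\theta}=\sum_{n\ge 1}\zeta_{\bb\theta}^{*n}$; since $\bb\Phi(\bb\theta)<1$ makes $\mathbb{V}_{\bb\theta}$ a finite measure, integrability of $G_{\bb\theta}$ gives finiteness of $\mathfrak{M}(\bb\theta)$, and integrability of $G_{\bb\theta}$ is in turn reduced (Lemma~\ref{lem:g}) to moment bounds at the reduced indices $\bb\theta'=((\theta_1-1)^+,\theta_2)$ and $\bb\theta''=(\theta_1,(\theta_2-1)^+)$. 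You instead run a direct contraction on the forward iterates: $\mathfrak{M}_n(\bb\theta')\le\bb\Phi(\bb\theta')\mathfrak{M}_{n-1}(\bb\theta')+R_n(\bb\theta')$, using that the polynomial bound $(a+b)^\tau\le a^\tau+\sum_{k=1}^{\lfloor\tau\rfloor}c_ka^{\tau-k}b^k+c^*b^\tau$ has leading coefficient exactly one, so that $\bb\Phi(\bb\theta')<1$ gives a geometric contraction with a summable remainder. Both proofs share the same inductive skeleton on $\lceil\theta_1\rceil+\lceil\theta_2\rceil$, the same base case via subadditivity, the same use of Young's inequality to press mixed $(A,B)$-moments back into $\Psi(\bb\theta)$, and the same convexity/maximum-principle argument to propagate $\bb\Phi<1$ to the intermediate rectangle (a step the paper applies somewhat tacitly; you are right that it does not follow from Lemma~\ref{lem:properties.Psi} alone and needs the corner argument you describe). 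The paper's convolution route is more aligned with the renewal-theoretic machinery deployed in the rest of the paper, while your recursion is more elementary and self-contained; both are sound.
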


The proof of Theorem \ref{thm:moments} will be based on several lemmata which we will introduce first; the proofs are given at the end of the section.

Recall that we write $\bb \theta '<\bb \theta $ if $\bb \theta '_i\leq \bb\theta _i$ for $i \in \{1,2\}$ and there is $i$ such that $\bb \theta '_i< \bb \theta _i$.
\begin{lemma}\label{lem:properties.Psi}
	Assuming \eqref{assump:alpha}, \eqref{assump:moments} and \eqref{assump:mixedmoments}, the following holds:
	$\Psi(\bb \alpha)<\infty$, and for every $\bb \theta $ with $\bb 0 < \bb \theta \le \bb \alpha$, it holds as well that $\Psi(\bb \theta)<\8$.
	
	In the same way, if $\mathfrak{M}(\bb \theta)$ (or $\bb \Phi(\bb \theta)$) is finite for some $\bb \theta \le  \bb \alpha$, then $\mathfrak{M}(\bb \theta')<\infty$ (or $\bb \Phi(\bb \theta')< \infty$) for any $\bb 0 < \bb \theta' \le  \bb \theta$.
\end{lemma}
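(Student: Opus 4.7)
My plan is to reduce all three monotonicity statements to the elementary pointwise inequality $y^{\theta'} \le 1 + y^\theta$, valid for $y\ge 0$ and $0 \le \theta' \le \theta$ (split into the cases $y \le 1$ and $y>1$). Multiplying two such bounds coordinatewise yields
$$ y_1^{\theta_1'} y_2^{\theta_2'} \le 1 + y_1^{\theta_1} + y_2^{\theta_2} + y_1^{\theta_1} y_2^{\theta_2}, \qquad y_1, y_2 \ge 0, \ 0 \le \theta_i' \le \theta_i. $$
The first assertion $\Psi(\bb \alpha)<\infty$ is then just a repackaging of assumption \eqref{assump:mixedmoments}: the four summands of $\Psi(\bb \alpha)$ in \eqref{eq:mixedmoments} are precisely the four terms appearing in \eqref{assump:mixedmoments}.

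For the monotonicity of $\Psi$ and $\bb \Phi$, I fix $\bb 0 < \bb \theta \le \bb \alpha$ and apply the product bound term by term to each summand of $\Psi(\bb \theta)$, taking $(y_1,y_2)$ to be each of $(|A_1|,|A_2|)$, $(|B_1|,|A_2|)$, $(|A_1|,|B_2|)$ and $(|B_1|,|B_2|)$ in turn. Every expectation arising on the right-hand side is either a single moment $\E[|A_i|^{\alpha_i}]$ or $\E[|B_i|^{\alpha_i}]$, or a mixed moment at level $\bb \alpha$, all finite by \eqref{assump:alpha}, \eqref{assump:moments} and \eqref{assump:mixedmoments}. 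The monotonicity of $\bb \Phi$ is the same argument with only the $\bb \Phi$-summand present, using the hypothesized finiteness of $\bb \Phi(\bb \theta)$ in place of \eqref{assump:mixedmoments}.

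The only nontrivial step is the monotonicity of $\mathfrak{M}$. Applying the product bound to $(|X_1|,|X_2|)$ gives
$$ \mathfrak{M}(\bb \theta') \le 1 + \E[|X_1|^{\theta_1}] + \E[|X_2|^{\theta_2}] + \mathfrak{M}(\bb \theta), $$
where $\mathfrak{M}(\bb \theta)$ is finite by hypothesis but the two marginal moments require a separate argument. For these I plug in the perpetuity series \eqref{eq:perpetuity} for $X_i$ and bound $\E[|X_i|^{\theta_i}]$ using subadditivity of $x \mapsto x^{\theta_i}$ (when $\theta_i \le 1$) or Minkowski's inequality (when $\theta_i > 1$); in both cases matters reduce to a geometric series in $\E[|A_i|^{\theta_i}]$ with leading coefficient $\E[|B_i|^{\theta_i}]$. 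Since $s \mapsto \log \E[|A_i|^s]$ is convex with zeros at $0$ and $\alpha_i$, the common ratio $\E[|A_i|^{\theta_i}]$ is strictly less than $1$ for $\theta_i \in (0, \alpha_i)$, which is the regime of interest (e.g.\ in the application to Theorem \ref{thm:moments}, where $\bb \theta = \bb \xi \bb \alpha$ with $\bb \xi \in [0,1)^2$), so the geometric series converges. This perpetuity-based marginal moment bound is the principal technical point of the proof.
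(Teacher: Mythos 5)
Your proof is correct, and for the central assertion (monotonicity of $\Psi$) it takes a cleaner and more elementary route than the paper. The paper establishes the finiteness of, say, $\E \big[ |B_1|^{\theta_1}|A_2|^{\theta_2}\big]$ for $\bb 0 < \bb\theta < \bb\alpha$ by two successive changes of measure: an Esscher-type tilt by $|A_2|^{\alpha_2}$, then a tilt by $|B_1|^{\theta_1}$, each time invoking the monotonicity of moments under the tilted law. That monotonicity is, in disguise, exactly your elementary inequality $y^{\theta'}\le 1+y^{\theta}$; your version applies it directly in both coordinates via the product bound $y_1^{\theta_1'}y_2^{\theta_2'}\le 1+y_1^{\theta_1}+y_2^{\theta_2}+y_1^{\theta_1}y_2^{\theta_2}$ and reads off all four terms of $\Psi(\bb\alpha)$ at once, avoiding the change-of-measure bookkeeping entirely. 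The same remark applies to the $\bb\Phi$ monotonicity. Your approach is arguably preferable for the reader, since it makes the dependence on \eqref{assump:alpha}, \eqref{assump:moments} and \eqref{assump:mixedmoments} completely transparent.

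Two small remarks. First, for the $\mathfrak{M}$ part you reprove the marginal moment bound $\E[|X_i|^{\theta_i}]<\infty$ from the perpetuity series; this is precisely the content of the paper's Lemma \ref{lem:two} (stated just below the present lemma), so the explicit Minkowski/geometric-series computation duplicates a tool that is already available. Second, your observation that the marginal moment bound needs $\theta_i<\alpha_i$ is a genuine and honest caveat: with $\theta_i=\alpha_i$ the marginal $\E[|X_i|^{\alpha_i}]$ is typically infinite by the Kesten--Goldie asymptotics \eqref{eq:tailsMarginal}, so the naive product bound $\mathfrak{M}(\bb\theta')\le 1+\E[|X_1|^{\theta_1}]+\E[|X_2|^{\theta_2}]+\mathfrak{M}(\bb\theta)$ would break down. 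The paper's laconic ``in the same way'' sidesteps this; your explicit acknowledgement that the argument lives in the regime $\theta_i<\alpha_i$, which is where the lemma is actually invoked, is a point of care rather than a defect.
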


The next two lemmata prove the assertion of Theorem \ref{thm:moments} in the special cases that a) $\bb \theta \in [0,1]^2$ or b) one of the components of $\bb \theta$ is zero.

\begin{lemma}\label{lem:one}
	Let $\bb \theta \in [0,1]^2$ be such that $\bb \theta < \bb \alpha$. Assume that $\bb \Phi(\bb \theta)<1$ and $\bb \Psi(\bb \theta)<\infty$. Then $\mathfrak{M}(\bb \theta)<\infty$. 
\end{lemma}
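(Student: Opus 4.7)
The plan is to exploit the subadditivity $|a+b|^{\theta}\le |a|^{\theta}+|b|^{\theta}$, valid for $\theta\in[0,1]$, together with the fixed-point equation $\bb X\eqdist\bb A\bb X+\bb B$, to derive a uniform bound on the truncated joint moments. Since the partial sums $\bb X_{\le N}=\sum_{k=1}^N\bb A_1\cdots\bb A_{k-1}\bb B_k$ converge almost surely to $\bb X$, it suffices by Fatou's lemma to show that $\sup_N f_N<\infty$, where $f_N:=\E[|X_{\le N,1}|^{\theta_1}|X_{\le N,2}|^{\theta_2}]$.

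First I would exploit the one-step decomposition $\bb X_{\le N+1}\eqdist \bb B+\bb A\, \bb X'_{\le N}$ with $\bb X'_{\le N}\eqdist \bb X_{\le N}$ independent of $(\bb A,\bb B)$. Applying subadditivity coordinatewise (this is where the hypothesis $\theta_i\le 1$ enters), expanding the resulting product, and invoking the independence to factor each expectation, I obtain the affine recursion
\begin{align*}
f_{N+1} &\le \bb \Phi(\bb \theta)\, f_N + \E\bigl[|A_1|^{\theta_1}|B_2|^{\theta_2}\bigr]\,\E\bigl[|X_{\le N,1}|^{\theta_1}\bigr]\\
&\quad + \E\bigl[|B_1|^{\theta_1}|A_2|^{\theta_2}\bigr]\,\E\bigl[|X_{\le N,2}|^{\theta_2}\bigr] + \E\bigl[|B_1|^{\theta_1}|B_2|^{\theta_2}\bigr].
\end{align*}

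Next I would bound the non-recursive terms uniformly in $N$. The three mixed-moment prefactors involving $\bb A$ and $\bb B$ are controlled by $\bb \Psi(\bb \theta)<\infty$. The marginal moments $\sup_N \E[|X_{\le N,i}|^{\theta_i}]$ are finite by the classical one-dimensional perpetuity estimate: since $\theta_i<\alpha_i$, strict convexity of $s\mapsto \E[|A_i|^s]$ (ensured by $\P(|A_i|=1)<1$) yields $\E[|A_i|^{\theta_i}]<1$, and together with $\E[|B_i|^{\theta_i}]<\infty$ (a consequence of \eqref{assump:moments} and $\theta_i\le\alpha_i$) the same subadditivity trick applied to each scalar partial sum delivers the uniform bound. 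Calling the resulting constant $C$, the recursion becomes $f_{N+1}\le \bb \Phi(\bb \theta)\, f_N + C$; since $\bb \Phi(\bb \theta)<1$ and $f_0=0$, iteration gives $\sup_N f_N\le C/(1-\bb \Phi(\bb \theta))<\infty$, and Fatou's lemma concludes.

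There is no real obstacle here: the argument is essentially mechanical once the subadditivity trick is in place. The genuine difficulty lies beyond the scope of this lemma, namely in the regime $\bb \theta \notin [0,1]^2$ (to be handled by the subsequent lemmas of this section), where the elementary bound $|a+b|^{\theta}\le |a|^{\theta}+|b|^{\theta}$ is no longer available and one cannot simply close a linear recursion against the factor $\bb \Phi(\bb \theta)$.
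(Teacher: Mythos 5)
Your proposal is correct, and it takes a genuinely different (though closely related) route from the paper's proof. The paper expands $\bb X$ directly into the infinite perpetuity series, applies subadditivity termwise, and then explicitly sums the resulting double series by splitting it into the three index regimes $k<\ell$, $k=\ell$, $k>\ell$, each controlled by geometric series with ratios $\bb\Phi(\bb\theta)<1$, $\bb\Phi(\theta_1,0)<1$, $\bb\Phi(0,\theta_2)<1$. You instead set up the one-step affine recursion $f_{N+1}\le \bb\Phi(\bb\theta)f_N+C$ for the truncated moments, using the decomposition $\bb X_{\le N+1}\eqdist\bb B+\bb A\bb X'_{\le N}$, and close via Fatou. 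The ingredients are identical — subadditivity for $\theta_i\le 1$, independence to factor expectations, $\bb\Psi(\bb\theta)<\infty$, the contraction $\bb\Phi(\bb\theta)<1$, and the marginal contractions $\E[|A_i|^{\theta_i}]<1$ for $0<\theta_i<\alpha_i$ — but your affine-recursion packaging avoids the bookkeeping of the three double-sum cases and is arguably the cleaner exposition; the paper's direct summation is more explicit about how each piece of $\bb\Psi$ enters. One small point both proofs leave implicit: the statement's condition $\bb\theta<\bb\alpha$ is read (here and in the paper) as strict inequality in every coordinate with $\theta_i>0$, since $\bb\Phi(\theta_1,0)<1$, resp.\ $\E[|A_1|^{\theta_1}]<1$, genuinely requires $0<\theta_1<\alpha_1$; the boundary case $\theta_i=0$ is trivial and $\theta_i=\alpha_i$ is outside the scope of the lemma as applied.
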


\begin{lemma}\label{lem:two}
	Let $\bb \theta =(\theta_1,0)$ with $0<\theta_1 <\alpha_1$ or $\bb \theta =(0,\theta_2)$ with $\theta_2<\alpha_2$. Then $\mathfrak{M}(\bb \theta)<\infty$.
\end{lemma}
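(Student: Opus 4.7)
Observe that with $\bb \theta = (\theta_1, 0)$ we have $\mathfrak{M}(\bb \theta) = \E[|X_1|^{\theta_1} \cdot |X_2|^0] = \E[|X_1|^{\theta_1}]$ (by convention $|x|^0 = 1$), and analogously for $\bb \theta = (0,\theta_2)$. So the claim reduces to the classical statement that the univariate perpetuity $X_i$ has finite moments of every order strictly below $\alpha_i$. I will prove this for $i=1$; the other case is symmetric.

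The starting point is the perpetuity representation \eqref{eq:perpetuity},
$X_1 = \sum_{k \ge 1} A_{1,1} \cdots A_{k-1,1} B_{k,1}$ (empty products equal to $1$),
together with the two ingredients:
(i) $\E[|A_1|^{\theta_1}] < 1$ for every $\theta_1 \in (0,\alpha_1)$, which follows from strict convexity of $s \mapsto \E[|A_1|^s]$, the values $\E[|A_1|^0] = \E[|A_1|^{\alpha_1}] = 1$, and \eqref{eq:logAi} (strict convexity being a consequence of $\P(|A_1|=1)<1$ from \eqref{assump:alpha});
(ii) $\E[|B_1|^{\theta_1}] < \infty$, which follows from $\E[|B_1|^{\alpha_1}] < \infty$ in \eqref{assump:moments} and Jensen's inequality applied to the concave map $t \mapsto t^{\theta_1/\alpha_1}$.

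I then split into two cases. For $\theta_1 \in (0,1]$, the inequality $|a+b|^{\theta_1} \le |a|^{\theta_1} + |b|^{\theta_1}$ gives, by monotone convergence and independence of $(A_{j,1})_{j < k}$ from $B_{k,1}$,
\begin{equation*}
\E[|X_1|^{\theta_1}] \le \sum_{k=1}^\infty \E[|A_1|^{\theta_1}]^{k-1} \, \E[|B_1|^{\theta_1}] = \frac{\E[|B_1|^{\theta_1}]}{1 - \E[|A_1|^{\theta_1}]} < \infty.
\end{equation*}
For $\theta_1 \in (1,\alpha_1)$, I apply Minkowski's inequality in $L^{\theta_1}$:
\begin{equation*}
\bigl(\E[|X_1|^{\theta_1}]\bigr)^{1/\theta_1} \le \sum_{k=1}^\infty \E[|A_1|^{\theta_1}]^{(k-1)/\theta_1} \bigl(\E[|B_1|^{\theta_1}]\bigr)^{1/\theta_1},
\end{equation*}
which is again a convergent geometric series by (i) and (ii).

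\textbf{Anticipated difficulty.} There is essentially none; this is a textbook fact for affine perpetuities going back to \cite{Goldie1991}. The only thing to check is that the strictness $\theta_1 < \alpha_1$ is used precisely at step (i), where strict convexity of $s \mapsto \E[|A_1|^s]$ is needed to ensure $\E[|A_1|^{\theta_1}] < 1$ rather than just $\le 1$; and that the bound on $\E[|B_1|^{\theta_1}]$ in step (ii) does not require the mixed moment assumption \eqref{assump:mixedmoments}, only the marginal assumption \eqref{assump:moments}, which is consistent with the lemma's hypotheses not invoking $\bb\Psi(\bb\theta) < \infty$.
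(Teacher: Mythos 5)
Your proof is correct. Note, however, that the paper does not actually give its own proof of this lemma: it remarks that the claim follows immediately from the marginal tail asymptotics \eqref{eq.intro.marginal.tails} (if $\P(|X_i|>s)\sim c_i s^{-\alpha_i}$ then $\E[|X_i|^{\theta_i}]<\infty$ for $\theta_i<\alpha_i$), and otherwise points the reader to \cite[Lemma 2.3.1]{Buraczewski2016} for an elementary argument. What you wrote is exactly that elementary argument: reduce to a univariate moment, observe $\E[|A_1|^{\theta_1}]<1$ by strict convexity, bound $\E[|B_1|^{\theta_1}]$ by Jensen from \eqref{assump:moments}, and control the perpetuity series by subadditivity of $x\mapsto x^{\theta_1}$ when $\theta_1\le 1$ and by Minkowski in $L^{\theta_1}$ when $\theta_1>1$. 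The trade-off is standard: the route via \eqref{eq.intro.marginal.tails} is one line but invokes the full Kesten--Goldie theorem (hence also the nonarithmeticity assumption), while your direct computation is entirely self-contained and uses only \eqref{assump:alpha} and \eqref{assump:moments}, which is cleaner given that Lemma \ref{lem:two} is itself used as a building block in the moment bounds of this section.
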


In other words, Lemma \ref{lem:two}  states that $\E \big[ \abs{X_i}^{\theta_i}\big]<\infty$ for all $\xi_i < \alpha_i$. This follows directly from \eqref{eq.intro.marginal.tails}, but can also proved in an elementary way, see e.g. \cite[Lemma 2.3.1]{Buraczewski2016}.

If $\theta_1 >1$ or $\theta_2 >1$, then we will consider the auxiliary function
$$
G_{\bb \theta}(\bb s)= e^{\langle \bb \theta , \bb s\rangle } \big(\P( \left |X_i|> e^{s_i}, i=1,2\right ) - \P \left ( |A_iX_i|> e^{s_i}, i=1,2\right ) \big),$$
where $A_i$ and $X_i$ are chosen to be independent of each other.

\begin{lemma}\label{lem:three}\label{lem:gtheta.implies.Mtheta} We have the following implications.
	\begin{enumerate}
		\item 
		If $\mathfrak{M}(\bb \theta)<\infty$ and $\bb \Phi(\bb \theta)<\8$, then $G_{\bb \theta} \in L^1(\R^2)$.
		\item If  $G_{\bb \theta} \in L^1(\R^2)$ and $\bb \Phi(\bb \theta)<1$, 
		then  $\mathfrak{M}(\bb \theta)<\infty$.
	\end{enumerate}
\end{lemma}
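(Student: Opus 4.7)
By Fubini--Tonelli with the change of variables $t_i=e^{s_i}$, one has
$\int G_{\bb\theta}^+\,d\bb s = \mathfrak{M}(\bb\theta)/(\theta_1\theta_2)$, where $G_{\bb\theta}^+(\bb s) := e^{\skalar{\bb\theta,\bb s}} \P(|X_i|>e^{s_i},\, i=1,2)$; conditioning on $\bb A$ and using the independence of $\bb A$ and $\bb X$ gives $\int G_{\bb\theta}^-\,d\bb s = \bb\Phi(\bb\theta)\mathfrak{M}(\bb\theta)/(\theta_1\theta_2)$ with $G_{\bb\theta}^-(\bb s) := e^{\skalar{\bb\theta,\bb s}} \P(|A_iX_i|>e^{s_i},\, i=1,2)$. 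Both are finite by the hypotheses of (1), so $G_{\bb\theta} = G_{\bb\theta}^+ - G_{\bb\theta}^- \in L^1(\R^2)$.

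\textbf{Part (2), key identity.} A short computation (conditioning on $\bb A$ and then changing variables) yields the pointwise convolution identity $G_{\bb\theta}^-(\bb s) = (G_{\bb\theta}^+ * \wt\mu_{\bb\theta})(\bb s)$, where
\begin{equation*}
\wt\mu_{\bb\theta}(d\bb u) := e^{\skalar{\bb\theta,\bb u}} \,\mathrm{Law}(\log|A_1|,\log|A_2|)(d\bb u)
\end{equation*}
is a positive finite measure on $\R^2$ of total mass $\bb\Phi(\bb\theta) \in (0,1)$. The hypothesis then reads $G_{\bb\theta}^+ - G_{\bb\theta}^+ * \wt\mu_{\bb\theta} \in L^1(\R^2)$.

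\textbf{Truncation and uniform bound.} I would introduce the truncation $X_i^{(M)} := |X_i| \wedge M$ and the analog $G_{\bb\theta}^{(M)}$ (using $X^{(M)}$ in place of $|X|$ and $|A_i|X_i^{(M)}$ in place of $|A_iX_i|$); since $X^{(M)}$ is bounded, $G_{\bb\theta}^{(M)} \in L^1$ with
\[
(1-\bb\Phi(\bb\theta))\,\mathfrak{M}_M/(\theta_1\theta_2) \;=\; \int G_{\bb\theta}^{(M)}\,d\bb s, \qquad \mathfrak{M}_M := \E[(X_1^{(M)})^{\theta_1}(X_2^{(M)})^{\theta_2}],
\]
and $\mathfrak{M}_M \uparrow \mathfrak{M}(\bb\theta)$ by monotone convergence. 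A direct check gives the pointwise identity $G_{\bb\theta} - G_{\bb\theta}^{(M)} = H_M - H_M * \wt\mu_{\bb\theta}$ with $H_M := G_{\bb\theta}^+ \mathbf{1}_{\{\max(s_1,s_2) \geq \log M\}} \geq 0$. Using the contraction $\|T_{\wt\mu_{\bb\theta}}\|_{L^1\to L^1} = \bb\Phi(\bb\theta) < 1$, a Neumann-series argument identifies $H_M$ with $\sum_{n \geq 0}(G_{\bb\theta}-G_{\bb\theta}^{(M)})*\wt\mu_{\bb\theta}^{*n}$ and yields the uniform-in-$M$ bound $\|H_M\|_1 \leq \|G_{\bb\theta}\|_1/(1-\bb\Phi(\bb\theta))$. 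Combined with $\int G_{\bb\theta}^{(M),+} = \mathfrak{M}_M/(\theta_1\theta_2)$ and the decomposition $G_{\bb\theta}^+ = G_{\bb\theta}^{(M),+} + H_M$, this yields $\mathfrak{M}_M \leq \theta_1\theta_2\|G_{\bb\theta}\|_1/(1-\bb\Phi(\bb\theta))$ uniformly in $M$; monotone convergence then gives $\mathfrak{M}(\bb\theta) < \infty$.

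\textbf{Main obstacle.} The delicate step is the Neumann-series inversion. The convolution operator $T_{\wt\mu_{\bb\theta}}$ admits non-integrable, non-negative eigenfunctions of eigenvalue one --- namely $\bb s \mapsto e^{\skalar{\bb\lambda,\bb s}}$ whenever $\bb\Phi(\bb\theta - \bb\lambda) = 1$, such as $e^{\skalar{\bb\theta,\bb s}}$ itself (corresponding to $\bb\lambda = \bb\theta$) --- so the naive implication ``$h \geq 0$ with $(I - T_{\wt\mu_{\bb\theta}})h \in L^1$ implies $h \in L^1$'' is false in general. To exclude such phantom contributions to $H_M$, the argument must exploit the vanishing of $H_M$ on the half-space $\{\max(s_1,s_2) < \log M\}$ together with the a priori pointwise bound $G_{\bb\theta}^+(\bb s) \leq e^{\skalar{\bb\theta,\bb s}}$ coming from $\P(\cdots) \leq 1$, ensuring that the Neumann series converges to $H_M$ rather than to a nontrivial harmonic perturbation.
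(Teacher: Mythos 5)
Your Part (1) and your key convolution identity $G_{\bb\theta}^- = G_{\bb\theta}^+ * \wt\mu_{\bb\theta}$ match the paper's argument; also, in the paper's notation $G_{\bb\theta}^+$ is $M_{\bb\theta}$ and your $\wt\mu_{\bb\theta}$ is the measure $\zeta_{\bb\theta}$.

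For Part (2), you correctly identify the only genuinely delicate step, and you leave it as an admitted gap. The paper avoids the truncation $X^{(M)}$ altogether and works directly with $M_{\bb\theta}=G_{\bb\theta}^+$: telescoping gives
$M_{\bb\theta} - M_{\bb\theta}*\zeta_{\bb\theta}^{N} = \sum_{n=0}^{N-1} G_{\bb\theta}*\zeta_{\bb\theta}^{n}$,
and the Neumann series converges to $M_{\bb\theta}$ precisely because $M_{\bb\theta}*\zeta_{\bb\theta}^N(\bb s)\to 0$ pointwise. The mechanism for that vanishing is important: the tilting in $\zeta_{\bb\theta}$ cancels against the tilting in $M_{\bb\theta}(\bb s-\cdot)$, leaving $e^{\langle\bb\theta,\bb s\rangle}\,\E\big[M\big(\bb s - \sum_{i=1}^N\bb W_i\big)\big]$ where $\bb W_i=(\log|A_{i,1}|,\log|A_{i,2}|)$ is \emph{not} tilted and hence has negative drift in both coordinates by \eqref{eq:logAi}; since $M$ is bounded by $1$ and tends to $0$ when both arguments go to $+\infty$, dominated convergence gives the limit $0$. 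This is cleaner than your proposed fix via the support restriction $\{\max(s_1,s_2)\ge\log M\}$, which does not by itself control the direction of drift of the \emph{tilted} walk; you would still need exactly the cancellation-of-tilting observation to close the argument. Once pointwise convergence of the series is in hand, the paper concludes via $M_{\bb\theta}\le |G_{\bb\theta}|*\mathbb{V}_{\bb\theta}$, and the right side is integrable because $\mathbb{V}_{\bb\theta}=\sum_{n\ge0}\zeta_{\bb\theta}^{n}$ is a finite measure (total mass $(1-\bb\Phi(\bb\theta))^{-1}$).

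Two further caveats in your write-up: the claimed inequality $\|H_M\|_1 \le \|G_{\bb\theta}\|_1/(1-\bb\Phi(\bb\theta))$ does not follow from the Neumann bound, which gives $\|H_M\|_1\le\|G_{\bb\theta}-G_{\bb\theta}^{(M)}\|_1/(1-\bb\Phi(\bb\theta))$, and $\|G_{\bb\theta}-G_{\bb\theta}^{(M)}\|_1$ is not a priori bounded by $\|G_{\bb\theta}\|_1$ (the term $H_M*\wt\mu_{\bb\theta}$ is not supported in $\{\max(s_1,s_2)\ge\log M\}$). Also, passing from $\int\phi_M=\int G_{\bb\theta}-\int G_{\bb\theta}^{(M)}$ to $\int\phi_M=(1-\bb\Phi(\bb\theta))\int H_M$ already presupposes $\int H_M<\infty$, which is the thing you are trying to prove; so this chain is circular unless the Neumann-series step is completed first. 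In short: the truncation is an unnecessary detour, and the one step you flag as ``must be exploited'' is exactly the step you need to prove, by the cancellation/drift argument above.
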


The crucial property of $G_{\bb \theta}$ is that we can in turn conclude its integrability already if we know the  the finiteness of $\mathfrak{M}(\bb \theta')$ for $ \bb \theta' < \bb \theta$, plus some moment assumptions on $\bb A$ and $\bb B$.

\begin{lemma}\label{lem:g} For $\bb \theta \in \R^2_+$, denote $\bb \theta '=((\theta _1-1)^+,\theta _2) $, 
	$\bb \theta ''=(\theta _1,(\theta _2-1)^+) $.
	Assume $\Psi(\bb \theta )<\8 $, $\bb \Phi (\bb \theta)<\8$ and that  $\mathfrak{M}(\bb \theta')<\infty$ and $\mathfrak{M}(\bb \theta'')<\infty$.   Then $G_{\bb \theta} \in L^1(\R^2)$.  
\end{lemma}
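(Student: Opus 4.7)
The plan is to exploit the stochastic fixed-point equation $\bfX \eqdist \bfA\bfX + \bfB$ (with $(\bfA,\bfB)$ independent of $\bfX$) to rewrite
\begin{equation*}
	 G_{\bb \theta}(\bb s) = e^{\skalar{\bb \theta, \bb s}}\big( \P(E_1\cap E_2) - \P(F_1\cap F_2) \big), \quad E_i := \{|A_iX_i+B_i| > e^{s_i}\},\ F_i := \{|A_iX_i| > e^{s_i}\},
\end{equation*}
and then to apply the telescoping identity $\mathbf{1}_{E_1\cap E_2} - \mathbf{1}_{F_1\cap F_2} = (\mathbf{1}_{E_1}-\mathbf{1}_{F_1})\mathbf{1}_{E_2} + \mathbf{1}_{F_1}(\mathbf{1}_{E_2}-\mathbf{1}_{F_2})$. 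Since $\big||A_iX_i+B_i|-|A_iX_i|\big|\le |B_i|$, one has the pointwise inclusion $E_i\triangle F_i \subset \{|A_iX_i| \in ((e^{s_i}-|B_i|)^+, e^{s_i}+|B_i|)\}$, and therefore
\begin{equation*}
	|G_{\bb \theta}(\bb s)| \le e^{\skalar{\bb \theta, \bb s}}\, \big(\E [\mathbf{1}_{E_1\triangle F_1}\mathbf{1}_{E_2}] + \E[\mathbf{1}_{F_1}\mathbf{1}_{E_2\triangle F_2}]\big).
\end{equation*}

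Next, I would integrate $|G_{\bb \theta}(\bb s)|$ over $\bb s\in\R^2$, swap expectation and integration by Fubini, and compute the one-dimensional $s_i$-integrals in closed form. After substituting $u=e^{s_i}$ one obtains $\int e^{\theta_i s_i}\mathbf{1}_{E_i\triangle F_i}\,ds_i = \frac{1}{\theta_i}\big[(|A_iX_i|+|B_i|)^{\theta_i} - (|A_iX_i|-|B_i|)_+^{\theta_i}\big]$, while $\int e^{\theta_i s_i}\mathbf{1}_{E_i}\,ds_i = \frac{1}{\theta_i} |A_iX_i+B_i|^{\theta_i}$ and $\int e^{\theta_i s_i}\mathbf{1}_{F_i}\,ds_i = \frac{1}{\theta_i} |A_iX_i|^{\theta_i}$. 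Thus $\int |G_{\bb \theta}(\bb s)|\,d\bb s$ is dominated by a sum of two expectations of products of these closed-form quantities.

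Elementary inequalities then reduce matters to finitely many mixed moments: for $\theta\ge 1$ the mean-value theorem yields $(a+b)^{\theta}-(a-b)_+^{\theta}\le C_\theta\,(b\,a^{\theta-1}+b^{\theta})$, while for $\theta<1$ the subadditivity of $x\mapsto x^\theta$ yields $(a+b)^{\theta}-(a-b)_+^{\theta}\le 2^\theta b^{\theta}$; in both regimes $|a+b|^{\theta}\le C_\theta(a^{\theta}+b^{\theta})$. Expanding the two products gives finitely many terms of the form $|X_1|^{p_1}|X_2|^{p_2}\cdot|A_1|^{q_1}|B_1|^{r_1}|A_2|^{q_2}|B_2|^{r_2}$. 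Since $\bfX$ is independent of $(\bfA,\bfB)$, each such term factors as an $\bfX$-moment times an $(\bfA,\bfB)$-moment. The $\bfX$-moments arising are $\mathfrak{M}(\bb\theta')$, $\mathfrak{M}(\bb\theta'')$, or lower-order marginals $\E|X_i|^{\theta_i}$, which are finite by the monotonicity part of Lemma \ref{lem:properties.Psi} applied to $\mathfrak{M}(\bb\theta')$ (when $\theta_1\ge 1$) or $\mathfrak{M}(\bb\theta'')$ (when $\theta_2\ge 1$). The $(\bfA,\bfB)$-factors have total exponent at most $\theta_i$ in each coordinate; cross terms like $\E[|A_1|^{\theta_1-1}|B_1||A_2|^{\theta_2}]$ are reduced via Young's inequality to $\E[(|A_1|^{\theta_1}+|B_1|^{\theta_1})|A_2|^{\theta_2}]$, which is controlled by $\Psi(\bb\theta)+\bb\Phi(\bb\theta)<\infty$.

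The main difficulty will be the bookkeeping: one has to check case by case, depending on whether $\theta_i\ge 1$ or $\theta_i<1$ (which changes $(\theta_i-1)^+$), that every cross term produced by the expansion is majorised by one of the four hypothesised moment expressions. The second summand $\E[\mathbf{1}_{F_1}\mathbf{1}_{E_2\triangle F_2}]$ is handled symmetrically by swapping the roles of the indices $1$ and $2$ and using $\mathfrak{M}(\bb\theta'')$ in place of $\mathfrak{M}(\bb\theta')$. Once every term is bounded in this way, $G_{\bb\theta}\in L^1(\R^2)$ follows.
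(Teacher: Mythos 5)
Your proposal follows essentially the same route as the paper's proof: exploit the fixed-point equation, split the difference of joint-event probabilities via an intermediate event (you use $F_1\cap E_2$, the paper uses $E_1\cap F_2$ — an immaterial choice), apply Fubini to reduce to one-dimensional integrals of $e^{\theta_i s_i}$ over the symmetric-difference region, and then control the resulting mixed moments through a case analysis on $\theta_i\gtrless 1$. The only cosmetic difference is that you enlarge $E_i\triangle F_i$ to the interval $\{(e^{s_i}-|B_i|)^+ < |A_iX_i| < e^{s_i}+|B_i|\}$, obtaining $(|A_iX_i|+|B_i|)^{\theta_i}-(|A_iX_i|-|B_i|)_+^{\theta_i}$, while the paper integrates the symmetric-difference indicator directly and gets the tighter $\big||A_iX_i+B_i|^{\theta_i}-|A_iX_i|^{\theta_i}\big|$; both reduce to the same collection of moment conditions.
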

\begin{remark}
	Observe, that Lemma \ref{lem:g} does {\em not} require $\bb \Phi (\bb \theta)\leq 1$.
\end{remark}

Now we can give the proof of Theorem \ref{thm:moments}.

\begin{proof}[Proof of Theorem \ref{thm:moments}]
	If $\theta _1=0 $ or $\theta _2=0$ then $\mathfrak{M}(\bb \theta)$ is finite by Lemma \ref{lem:two}. Hence we assume that both $\theta_1, \theta_2>0$.

	By assumption, $\bb \Phi(\bb \theta)<1$. Then, by Lemma \ref{lem:gtheta.implies.Mtheta}, $\mathfrak{M}(\bb \theta)<\infty$ follows once we prove that $G_{\bb \theta} \in L^1(\R^2)$. 
	
		Define $|\bb \theta |= \lceil \theta _1\rceil + \lceil \theta _2\rceil$. 
	We will prove the integrability of $G_{\bb \theta}$ by induction on $|\bb \theta |$. 
	
	If $\theta_1, \theta_2>0$, then $|\bb \theta |$ is at least 2, in which case  
	$\theta _1, \theta _2\leq 1$ and $G_{\bb \theta}$ is integrable by Lemma \ref{lem:one}. This is the base case.
	
	Suppose for the induction step that $|\bb \theta | = m\geq 3$. Then  either $|\bb \theta '| = |\bb \theta ''|= m-1$ or $\theta '_1=0, |\bb \theta ''|= m-1$ or $\theta '_2=0, |\bb \theta '|= m-1$.  It holds that $\bb \Phi (\bb \theta '), \bb \Phi (\bb \theta '')<1$ as well as $\Psi (\bb \theta '), \Psi (\bb \theta '')<\8 $ by Lemma \ref{lem:properties.Psi}. Further, by the induction assumption, $G_{\bb \theta '}, G_{\bb \theta ''}$ are integrable.
	Hence by Lemma \ref{lem:gtheta.implies.Mtheta}, $\mathfrak{M}(\bb \theta')< \infty$ and $\mathfrak{M}(\bb \theta'')<\8$. In conclusion, the assumptions of Lemma \ref{lem:g} are satisfied and the integrability of $G_{\bb \theta}$ follows.
\end{proof}

\subsection{Proofs of the Lemmata}

\begin{proof}[Proof of Lemma \ref{lem:properties.Psi}]
	The finiteness of $\Psi(\bb \alpha)$ is asserted by assumption \eqref{assump:mixedmoments}. We prove that $\E \big[|B_1|^{\theta_1} |A_2|^{\theta_2}\big]< \infty$ for any $\bb 0 <\bb \theta < \bb \alpha$, the remaining part of the proof is along similar lines. By assumption \eqref{assump:alpha}, $\E \big[ |A_2|^{\alpha_2}\big]<\infty$ and hence $\E \big[ |A_2|^{\theta_2} \big] < \infty$ for any $0 \le\theta_2<\alpha_2$. This allows to consider a new probability measure $\widetilde{P}$ such that for any measurable set $C$
	$$ \widetilde{\P}(C):= \frac{\E \big[ \bb 1_{C} |A_2|^{\alpha_2} \big]}{\E \big[ |A_2|^{\alpha_2} \big]}$$ 
	By assumption, $\Psi(\bb \alpha)<\8$ and hence $\widetilde{\E}[|B_1|^{\alpha_1}] < \infty$. It follows that for any $0\le \theta_1 \le \alpha_1$, $\widetilde{\E}[|B_1|^{\theta_1}] < \infty$. This gives in turn the finiteness of $\E \big[|B_1|^{\theta_1} |A_2|^{\alpha_2}\big]=\E \big[ |A_2|^{\alpha_2} \big] \cdot \widetilde{\E}[|B_1|^{\theta_1}]$.
	
	Now we use that by assumption \eqref{assump:moments}$, \E \big[|B_1|^{\alpha_1}\big]<\infty$, and hence $\E \big[|B_1|^{\theta_1}\big]<\8$ for all $0 \le \theta_1 \le \alpha_1$. Considering now 
	$$ \widehat{\P}(C):= \frac{\E \big[ \bb 1_{C} |B_1|^{\theta_1} \big]}{\E \big[ |B_1|^{\theta_1} \big]},$$
	we can deduce from the finiteness of $\widehat{\E}\big[ |A_2|^{\alpha_2}\big]$ the finiteness of $\widehat{\E}\big[ |A_2|^{\theta_2}\big]$ for any $0 \le \theta_2\le \alpha_2$, and conclude that 
	$$ \E \big[|B_1|^{\theta_1} |A_2|^{\theta_2}\big] = \widehat{\E}\big[ |A_2|^{\theta_2}\big] \cdot \E \big[ |B_1|^{\theta_1} \big] < \infty. $$
\end{proof}

\begin{proof}[Proof of Lemma \ref{lem:one}]
	Recalling \eqref{eq:perpetuity}, we have, using the subadditivity of $|\cdot|^\theta$ for $\theta<1$,
	\begin{align}
		\E \big[|X_1|^{\theta _1}|X_2|^{\theta _2} \big]& \le \E\bigg[  \Big|\sum_{k=0}^\infty \abs{A_{1,1}} \cdots \abs{A_{k-1,1}} \cdot \abs{B_{k,1}}\Big|^{\theta _1} \cdot 
		\Big|\sum_{\ell=0}^\infty \abs{A_{1,2}} \cdots \abs{A_{\ell-1,2}} \cdot \abs{B_{\ell,2}}\Big|^{\theta _2} \bigg] \notag \\
		& \le \E \bigg[ \Big( \sum_{k=0}^\infty \abs{A_{1,1}}^{\theta _1} \cdots \abs{A_{k-1,1}}^{\theta _1} \cdot \abs{B_{k,1}}^{\theta _1} \Big) \cdot 
		\Big(\sum_{\ell=0}^\infty \abs{A_{1,2}}^{\theta _2} \cdots \abs{A_{\ell-1,2}}^{\theta _2} \cdot \abs{B_{\ell,2}}^{\theta _2} \Big)\bigg] \notag \\
		& =   \sum_{k,\ell=0}^\infty \E \big[\abs{A_{1,1}}^{\theta _1} \cdots \abs{A_{k-1,1}}^{\theta _1} \cdot \abs{B_{k,1}}^{\theta _1}  \abs{A_{1,2}}^{\theta _2} \cdots \abs{A_{\ell-1,2}}^{\theta _2} \cdot \abs{B_{\ell,2}}^{\theta _2} \big] \notag \\
		& = \sum_{0 \le k < \ell < \infty} + \sum_{0 \le k = \ell <\infty} + \sum_{0\le \ell < k < \infty} \label{eq:bound.theta.less.1.one}
	\end{align}
	We bound the three sums in \eqref{eq:bound.theta.less.1.one} separately, starting with the first one. It equals
	\begin{align*}
		&\sum _{0 \le k < \ell < \infty}\E \left[ \left(\Pi _{i=1}^{k-1}   \abs{A_{i,1}}^{\theta _1}\abs{A_{i,2}}^{\theta _2} \right) |B_{k,1}|^{\theta _1}\abs{A_{k,2}}^{\theta _2}\left (\Pi _{i=k+1}^{\ell-1} \abs{A_{k,2}}^{\theta _2}\right )|B_{\ell,2}|^{\theta _2}\right]\\
		=&\sum _{0 \le k < \ell < \infty}  \Big(\Pi _{i=1}^{k-1}   \E\big[\abs{A_{i,1}}^{\theta _1}\abs{A_{i,2}}^{\theta _2}\big] \Big) \E \big[|B_{k,1}|^{\theta _1}\abs{A_{k,2}}^{\theta _2}\big] \Big(\Pi _{i=k+1}^{\ell-1} \E\big[\abs{A_{k,2}}^{\theta _2}\big]\Big) \E \big[|B_{\ell,2}|^{\theta _2}\big]\\
		\leq& \sum _{0 \le k < \ell < \infty} \bb \Phi (\bb \theta)^{k-1} \cdot \Psi(\bb \theta) \cdot \bb\Phi (0,\theta _2 )^{\ell-k-2} \cdot \Psi(0,\theta_2) 
		\leq  \frac{\Psi(\bb \theta) \Psi(0,\theta_2)}{\bb \Phi(\bb \theta) \bb \Phi(0, \theta_2)} \frac{1}{1-\bb \Phi(\bb \theta)} \frac{1}{1-\bb \Phi(0,\theta_2)} < \infty
	\end{align*}
	Note that $\bb \Phi(0,\theta_2)=\E \big[ |A_2|^{\theta_2}<1$ for every $0<\theta_2<\alpha_2$ due to convexity.
		The third sum in \eqref{eq:bound.theta.less.1.one} is estimated in the same way, using $\Psi(\theta_1,0)<\infty$ and $\bb \Phi(\theta_1,0)<1$ instead. Turning to the second sum in \eqref{eq:bound.theta.less.1.one}, it equals
	\begin{align*}
		&\sum _{0 \le k  <\infty} \left ( \Pi _{i=1}^{k-1} \E \left[  \abs{A_{i,1}}^{\theta _1}\abs{A_{i,2}}^{\theta _2}\right]\right ) \E \big[ |B_{k,1}|^{\theta _1}|B_{k,2}|^{\theta _2} \big] \leq   \sum _{0 \le k  <\infty} \bb \Phi (\bb \theta) ^{k-1} \cdot \Psi(\bb \theta) <\8
	\end{align*}
	This finishes the proof of the Lemma.
\end{proof}

\begin{proof}[Proof of Lemma \ref{lem:three}]
		Let us abbreviate
	$$	M(\bb s)=  \P \left (|X_i|> e^{s_i}, i=1,2\right ), \qquad M_{\bb \theta}(\bb s)= e^{\langle \bb \theta , \bb s\rangle}  \P \left (|X_i|> e^{s_i}, i=1,2\right ) = e^{\langle \bb \theta , \bb s\rangle} M(\bb s)$$
	By a standard application of Fubini's theorem, we have
	\begin{align*}
		\int _{\R ^2} M_{\bb \theta} (\bb s)\ d\bb s&=\int _{\R ^2}e^{\langle \bb  \theta , \bb s\rangle }\E \big[ \Pi _{i=1}^2\Ind {\{|X_i|> e^{s_i}\}} \big] d\bb s =\E \big[ \int _{\R ^2} \Pi _{i=1}^2e^{\theta _i s_i }\Ind {\{|X_i|> e^{s_i}\}} d \bb s \big]\\
		&= (\theta _1\theta _2)^{-1}\E \Big[ |X_1|^{\theta _1}|X_2|^{\theta _2} \Big] \\
		&= (\theta _1\theta _2)^{-1}  \mathfrak{M}(\bb \theta) 
	\end{align*}
	 Thus we have the equivalence $\mathfrak{M}(\bb \theta)<\infty \Leftrightarrow M_{\bb \theta} \in L^1(\R^2)$.
	
	Writing $\zeta$ for the law of $\bb W=(\log |A_1|, \log |A_2|)$ and $\zeta_{\bb \theta}(d \bb w):=e^{\skalar{\bb \theta, \bb w}}(d \bb w)$ we claim that 
	\begin{equation}\label{eq:lem:three:a}
	G_{\bb \theta}(\mathbf{s})=M_{\bb \theta}(\mathbf{s})-M_{\bb \theta} * \zeta_{\bb \theta} (\bb s) = M_{\bb \theta}(\mathbf{s})-\E \big[ e^{\skalar{\bb \theta, \bb W}} M_{\bb \theta}(\mathbf{s}-\bb W) \big]	
	\end{equation}
	Then, if $M_{\bb \theta }$ is integrable and $\zeta_{\bb \theta}(\R^2)=\bb \Phi(\bb \theta)$ is finite then $G_{\bb \theta }$ is integrable; and the first assertion follows.
	But
	\begin{align*}
	M_{\bb \theta} * \zeta_{\bb \theta} (\bb s) =&~ \int_{\R^2} e^{\skalar{\bb \theta, \bb s-\bb w}} \P ( \left |X_i|> e^{s_i-w_i}, i=1,2\right ) e^{\skalar{\bb \theta, \bb w}} \zeta(d \bb w) \\
	=&~ e^{\skalar{\bb \theta, \bb s}} \int_{\R^2} \P ( \left |e^{w_i}X_i|> e^{s_i}, i=1,2\right ) \zeta(d \bb w) = e^{\skalar{\bb \theta, \bb s}}  \P ( \left |A_iX_i|> e^{s_i}, i=1,2\right ) 
	\end{align*}
	with $(|A_1|, |A_2|)$ being independent of $(|X_1|, |X_2|)$. This proves \eqref{eq:lem:three:a}.
Therefore, if $M_{\bb \theta }$ is integrable and $\mu _{\theta}$ is finite then $G_{\theta }$ is integrable. This proves the first assertion.

Turning to the second assertion, suppose now that $G_{\bb \theta }$ is integrable and $\zeta _{\bb \theta}(\R^2)=\bb \Phi(\bb \theta)<1$, that is, $\zeta_{\bb \theta}$ is a strictly subprobability measure.  
Then $\mathbb{V}_{\bb \theta}= \sum _{n=1}^{\8}   \zeta _{\bb \theta}^n$ is a finite measure and $  G_{\bb \theta}*\mathbb{V} _{\bb \theta}$ is integrable. The assertion  follows if we can prove that
\begin{equation}\label{eq:potential}
	M_{\bb \theta}=G_{\bb \theta}*\mathbb{V} _{\bb \theta}.\end{equation}

Indeed, on one hand
$$
\sum _{n=1}^N \left (M_{\bb \theta}*\zeta_{\bb \theta} ^{n-1}(\mathbf{s}) - M_{\bb \theta}*\zeta_{\bb \theta} ^n(\mathbf{s})\right )= M_{\bb \theta}(\mathbf{s}) - M_{\bb \theta}*\zeta_{\bb \theta} ^N(\mathbf{s})$$
and on the other
$$
\sum _{n=1}^N \left (M_{\bb \theta}*\zeta_{\bb \theta} ^{n-1}(\mathbf{s}) - M_{\bb \theta}*\zeta_{\bb \theta} ^n(\mathbf{s})\right ) = \sum _{n=1}^N \left (M_{\bb \theta}* - M_{\bb \theta}*\zeta_{\bb \theta} \right )* \zeta_{\bb \theta}^{n-1}(\bb s) = \sum _{n=0}^{N-1}  G_{\bb \theta} * \zeta_{\bb \theta} ^n(\mathbf{s}).$$
But
$$
\lim_{N \to \infty} M_{\bb \theta}*\zeta_{\bb \theta} ^N(\mathbf{s})= \lim_{N \to \infty} e^{\skalar{\bb \theta, \bb s}} \E \big[ M(\bb s - \sum_{i=1}^N \bb W_i)] =0 
$$
for every $\bb s \in \R^2$, using dominated convergence: $M$ is bounded with $\lim_{|\bb r| \to \infty} M(\bb r)=0$, and  $\lim_{N \to \infty} (\sum_{i=1}^N \bb W_i)_j=-\infty$ a.s. for both $j \in {1,2}$ as a consequence of \eqref{eq:logAi}.
Thus we obtain \eqref{eq:potential}.
\end{proof}

\begin{proof}[Proof of Lemma \ref{lem:g}] We use the identity $A_iX_i +B_i\eqdist X_i$ inside the definition of $G_{\bb \theta}$ and compute
	\begin{align*}
		\int _{\R ^2} |G_{\bb \theta}(\bb s)|\ d\bb s
		=&~\int _{\R ^2}e^{\langle \bb \theta , \bb s\rangle }|\P \left ( |A_iX_i+B_i|> e^{s_i}, i=1,2\right ) - \P \left (|A_iX_i|> e^{s_i}, i=1,2\right )| d\bb s\\
		\leq&~  \int _{\R ^2}e^{\langle \bb \theta , \bb s\rangle }|\P \left(  |A_iX_i+B_i|> e^{s_i}, i=1,2\right )
		-\P( \left |A_1X_1+B_1|> e^{s_1}, |A_2X_2|>e^{s_2} \right )| d\bb s \\  
		&~+\int _{\R ^2}e^{\langle\bb \theta , \bb s\rangle }|\P( \left |A_1X_1+B_1|> e^{s_1}, |A_2X_2|>e^{s_2} \right )- \P \left (|A_iX_i|> e^{s_i}, i=1,2\right )| d\bb s \\
		=&~\int _{\R ^2}e^{\langle \bb \theta , \bb s\rangle }
		\P  (|A_1X_1+B_1|> e^{s_1}, 
		|A_2X_2|< e^{s_2}\leq |A_2X_2+B_2|) d\bb s\\
		&~+\int _{\R ^2}e^{\langle \bb \theta , \bb s\rangle }
		\P  (|A_1X_1+B_1|> e^{s_1}, 
		|A_2X_2+B_2|<e^{s_2}\leq |A_2X_2|) d\bb s\\
		&~+ \int _{\R ^2}e^{\langle \bb \theta , \bb s\rangle }
		\P  (|A_1X_1|< e^{s_1}\leq |A_1X_1+B_1|,
		|A_2X_2|> e^{s_2}) d\bb s\\
		&~+ \int _{\R ^2}e^{\langle \bb \theta , \bb s\rangle }
		\P  (|A_1X_1+B_1|< e^{s_1}\leq |A_1X_1|,
		|A_2X_2|> e^{s_2}) d\bb s
	\end{align*}
	Applying Fubini we can further equate this to 
	\begin{align}
		&~	 \E \left [\int _{\R ^2}e^{\langle \bb \theta , \bb s\rangle }
		\Ind {\{ |A_1X_1+B_1|> e^{s_1} \}}\Ind {\{
			\min (|A_2X_2|, |A_2X_2+B_2|)< e^{s_2}\leq \max (|A_2X_2|,|A_2X_2+B_2|) \}} d\bb s\right ] \notag \\
		&~	+ \E \left [\int _{\R ^2}e^{\langle \bb \theta , \bb s\rangle }
		\Ind {\{ \min (|A_1X_1|, |A_1X_1+B_1|)< e^{s_1}\leq \max (|A_1X_1|,|A_1X_1+B_1|) \}}\Ind {\{ |A_2X_2|> e^{s_2}, \}} d\bb s \right ]\notag\\
		=&~ \E \left [\left ( \int _{-\8}^{\log |A_1X_1+B_1|}e^{\theta _1 s_1 }\ ds_1\right ) 
		\left |\int _{\log |A_2X_2|}^{\log |A_2X_2+B_2|}e^{\theta _2 s_2 } \ ds_2\right |\right ]\notag \\
		&~+\E \left [ \left ( \int _{-\8}^{\log |A_2X_2|}e^{\theta _2 s_2 }\ ds_2\right ) 
		\left |\int _{\log |A_1X_1|}^{\log |A_1X_1+B_1|}e^{\theta _1 s_1 } \ ds_1\right |\right ] \notag\\
		\leq&~ (\theta _1\theta _2)^{-1} \E \left [|A_1X_1+B_1|^{\theta _1 } 
		\left | |A_2X_2+B_2|^{\theta _2 }-|A_2X_2|^{\theta _2}\right | \right ] \notag \\
		&~+ (\theta _1\theta _2)^{-1} \E \left [|A_2X_2|^{\theta _2} 
		\left | |A_1X_1+B_1|^{\theta _1 }-|A_1X_1|^{\theta _1}\right |\right ] \label{eq:bound.EG}
	\end{align}
	Depending on whether $\theta_k \le 1$ or $\theta_k>1$, we proceed slightly differently.
	
	\textbf{Case $\theta_1\le 1, \theta_2\le 1$}. Using subadditivy, We estimate the sum of the expectations in \eqref{eq:bound.EG} by
	\begin{align*}
		&~\E [|A_1X_1|^{\theta _1 } 
		| B_2|^{\theta _2 } + |A_2X_2|^{\theta _2}|B_1|^{\theta _1}+|B_1|^{\theta _1}| B_2|^{\theta _2 }]\\
		=&~\E [|A_1^{\theta _1 }| 
		| B_2|^{\theta _2 }]\E [|X_1|^{\theta _1 }] + \E [|A_2^{\theta _2}||B_1|^{\theta _1}]\E [|X_2|^{\theta _2}]+\E [|B_1|^{\theta _1}| B_2|^{\theta _2 }] \\
		\le&~ \Psi(\bb \theta) \mathfrak{M}(\theta_1,0) + \Psi(\bb \theta) \mathfrak{M}(0,\theta_2) + \Psi(\bb \theta)
	\end{align*}
	and the latter is finite by assumption.
	
	\textbf{Case $\theta_1 \wedge \theta_2>1$}.
	If $\theta_k \leq 1$, we use the inequality$|\bb x + \bb y|^{\theta_k} \le 2^{\theta_k-1}(|\bb x|^{\theta_k} + |\bb y|^{\theta_k})$, the inequality $|\bb x|^{\theta_k-1}|\bb y| \le |\bb x|^{\theta_k} + |\bb y|^{\theta_k}$ and the inequality $\abs{| \bb x|^{\theta_k}-|\bb y|^{\theta_k}} \le \max\{ |\bb x|, |\bb y|\}^{\theta_k-1} |\bb x - \bb y|$ to obtain
	\begin{align*}
		| |A_kX_k+B_k|^{\theta _k }-|A_kX_k|^{\theta _k}|&\leq \max \left (|A_kX_k+B_k|^{\theta _k -1},|A_kX_k|^{\theta _k-1}\right )|B_k|\\
		&\leq 
		C (|A_kX_k|^{\theta _k-1}+|B_k|^{\theta _k-1})|B_k| \leq C(|A_kX_k|^{\theta _k-1}|B_k|+|B_k|^{\theta _k}) \\
		&\leq C (|A_k|^{\theta_k} |X_k|^{\theta _k-1} + |B_k|^{\theta_k} |X_k|^{\theta_k-1}+|B_k|^{\theta _k}).
	\end{align*}
With this, we provide below an estimate for the first expectation in \eqref{eq:bound.EG} in the case where both $\theta_1, \theta_2>1$. The remaining expectation as well as the other cases can be treated then in a similar way.
\begin{align*}
	&~ \E \left [|A_1X_1+B_1|^{\theta _1 } 
	\left | |A_2X_2+B_2|^{\theta _2 }-|A_2X_2|^{\theta _2}\right | \right ]  \\
	\leq&~ C \E \big[ (|A_1 X_1|^{\theta_1} + |B_1|^{\theta_1}) (|A_2|^{\theta_2} |X_2|^{\theta _2-1} + |B_2|^{\theta_2} |X_2|^{\theta_2-1}+|B_2|^{\theta _2})\big] \\
	\leq&~ C \Big( \E \big[ |A_1|^{\theta_1} |A_2|^{\theta_2} \big]  \E \big[ |X_1|^{\theta_1} |X_2|^{\theta_2-1} \big] + \E \big[ |A_1|^{\theta_1} |B_2|^{\theta_2} \big] \E \big[ |X_1|^{\theta_1} |X_2|^{\theta_2-1} \big]  \\
	&~ +\E \big[ |A_1|^{\theta_1} |B_2|^{\theta_2} \big] \E \big[ |X_1|^{\theta_1}  \big] + \E \big[ |B_1|^{\theta_1} |A_2|^{\theta_2} \big] \E \big[ |X_2|^{\theta_2-1} \big] + \E \big[ |B_1|^{\theta_1} |B_2|^{\theta_2} \big] \big( \E \big[  |X_2|^{\theta_2-1} \big] + 1 \big) 	\Big) \\
	\leq &~ C \Psi(\bb \theta) \big( \mathfrak{M}(\theta_1,\theta_2-1)  + \mathfrak{M}(\theta_1,0) + \mathfrak{M}(0,\theta_2-1) + 1\big)
\end{align*}
This last bound is finite by Lemma \ref{lem:two} and the assumptions.
\end{proof}

\subsection{The Case of Blocks}
For the general case of blocks, we have the following version of Theorem \ref{thm:moments}. Recall from \eqref{eq:psimoments} the definition of $\psi$, if necessary.

\begin{theorem}\label{thm:moments:blocks} Consider $\bb \xi \in [0,1)^2$ with $\phi(\bb \xi)<1$ and assume $\psi(\bb \xi)<\infty$.
	Then
	\begin{equation*}
		\E \big[\norma{\bb X^{(1)}}^{\xi_1} \norma{\bb X^{(2)}}^{\xi_2} \big] <\8 .
	\end{equation*}
\end{theorem}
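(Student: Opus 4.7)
The plan is to reduce the block-level estimate to Theorem \ref{thm:moments} applied to each pair of coordinates drawn from the two blocks. The reduction starts from the elementary inequality $(\max_j a_j)^\xi = \max_j a_j^\xi \le \sum_j a_j^\xi$, valid for $\xi \ge 0$ and positive $a_j$. Applied to $\norma{\bfX^{(i)}} = \max_{j \in J_i} |X_j|^{\alpha_j}$, it yields
\begin{equation*}
\norma{\bfX^{(1)}}^{\xi_1}\, \norma{\bfX^{(2)}}^{\xi_2} \;\le\; \sum_{j \in J_1}\sum_{k \in J_2} |X_j|^{\alpha_j \xi_1}\, |X_k|^{\alpha_k \xi_2},
\end{equation*}
so it suffices to prove that $\E\big[|X_j|^{\alpha_j \xi_1}\, |X_k|^{\alpha_k \xi_2}\big] < \infty$ for every fixed pair $(j,k) \in J_1 \times J_2$.

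Fix such a pair. Marginalising $\bfX \eqdist \bfA\bfX + \bfB$ to coordinates $j$ and $k$ shows that $(X_j, X_k)$ is a stationary solution of the two-dimensional diagonal SRE driven by $(A_j, A_k, B_j, B_k)$, so Theorem \ref{thm:moments} is applicable with the role of $\bb \alpha$ played by $(\alpha_j, \alpha_k)$, the role of $\bb \xi$ unchanged, and $\bb \theta = (\alpha_j \xi_1, \alpha_k \xi_2)$. The key point is that since $j \in J_1$ and $k \in J_2$, we have $|A_j|^{\alpha_j} = |A_1|^{\alpha_1}$ and $|A_k|^{\alpha_k} = |A_2|^{\alpha_2}$ almost surely by the definition of the equivalence relation, so the two-dimensional moment generating function for the pair coincides with the global $\phi$:
\begin{equation*}
\E\big[|A_j|^{\alpha_j \xi_1}\, |A_k|^{\alpha_k \xi_2}\big] \;=\; \E\big[|A_1|^{\alpha_1 \xi_1}\, |A_2|^{\alpha_2 \xi_2}\big] \;=\; \phi(\bb \xi) \;<\; 1
\end{equation*}
by assumption.

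It remains to verify the mixed-moment hypothesis of Theorem \ref{thm:moments}. Using $|B_j|^{\alpha_j} \le \norma{\bfB^{(1)}}$, $|B_k|^{\alpha_k} \le \norma{\bfB^{(2)}}$, and the identities $|A_j|^{\alpha_j} = |A_1|^{\alpha_1}$, $|A_k|^{\alpha_k} = |A_2|^{\alpha_2}$, each of the four terms
\begin{equation*}
\E\big[|B_j|^{\alpha_j \xi_1}|B_k|^{\alpha_k \xi_2}\big],\quad \E\big[|B_j|^{\alpha_j \xi_1}|A_k|^{\alpha_k \xi_2}\big],\quad \E\big[|A_j|^{\alpha_j \xi_1}|B_k|^{\alpha_k \xi_2}\big],\quad \E\big[|A_j|^{\alpha_j \xi_1}|A_k|^{\alpha_k \xi_2}\big]
\end{equation*}
that constitute the pairwise analogue of \eqref{eq:mixedmoments} is dominated by the corresponding term of $\psi(\bb \xi)$, and is therefore finite. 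The same reasoning applied with $\bb \xi = (1,1)$ shows that the standing assumption $\psi(1,1)<\infty$ (Assumption \eqref{eq:mixedblocks}) delivers the pairwise version of \eqref{assump:mixedmoments}, which is implicitly invoked through Lemma \ref{lem:properties.Psi} inside the induction underlying Theorem \ref{thm:moments}. The structural hypotheses \eqref{assump:alpha}, \eqref{assump:moments}, \eqref{assump:nondeg} transfer automatically to the marginal pair.

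I do not anticipate a serious obstacle: thanks to the block structure, the two-dimensional Laplace transforms for every marginal pair all collapse onto the single function $\phi$, and the norm-like function $\norma{\cdot}$ is dismantled by the crude max-to-sum bound used above (which is the reason we work here with $\norma{\bfX^{(i)}}^{\xi_i}$ rather than trying to sum a triangle-type inequality through the infinite series representation of $\bfX^{(i)}$, where the constant $c_{\bb\alpha}$ would accumulate catastrophically). The argument is essentially clean bookkeeping, reducing the block statement to the two-dimensional Theorem \ref{thm:moments}.
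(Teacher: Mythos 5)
Your proof is correct and follows essentially the same route as the paper: both bound $\norma{\bfX^{(1)}}^{\xi_1}\norma{\bfX^{(2)}}^{\xi_2}$ by the double sum over coordinate pairs from the two blocks, then invoke Theorem \ref{thm:moments} for each pair, using the equivalence-class identities $|A_j|^{\alpha_j}=|A_1|^{\alpha_1}$, $|A_k|^{\alpha_k}=|A_2|^{\alpha_2}$ to identify the pairwise moment generating function with the global $\phi$ and to dominate the pairwise $\Psi$ by $\psi$.
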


\begin{proof}
	Recalling the definition of $\norma{\cdot}$, we have for $I=\{1,3, \dots, d_1+1\}$, $J=\{2,d_1+2, \dots, d_1+d_2\}$
	\begin{align*}
		\norma{\bb X^{(1)}}^{\xi_1} \norma{\bb X^{(2)}}^{\xi_2}&= \big( \sup_{i \in I} |\bb X_i|^{\alpha_i \xi_1} \big) \big( \sup_{j \in J} |\bb X_j|^{\alpha_j \xi_2} \big) \le \big( \sum_{i \in I} |\bb X_i|^{\alpha_i \xi_1} \big) \big( \sum_{j \in J} |\bb X_j|^{\alpha_j \xi_2} \big) \\
		&\le \sum_{i \in I}\sum_{j \in J} |\bb X_i|^{\alpha_i \xi_1} |\bb X_j|^{\alpha_j \xi_2}
	\end{align*}
	Hence it suffices to prove finiteness for pairs of coordinates from different blocks, which follows from Theorem \ref{thm:moments} applied on coordinates $i,j$ with $\theta=(\xi_1 \alpha_i, \xi_2 \alpha_j)$. Note that $\Psi(\bb \theta) \le \psi(\bb \xi)$ and that, by \eqref{eq:equivalence.relation}, $|A_i|^{\theta_1}=(|A_i|^{\alpha_i})^{\xi_1}=|A_1|^{\alpha_1 \xi_1}$ and $|A_j|^{\alpha_j \xi_2}=(|A_j|^{\alpha_j})^{\xi_2}=|A_2|^{\alpha_2 \xi_2}$ for $i \in I$, $j \in J$.
\end{proof}

We also note the following "block" version of Lemma \ref{lem:properties.Psi}.

\begin{lemma}\label{lem:properties.Psi.blocks}
	Assuming \eqref{assump:alpha}, \eqref{assump:moments} and \eqref{eq:mixedblocks}, it holds that $\psi(\bb \xi)<\infty$ for all $\bb \xi \in [0,1]^2$.
\end{lemma}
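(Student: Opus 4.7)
The plan is to bound each of the four summands in $\psi(\bb \xi)$ separately, in each case by comparing to the corresponding summand in $\psi(1,1)$ via an elementary pointwise inequality. The key observation is that for nonnegative random variables $U,V$ and exponents $\xi_1,\xi_2 \in [0,1]$, one has
$$U^{\xi_1}V^{\xi_2} \le (1+U)(1+V) = 1 + U + V + UV,$$
since $t^{\xi} \le 1 + t$ for every $t \ge 0$ and $\xi \in [0,1]$ (split into $t \le 1$ and $t > 1$). Taking expectations gives $\E[U^{\xi_1}V^{\xi_2}] \le 1 + \E[U] + \E[V] + \E[UV]$, and the problem reduces to checking finiteness of the first moments $\E[U], \E[V]$ and of the single mixed moment $\E[UV]$.

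For the first summand of $\psi(\bb \xi)$ I would take $U = |A_1|^{\alpha_1}$ and $V = \norma{\bb B^{(2)}}$. Then $\E[U] = 1$ by \eqref{assump:alpha}; and since $\norma{\bb B^{(2)}} = \max_{j \in J_2}|B_j|^{\alpha_j} \le \sum_{j \in J_2}|B_j|^{\alpha_j}$, assumption \eqref{assump:moments} yields $\E[V] < \infty$. The mixed moment $\E[UV] = \E\bigl[|A_1|^{\alpha_1}\norma{\bb B^{(2)}}\bigr]$ is exactly one of the four nonnegative pieces making up $\psi(1,1)$, which is finite by the hypothesis \eqref{eq:mixedblocks}. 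The same recipe handles the remaining three summands: for the second and third of $\psi(\bb \xi)$ the roles of $|A_\cdot|^{\alpha_\cdot}$ and $\norma{\bb B^{(\cdot)}}$ are permuted, and for the fourth, namely $\phi(\bb \xi)$, one takes $U = |A_1|^{\alpha_1}$, $V = |A_2|^{\alpha_2}$, with $\E[UV] = \phi(1,1) \le \psi(1,1) < \infty$.

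There is no real obstacle here; the argument is a bookkeeping exercise once one notes the pointwise comparison $t^\xi \le 1+t$ and observes that $\psi(1,1)$ simultaneously controls all four mixed second moments that arise. An alternative route would be to mimic the exponential-tilting argument used in the proof of Lemma \ref{lem:properties.Psi}, but the direct estimate above is shorter and avoids introducing auxiliary probability measures.
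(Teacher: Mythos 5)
Your proof is correct, and it takes a genuinely different (and more elementary) route than the paper. The paper's proof notes the finiteness of $\psi$ at the four corners $(0,0),(1,0),(0,1),(1,1)$ of the square and then delegates to the proof of Lemma~\ref{lem:properties.Psi}, which is an exponential-tilting argument: one defines auxiliary probability measures such as $\widetilde{\P}(C)=\E[\mathbf{1}_C|A_2|^{\alpha_2}]/\E[|A_2|^{\alpha_2}]$ and repeatedly exploits the monotonicity of moments of order $\theta\le 1$ under such measures. Your argument uses exactly the same input (the corner values of $\psi$), but replaces the tilting machinery with the single pointwise inequality $t^\xi\le 1+t$ for $t\ge 0$, $\xi\in[0,1]$, which immediately yields $U^{\xi_1}V^{\xi_2}\le 1+U+V+UV$ and hence bounds each summand of $\psi(\bb\xi)$ by the corresponding combination of summands of $\psi$ at the corners. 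This is shorter, avoids introducing auxiliary measures, and gives the bound on all of $[0,1]^2$ in one step; the small price is that it is specific to exponents in $[0,1]$, whereas the tilting argument in Lemma~\ref{lem:properties.Psi} is phrased so as to handle the full range $\bb 0\le\bb\theta\le\bb\alpha$ directly. Your verification that $\E[\norma{\bb B^{(j)}}]<\infty$ via $\norma{\bb B^{(j)}}\le\sum_{i\in J_j}|B_i|^{\alpha_i}$ and \eqref{assump:moments} is also correct and needed, since $\psi(1,0)$ and $\psi(0,1)$ are not explicitly postulated finite in \eqref{eq:mixedblocks}.
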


\begin{proof}
	Assumption \eqref{eq:mixedblocks} implies that $\psi(1,1)<\infty$, while \eqref{assump:alpha} and \eqref{assump:moments} imply that $\psi(0,1)<\infty$ and $\psi(1,0)<\infty$. Obviously, $\psi(0,0)<\infty$ as well. Given this data, we may now proceed as in the proof of Lemma \ref{lem:properties.Psi} to show that $\psi(\bb \xi)<\infty$ for all $\bb \xi \in [0,1]^2$.
\end{proof}

\section{Two-Dimensional Implicit Renewal Theory}\label{sect:implicit.renewal}
In this section we will provide the proof of Theorem \ref{thm:main:blocks}, except for showing that $\Lambda_2$ is nonzero which will be done in Section \ref{sect:positivity}.
Our main tool is a renewal theorem on $\R^2 \times K$, a proof of which (extending the renewal theorem on $\R^2$) is given in the final Section \ref{sect:2dRenewal.with.K}. Recall that the value of $\bb \xi^*$ has been fixed in Theorem \ref{thm:main:blocks}.

The proof of Theorem \ref{thm:main:blocks} is based on implicit renewal theory (see \cite{Goldie1991}), namely the fact (proved in Lemma \ref{lem:implicit.renewal} below) that for any $k \in K$ and $f \in \bb H^{\epsilon}(\R^d \setminus [\mathsf{blocks}])$ (the value of $\epsilon$ is fixed in Proposition \ref{prop:gdri}),
$$ e^{\skalar{\bb \xi^*, \bb s}} \E \big[ f( e^{-\bb s/\bb  \alpha} k\bb X) \big]= g_{\bb \xi^*} * \wt{\mathbb{U}}_{\bb \xi^*}(\bb s,k) ,$$
where $\wt{\mathbb{U}}_{\bb \xi^*}$ is the renewal measure with respect to  $\wt{\mu}_{\bb \xi^*}=e^{\skalar{\bb \xi^*, \bb s}}\wt \mu $, (recall \eqref{eq:decomp}), and 
\begin{equation*}
	 g_{\bb \xi^* }(\mathbf{s},k ):=e^{\langle \bb \xi^* ,\mathbf{s}\rangle }(\E [f(e^{-\mathbf{s} / \bb \a}k(\bfA\bfX+\bfB))]-\E [f(e^{-\mathbf{s} / \bb \a} k\bfA \bfX)] ).
\end{equation*}
In the case of blocks, this has to be understood as
\begin{align*}
	g_{\bb \xi^* }(\bb s,k )= &e^{\langle \bb \xi^* ,\mathbf{s}\rangle }\bigg(\E \Big[f\Big(e^{-s_1 / \bb \a^{(1)}}k^{(1)}(\bfA^{(1)}\bfX^{(1)}+\bfB^{(1)}),\, e^{-s_2 / \bb \a^{(2)}}k^{(2)}(\bfA^{(2)}\bfX^{(2)}+\bfB^{(2)})\Big) \Big]  \\
		&~~~-\E \Big[f\Big(e^{-s_1 / \bb \a^{(1)}}k^{(1)}(\bfA^{(1)}\bfX^{(1)}),\, e^{-s_2 / \bb \a^{(2)}} k^{(2)}(\bfA^{(2)}\bfX^{(2)})\Big) \Big] \bigg).
\end{align*}
Before giving the proof of Theorem \ref{thm:main:blocks}, we need some preparation, which we record in the following proposition.
Given a function $f$ on $\R^{d_1+d_2}$, we denote
\begin{equation}
		f_{M,1}(\bb x)=f(M^{-1\slash \bb \alpha^{(1)} } \bb x^{(1)}, \bb x^{(2)}) \text{ or } 
f_{1,M}(\bb x)=f(\bb x^{(1)},M^{-1\slash \bb \alpha^{(2)} } \bb x^{(2)}) \label{eq:fM}
	\end{equation} and use notations $g_{\xi^*,M,1}$ and $g_{\xi^*,1,M}$, respectively. Finally, recall
	\begin{equation*}
		\psi (\bb \xi)=\E\big[ |A_1|^{\xi_1\alpha _1} \norma{\bfB ^{(2)}}^{\xi_2}\big] +\E \big[ \norma{\bfB ^{(1)}}^{\xi_1}\ |A_2|^{\xi_2\alpha _2}\big] + \E \big[ \norma{\bfB ^{(1)}}^{\xi_1}\norma{\bfB ^{(2)}}^{\xi_2}\big]
		+ \phi(\bb \xi)
	\end{equation*}

\begin{proposition}\label{prop:gdri}
	Suppose that there is $\bb \xi^* \in (0,1)^2$ with $\phi (\bb \xi ^*)= 1$.  Assume further that there is $\d' >0$ such that $ \psi (\bb \xi) <\8$ 
	for every  $\bb \xi \in \{\bb \xi^* \pm \delta' \bb e_1, \bb \xi^* \pm \delta' \bb e_2\} $. Then for any $0<\epsilon< \min\{\xi_1^*, \xi_2^*\}$, the following holds:
	\begin{enumerate}
		\item For every $f \in \bb H^{\epsilon}(\R^d \setminus [\mathsf{blocks}])$,
		\begin{equation}\label{eq:dri}
			\| g_{\bb \xi ^*}\| _{\dri}= \sum _{k\in K}\sum _{\bfm}\sup _{\mathbf{s}\in F_{\bfm} }(1+\| \mathbf{s}\| )|g _{\bb \xi^*}(\mathbf{s},k)|<\8. 
			\end{equation}
		\item There is $C>0$ such that for every $f \in \bb H^{\epsilon}(\R^d \setminus [\mathsf{blocks}])$
		\begin{equation}\label{eq:boundLimitDRI}
			\limsup_{t \to \infty} (\log t)^{1/2} t^{\xi_1^* + \xi_2^*} \E \Big[ f\big( t^{-1/\bb \alpha} \bb X\big)\Big] ~ \le ~ C \| g_{\bb \xi ^*}\| _{\dri}
		\end{equation} 
		and 
		there is $c=c(f)>0$ such that for every $M>1$,
		\begin{align}
			\limsup_{t \to \infty} (\log t)^{1/2} t^{\xi_1^* + \xi_2^*} \E \Big[ f_{M,1}\big( t^{-1/\bb \alpha} \bb X\big)\Big] ~ \le ~ c M^{-\xi_1^*}, \notag \\
			\limsup_{t \to \infty} (\log t)^{1/2} t^{\xi_1^* + \xi_2^*} \E \Big[ f_{1,M}\big( t^{-1/\bb \alpha} \bb X\big)\Big] ~ \le ~ c M^{-\xi_2^*} \label{eq:boundLimitDRIwithM}
		\end{align}
	\end{enumerate}
\end{proposition}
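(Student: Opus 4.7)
The proof centers on part (1)---establishing direct Riemann integrability of $g_{\bb\xi^*}$---from which part (2) follows via the implicit renewal identity (to be established below) and the two-dimensional renewal theorem with group $K$ from Section~\ref{sect:2dRenewal.with.K}.

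For part (1), the plan is to derive a pointwise upper bound on $|g_{\bb\xi^*}(\bb s, k)|$ that decays exponentially in every direction of $\bb s$. Two ingredients enter: the $\epsilon$-$\bb\alpha$-H\"older property of $f$ and the restriction $\supp f \subset \{\norma{\bb x^{(i)}} > \eta_i, \; i=1,2\}$. Combining the H\"older estimate with the $\bb\alpha$-homogeneity of $\norma{\cdot}$ yields
\begin{equation*}
|f(e^{-\bb s/\bb\alpha} k (\bb A\bb X + \bb B)) - f(e^{-\bb s/\bb\alpha} k \bb A \bb X)| \le C_f\bigl(e^{-\epsilon s_1}\norma{\bb B^{(1)}}^\epsilon + e^{-\epsilon s_2}\norma{\bb B^{(2)}}^\epsilon\bigr),
\end{equation*}
while the support assumption, via subadditivity of $\norma{\cdot}$, forces the integrand to vanish outside the event
\begin{equation*}
E \subset \bigcap_{i=1,2}\Bigl(\bigl\{\norma{\bb A^{(i)}\bb X^{(i)}} > \eta_i' e^{s_i}\bigr\} \cup \bigl\{\norma{\bb B^{(i)}} > \eta_i' e^{s_i}\bigr\}\Bigr)
\end{equation*}
for suitable $\eta_i'$. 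Expanding $\mathbf{1}_E$ into four products of indicators and applying Markov's inequality to each with exponents $\bb\theta_j = (\theta_{j,1}, \theta_{j,2})$ produces a bound of the form $\sum_j C_j\, e^{\langle \bb\xi^* - \bb\theta_j - \epsilon \bb e_{i_j}, \bb s\rangle}$ multiplied by finite mixed moments of $(\bb A, \bb B, \bb X)$.

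The selection of the $\bb\theta_j$ is the delicate point and is what I expect to be the main obstacle. Since $\nabla \phi(\bb \xi^*)$ is parallel to $(1,1)$, any perturbation of $\bb\xi^*$ with positive $(1,1)$-component pushes $\phi$ above $1$ and renders the required mixed $\bb X$-moment infinite. The remedy is to exploit the H\"older slack $\epsilon > 0$ and pick $\bb\theta_j$ inside the diamond $\{\bb\xi^* + \bb v : \|\bb v\|_1 \le \delta'\}$, on which $\psi$ is finite by convexity and the hypothesis $\psi(\bb\xi^* \pm \delta' \bb e_i) < \infty$, with the additional properties that $\phi(\bb\theta_j) < 1$ (so that Theorem~\ref{thm:moments:blocks} supplies $\E[\norma{\bb X^{(1)}}^{\theta_{j,1}}\norma{\bb X^{(2)}}^{\theta_{j,2}}] < \infty$) and that $\bb\xi^* - \bb\theta_j - \epsilon \bb e_{i_j}$ is coordinatewise strictly negative. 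A further application of H\"older's inequality reduces the mixed expectation $\E[\norma{\bb B^{(i_j)}}^\epsilon |A_1|^{\alpha_1 \theta_{j,1}} |A_2|^{\alpha_2 \theta_{j,2}}]$ to quantities controlled by $\psi$ at nearby points. Summing the resulting pointwise bound over the cells $F_{\bb m}$ with weight $(1 + \|\bb s\|)$ gives $\|g_{\bb\xi^*}\|_{\dri} < \infty$.

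For part (2), the implicit renewal identity yields $e^{\langle \bb\xi^*, \bb s\rangle} \E[f(e^{-\bb s/\bb\alpha} k \bb X)] = g_{\bb\xi^*} * \wt{\mathds{U}}_{\bb\xi^*}(\bb s, k)$; specializing $\bb s = (\log t)(1,1)$ and applying the renewal theorem from Section~\ref{sect:2dRenewal.with.K} produces a finite $(\log t)^{1/2}$-rescaled limit bounded by $C \|g_{\bb\xi^*}\|_{\dri}$, giving \eqref{eq:boundLimitDRI}. For \eqref{eq:boundLimitDRIwithM}, the substitution $\bb s \mapsto \bb s - (\log M, 0)$ inside the definition of $g_{\bb\xi^*, f_{M,1}}$ shows
\begin{equation*}
g_{\bb\xi^*, f_{M,1}}(\bb s, k) = M^{-\xi_1^*}\, g_{\bb\xi^*, f}(\bb s + (\log M, 0), k),
\end{equation*}
and translation invariance of the Lebesgue integral of the right-hand side produces the stated $M^{-\xi_1^*}$ scaling. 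The $f_{1,M}$ case is symmetric.
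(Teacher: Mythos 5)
Your strategy for part~(1) runs in the right spirit (H\"older slack, support constraint, exponents near $\bb\xi^*$ chosen so that $\phi<1$), but the Markov-inequality execution does not close as sketched. Replacing each indicator $\Ind{\{\cdots > \eta_i' e^{s_i}\}}$ by $e^{-\theta_{j,i}s_i}\,(\cdots)^{\theta_{j,i}}$ produces a pointwise bound $\sim e^{\langle \bb\xi^*-\bb\theta_j-\epsilon\bb e_{i_j},\,\bb s\rangle}$, and with the exponent vector coordinatewise strictly negative this decays as $\bb s\to+\infty$ but blows up as $\bb s\to-\infty$; more seriously it also fails to decay in mixed directions such as $\bb s=t(-1,1)$, so $\sum_{\bfm}\sup_{F_{\bfm}}(1+\|\bb s\|)$ applied to this bound diverges. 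The paper sidesteps this by keeping the indicators and summing the geometric series only over the random lattice $\{\bfm\le\bar{\bfm}\}$: the random cutoffs $\bar m_j$ kill the positive-$s$ directions, while the factor $e^{\langle\bb\xi,\bfm\rangle}$ with $\bb\xi\in(0,1)^2$ handles the negative ones. Some such case split by octant is needed to repair your version. Note also the paper's two-step structure: the unweighted cell-sums are shown finite for the four perturbations $\bb\xi\in\{\bb\xi^*\pm\delta\bb e_i\}$ (Lemma~\ref{lem:dri}), and then the weight is absorbed via $(1+\|\bb s\|)\le\delta^{-1}\big(e^{\delta|s_1|}+e^{\delta|s_2|}\big)$.

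For part~(2), your translation identity $g_{\bb\xi^*,f_{M,1}}(\bb s,k)=M^{-\xi_1^*}g_{\bb\xi^*,f}(\bb s+(\log M,0),k)$ is correct and is a genuinely different, cleaner route than the paper's (which reruns the cell estimate with the shifted cutoff $\bar m_1(M)\le-\log M+1+\bar m_1$). But the concluding appeal to ``translation invariance of the Lebesgue integral'' is a gap: $\|\cdot\|_{\dri}$ carries the weight $(1+\|\bb s\|)$, which is not translation invariant, so a shift by $(\log M,0)$ costs an extra factor of order $1+\log M$ and you get $M^{-\xi_1^*}(1+\log M)$, short of the stated $cM^{-\xi_1^*}$. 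To recover the clean rate by your route, do not estimate $\|g_{f_{M,1}}\|_{\dri}$ at all; instead substitute the identity directly into the renewal representation, decompose $(\log t)(1,1)+(\log M,0)$ into a $(1,1)$-component of length $\log t+\tfrac12\log M$ plus an orthogonal shift $\tfrac12(\log M)(1,-1)$, and invoke the uniform off-diagonal Carlsson bound (Theorem~\ref{thm:Carlsson}, as used in Lemma~\ref{lem:dRi.proof.of.Renewal.Theorem}).
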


Note that by Lemma \ref{lem:properties.Psi.blocks}, $\Psi(\bb \xi)<\infty$ for all $\bb \xi \in [0,1]^2$.
Since $\bb \xi^* \in (0,1)^2$, we can always find $\delta'$ as required in Proposition \ref{prop:gdri}.
The proof of the Proposition will be based on several Lemmata.

 \begin{lemma}\label{lem:dri}
Under the assumptions of Proposition \ref{prop:gdri}, choose $\d < \min \{\epsilon , \xi ^*_1-\epsilon , \xi ^*_2-\epsilon, \delta' \}.$
Then for every $f \in \bb H^{\epsilon}(\R^d \setminus [\mathsf{blocks}])$ and every $\bb \xi \in \{\bb \xi^* \pm \delta \bb e_1, \bb \xi^* \pm \delta \bb e_2\}$, it holds that
	\begin{equation}\label{eq:driLem}
		\sum_{k \in K} \sum _{\bfm}\sup _{\mathbf{s}\in F_{\bfm} }|g _{\bb \xi}(\mathbf{s},k)|<\8 
	\end{equation} 
		 Moreover, for every $f \in \bb H^{\epsilon}(\R^d \setminus [\mathsf{blocks}])$	there is $C=C(f)$ such that for every $\bb \xi \in \{\bb \xi^* \pm \delta \bb e_1, \bb \xi^* \pm \delta \bb e_2\}$ 	
		\begin{equation}\label{eq:dRiwithM}
			\sum_{k \in K} \sum _{\bfm}\sup _{\mathbf{s}\in F_{\bfm} }|g _{\bb \xi,M,1}(\mathbf{s},k)|<C
			M^{-\xi _1 }, \quad  \sum_{k \in K} \sum _{\bfm}\sup _{\mathbf{s}\in F_{\bfm} }|g _{\bb \xi,1,M}(\mathbf{s},k)|< CM^{-\xi _2}.
	\end{equation}
\end{lemma}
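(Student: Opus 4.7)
My plan is as follows. Fix $k \in K$ and $\bb \xi \in \{\bb \xi^* \pm \delta \bb e_1, \bb \xi^* \pm \delta \bb e_2\}$, and set $\bb y := e^{-\bb s/\bb \alpha} k \bfA \bfX$ and $\bb z := e^{-\bb s/\bb \alpha} k \bfB$. Since $f$ is $\epsilon$-$\bb \alpha$-H\"older and vanishes off $\{\bb x : \norma{\bb x^{(i)}} > \eta_i,\ i = 1, 2\}$, and $\norma{\bb z} = \max_i e^{-s_i}\norma{\bfB^{(i)}}$, I would first establish the pointwise estimate
\begin{equation*}
|f(\bb y + \bb z) - f(\bb y)| \le C_f\bigl(e^{-\epsilon s_1}\norma{\bfB^{(1)}}^\epsilon + e^{-\epsilon s_2}\norma{\bfB^{(2)}}^\epsilon\bigr)\Ind{E(\bb s)},
\end{equation*}
where, by \eqref{eq:norm:subadditiv} together with the block identity $\norma{\bfA^{(i)}\bfX^{(i)}} = |A_i|^{\alpha_i}\norma{\bfX^{(i)}}$, the event $E(\bb s)$ is contained in $\{Y_i > e^{s_i},\ i = 1, 2\}$ with $Y_i := c_{\bb \alpha}(|A_i|^{\alpha_i}\norma{\bfX^{(i)}} + \norma{\bfB^{(i)}})/\eta_i$.

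Both $\norma{\bb z}^\epsilon$ and $\Ind{E(\bb s)}$ are nonincreasing in each $s_i$, while $e^{\langle\bb \xi,\bb s\rangle}$ is nondecreasing, so the supremum of $|g_{\bb \xi}(\cdot, k)|$ over $F_{\bb m}$ is dominated by $e^{\xi_1 + \xi_2}$ times the value at $\bb s = \bb m$. Summing over $\bb m \in \Z^2$ via Fubini and performing the geometric series (convergent because $\xi_i - \epsilon \ge \xi_i^* - \delta - \epsilon > 0$ by the choice of $\delta$) I obtain
\begin{equation*}
\sum_{k\in K}\sum_{\bb m}\sup_{\bb s\in F_{\bb m}}|g_{\bb \xi}(\bb s, k)| \le C\Bigl(\E\bigl[\norma{\bfB^{(1)}}^\epsilon Y_1^{\xi_1 - \epsilon}Y_2^{\xi_2}\bigr] + \E\bigl[\norma{\bfB^{(2)}}^\epsilon Y_1^{\xi_1}Y_2^{\xi_2 - \epsilon}\bigr]\Bigr).
\end{equation*}

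I would then expand each $Y_i^{\gamma_i}$ via $(a+b)^\gamma \le C(a^\gamma + b^\gamma)$ so that every summand contains at most one of $|A_i|^{\alpha_i}\norma{\bfX^{(i)}}$ or $\norma{\bfB^{(i)}}$ (mirroring the splitting in Lemma \ref{lem:g}), apply independence of $\bfX$ from $(\bfA, \bfB)$, and eliminate any residual within-block product $\norma{\bfB^{(i)}}^\epsilon|A_i|^{\alpha_i(\xi_i - \epsilon)}$ by the weighted AM--GM inequality
\begin{equation*}
\norma{\bfB^{(i)}}^\epsilon|A_i|^{\alpha_i(\xi_i - \epsilon)} \le \tfrac{\epsilon}{\xi_i}\norma{\bfB^{(i)}}^{\xi_i} + \tfrac{\xi_i - \epsilon}{\xi_i}|A_i|^{\alpha_i\xi_i}.
\end{equation*}
This reduces the $(\bfA, \bfB)$-moments to evaluations of $\psi$ and $\phi$ at points in $[0,1]^2$, all finite by Lemma \ref{lem:properties.Psi.blocks} and $\phi(1,1) < \infty$, and reduces the $\bfX$-moments to expressions $\E[\norma{\bfX^{(1)}}^a\norma{\bfX^{(2)}}^b]$ with $(a, b)$ equal to $(\xi_1 - \epsilon, \xi_2)$, $(\xi_1, \xi_2 - \epsilon)$, or with one coordinate vanishing. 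By Theorem \ref{thm:moments:blocks} these are finite provided $\phi(a, b) < 1$; this is the crucial geometric step and the main obstacle. Using $\nabla \phi(\bb \xi^*) \propto (1, 1)$ (so that $D$ has tangent slope $-1$ at $\bb \xi^*$), the convexity of $\Delta$, and the strict inequality $a + b < \xi_1^* + \xi_2^* = m$ (which follows from $\delta < \epsilon$), local analysis of $D$ near $\bb \xi^*$ via the implicit function theorem places $(a, b)$ strictly inside $\Delta$, yielding $\phi(a, b) < 1$.

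Finally, for the bound \eqref{eq:dRiwithM}, I repeat the argument with $f_{M, 1}$. Its support is contained in $\{\norma{\bb x^{(1)}} > M\eta_1,\ \norma{\bb x^{(2)}} > \eta_2\}$, so $Y_1$ is effectively replaced by $Y_1/M$ in the indicator, and from
\begin{equation*}
\norma{\bigl(M^{-1/\bb\alpha^{(1)}}(\bb x^{(1)} - \bb y^{(1)}),\,\bb x^{(2)} - \bb y^{(2)}\bigr)} = \max\bigl(M^{-1}\norma{\bb x^{(1)} - \bb y^{(1)}},\,\norma{\bb x^{(2)} - \bb y^{(2)}}\bigr)
\end{equation*}
the H\"older bound on block-$1$ increments gains an additional factor $M^{-\epsilon}$. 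Consequently the first H\"older piece scales as $M^{-\epsilon}\cdot M^{-(\xi_1 - \epsilon)} = M^{-\xi_1}$ and the second as $M^{-\xi_1}$ via the $(Y_1/M)^{\xi_1}$ coming from the indicator, giving the claimed $C(f)M^{-\xi_1}$; the $f_{1, M}$ case is symmetric.
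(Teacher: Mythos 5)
Your proposal is correct and follows essentially the same route as the paper: the H\"older and support bounds yield a pointwise estimate, summation over $F_{\bb m}$ collapses to geometric series, the products are expanded into $\psi$/$\phi$- and $\bfX$-moments, and the finiteness of $\E\big[\norma{\bfX^{(1)}}^{\xi_1-\epsilon}\norma{\bfX^{(2)}}^{\xi_2}\big]$ hinges on the tangency $\nabla\phi(\bb\xi^*)\parallel(1,1)$ forcing $\phi(\xi_1-\epsilon,\xi_2)<1$. The weighted AM--GM you use to split mixed $(\bfA,\bfB)$-moments, and the monotonicity observation in place of the paper's explicit $e^{-\epsilon m_i}$ bookkeeping, are cosmetic variants of the same steps.
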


\begin{proof}
	We start by making use of the assumption that $f \in \bb H^{\epsilon}$. Namely, there are $\eta_1, \eta_2$ such that
	\begin{align*}
		f(e^{-\mathbf{s} / \bb \a}k(\bfA \bfX+\bfB))\neq 0 \qquad &\Rightarrow \qquad \norma{e^{-s_j / \bb \a^{(j)}}(\bfA\bfX+\bfB)^{(j)}}>\eta _j \text{ for both } j \in \{1,2\}; \\
		 f(e^{-\mathbf{s} / \bb \a}k\bfA \bfX)\neq 0 \qquad &\Rightarrow \qquad \norma{e^{-s_j / \bb \a^{(j)}}(\bfA\bfX)^{(j)}}>\eta _j \text{ for both } j \in \{1,2\}.
	\end{align*}
	Here and below we use that $\norma{k\bb x} = \norma{\bb x}$ for all $k \in K$.
	Using that $\norma{\bb x + \bb y} \le c_\alpha (\norma{\bb x} + \norma{\bb y}),$
	we obtain
	\begin{equation*}
		\Big( f(e^{-\mathbf{s} / \bb \a}(\bfA \bfX+\bfB)) - f(e^{-\mathbf{s} / \bb \a}\bfA \bfX) \Big) \neq 0 \ \Rightarrow \ \norma{(\bfA\bfX)^{(j)}}+\norma{\bfB^{(j)}}>\eta _jc^{-1}_{\a}e^{s_j} \text{ for both } j \in \{1,2\}.
	\end{equation*}
%
Finally, using the $\epsilon$-$\bb \alpha$-Hölder continuity,
\begin{align*}
	&~ \abs{ f(e^{-\mathbf{s} / \bb \a}k(\bfA \bfX+\bfB)) - f(e^{-\mathbf{s} / \bb \a}k\bfA \bfX)} \le C_f \norma{e^{-\bb s / \bb \alpha} k\bb B}^\epsilon \\
	\le &~C_f c_\alpha^\epsilon \big( \norma{e^{-s_1/\bb \alpha^{(1)}} \bb B^{(1)}}^\epsilon + \norma{e^{-s_2/\bb \alpha^{(2)}} \bb B^{(2)}}^\epsilon \big)
\end{align*}
	Therefore, recalling the homogenity property \eqref{eq:homogenity},
	\begin{equation*}
		\abs{g _{\bb \xi}(\mathbf{s},k)}
		\leq Ce^{\langle \bb \xi ,\mathbf{s}\rangle }\E \left [\left (e^{-s_1\epsilon}\norma{\bfB^{(1)}}^{\epsilon}+e^{-s_2\epsilon}\norma{\bfB^{(2)}}^{\epsilon} \right )\Pi _{j=1}^2\Ind {\{\norma{(\bfA\bfX)^{(j)}}+\norma{\bfB^{(j)}}>\eta _jc^{-1}_{\a}e^{s_j} \}}\right ] .
	\end{equation*}
	Next, we  consider any fixed $k \in K$ and take the sum over all $F_{\mathbf m}$. Using the Fubini theorem for nonnegative functions, we may take the summation below inside  the expectation. The indicators will then restrict the domain of summation, as follows.
	\begin{align}
		&~\sum _{\bfm\in \Z ^2}\sup _{\mathbf{s}\in F_{\bfm}}|g _{\bb \xi}(\bb s,k)| \notag \\
		\leq&~ C\sum _{\bfm\in \Z ^2} e^{\langle \bb \xi ,\bfm \rangle }\E \left [e^{-m_1\epsilon} \norma{\bfB^{(1)}}^{\epsilon}\Pi _{j=1}^2\Ind {\{\norma{(\bfA\bfX)^{(j)}}+\norma{\bfB^{(j)}}>\eta _jc^{-1}_{\a}e^{s_j} \}}\right ] \notag \\
		&~+C\sum _{\bfm\in \Z ^2} e^{\langle \bb \xi ,\bfm \rangle }\E \left [e^{-m_2\epsilon}\norma{\bfB^{(2)}}^{\epsilon}\Pi _{j=1}^2\Ind {\{\norma{(\bfA\bfX)^{(j)}}+\norma{\bfB^{(j)}}>\eta _jc^{-1}_{\a}e^{s_j} \}}\right ] \notag \\
		\leq&~ C\E \Big[ \norma{\bfB^{(1)}}^{\epsilon} \sum _{\bf m\leq \bar{\bf m}}e^{(\xi _1-\epsilon)m_1}	e^{\xi _2m_2} \Big] +
		C\E \Big[ \norma{\bfB^{(2)}}^{\epsilon} \sum _{\bf m\leq \bar{\bf m}} e^{\xi _1m_1}	e^{(\xi _2-\epsilon)m_2} \Big], \label{eq:dRiEstimate1}
	\end{align}
	where the vector $\bar{\bf m}$ is given by 
	\begin{equation}\label{eq:m}
		\bar m_j=\lfloor \log |(\bfA\bfX)^{(j)}|_{\a}+|\bfB^{(j)}|_{\a}-\log \eta _j +\log c_{\a}\rfloor\  \mbox{for}\ j=1,2.
	\end{equation}
	Considering the first sum in \eqref{eq:dRiEstimate1}, we reverse signs to obtain geometric series and subsequently plug in the definition of $\bar{\bf m}$, as follows.
	\begin{align}
		&\sum _{-m_1 \ge (-\bar{m}_1)}  e^{-(\xi _1-\epsilon)(-m_1)}\, \sum _{-m_2 \ge (-\bar{m}_2)}e^{-\xi _2(-m_2)}~\leq~ e^{\bar m_1(\xi _1-\epsilon)}\left (1-e^{-\xi _1+\epsilon  }\right )^{-1}e^{\bar m_2\xi _2}\left (1-e^{-\xi _2 }\right )^{-1} \notag \\
		\leq~ & C(\bb \xi, \epsilon )\, \eta _1^{-\xi _1+\epsilon}\eta _2^{-\xi _2}\big(\norma{(\bfA\bfX)^{(1)}}+\norma{\bfB^{(1)}} \big) ^{\xi _1-\epsilon  }
		\big(\norma{(\bfA\bfX)^{(2)}}+\norma{\bfB^{(2)}}\big) ^{\xi _2}\label{eq:eta}
	\end{align}
	The second sum in \eqref{eq:dRiEstimate1} is estimated in the same way. Hence, there is $C=C(\bb \xi, \epsilon)$ such that for any $k \in K$,
	\begin{align}
		& \sum _{\bfm\in \Z ^2}\sup _{\mathbf{s}\in F_{\bfm}}|g _{\bb \xi }(\mathbf{s},k)| \notag \\
		\leq~& C \eta _1^{-\xi _1+\epsilon}\eta _2^{-\xi _2}
		\E \Big[ \norma{\bfB^{(1)}}^{\epsilon} \big(\norma{(\bfA\bfX)^{(1)}}+\norma{\bfB^{(1)}}\big) ^{\xi _1-\epsilon  } \big(\norma{(\bfA\bfX)^{(2)}}+\norma{\bfB^{(2)}} \big) ^{\xi _2} \Big] \notag \\
		&+C \eta _1^{-\xi _1}\eta _2^{-\xi _2+\epsilon }\E \Big[ \norma{\bfB^{(2)}}^{\epsilon} \big( \norma{(\bfA\bfX)^{(1)}}+\norma{\bfB^{(1)}} \big) ^{\xi _1  } \big(\norma{(\bfA\bfX)^{(2)}}+\norma{\bfB^{(2)}}  \big) ^{\xi _2-\epsilon } \Big] ~=:~D \label{eq:estimatedRi2}
	\end{align}
	At this point, it is useful to collect some information regarding \eqref{eq:dRiwithM}. Following the same steps as above, we obtain for $f_{M,1}(\bb x)=f(M^{-1\slash \bb \alpha } \bb x^{(1)}, \bb x^{(2)})$ the following upper bound in \eqref{eq:dRiEstimate1}: 
	\begin{equation*}
	\E \left [M^{-\epsilon} \norma{\bfB^{(1)}}^{\epsilon} \sum _{ \bf m\leq \bar{ \bf m}(M)}e^{(\xi _1-\epsilon)m_1}	e^{\xi _2m_2} \right ]+
	\E \left [ \norma{\bfB^{(2)}}^{\epsilon} \sum _{\bf m \leq \bar{ \bf m} (M)}e^{\xi _1m_1}	e^{(\xi _2-\epsilon)m_2} \right ],
	\end{equation*}
	where $\bar m_1(M)\leq -\log M+1+\bar m_1$ and $\bar m_2(M)=\bar m_2$. It then follows along the same lines as before, that for any $k \in K$,
\begin{equation}\label{eq:estimateMxi1}
\sum _{\bfm\in \Z ^2}\sup _{\mathbf{s}\in F_{\bfm}}|g _{\bb \xi , M,1 }(\mathbf{s},k)|\leq DM^{-\xi _1},
\end{equation}	
where $D$ is defined in \eqref{eq:estimatedRi2}. In the same way, we obtain for $f_{1,M}$ the estimate
\begin{equation}\label{eq:estimateMxi2}
		\sum _{\bfm\in \Z ^2}\sup _{\mathbf{s}\in F_{\bfm}}|g _{\bb \xi , 1,M }(\mathbf{s},k)|\leq DM^{-\xi _2}.
\end{equation}	 
	 
\medskip

It remains to prove that $D$ is indeed finite, then both \eqref{eq:driLem} and \eqref{eq:dRiwithM} follow. Recalling  that $\norma{\bfA^{(j)}\bfX^{(j)}}=|A_j|^{\alpha_j}\norma{\bfX^{(j)}}$, we need to show that the expectations below are finite. 
	\begin{align*}
		&\E \Big[ \norma{\bfB^{(1)}}^{\epsilon} \big(|A_1|^{\alpha_1} \norma{\bfX ^{(1)}}+\norma{\bfB^{(1)}}\big) ^{\xi _1-\epsilon  } \big(|A_2|^{\alpha_2} \norma{\bfX^{(2)}} +\norma{\bfB^{(2)}} \big) ^{\xi _2} \Big]\\
		+&\E \Big[ \norma{\bfB^{(2)}}^{\epsilon} \big(|A_1|^{\alpha_1}\norma{\bfX ^{(1)}}+\norma{\bfB^{(1)}} \big) ^{\xi _1  } \big(|A_2|^{\alpha_2} \norma{\bfX^{(2)}} +\norma{\bfB^{(2)}} \big) ^{\xi _2-\epsilon }  \Big] =: E_1+E_2
	\end{align*}
 Expanding the products, using that $\bb X$ is independent of $(\bb A, \bb B)$ and the inequality
 	$$ x^{\epsilon} y^{s-\epsilon} \leq x^s+y^s $$
 valid for all $x,y>0$ and $0< \epsilon<s$, we obtain that $E_1$ is bounded by
	\begin{align*}
		& \E \big[\norma{\bfB^{(1)}}^{\epsilon}|A_1|^{\alpha_1(\xi _1-\epsilon )} |A_2|^{\alpha_2\xi _2} \big]  \E \big[ \norma{\bfX ^{(1)}}^{\xi _1-\epsilon} \norma{\bfX ^{(2)}}^{\xi _2}  \big] \\
		&+\E \big[ \norma{\bfB^{(1)}}^{\epsilon} |A_1|^{\alpha_1(\xi _1-\epsilon )} \norma{\bfB^{(2)}}^{\xi _2} \big] \E \big[ \norma{\bfX ^{(1)}}^{\xi _1-\epsilon} \big]\\
		&+\E \big[\norma{\bfB^{(1)}|}^{\xi _1} |A_2|^{\alpha_2\xi _2} \big] \E \big[ \norma{\bfX ^{(2)}}^{\xi _2} \big]
		+\E \big[ \norma{\bfB^{(1)}}^{\xi _1} \norma{\bfB^{(2)}}^{\xi _2} \big] \\
		& \leq \Big( \E \big[\norma{\bfB^{(1)}}^{\xi_1} |A_2|^{\alpha_2\xi _2} \big] + \E \big[|A_1|^{\alpha_1 \xi _1} |A_2|^{\alpha_2\xi _2} \big] \Big) M(\xi_1-\epsilon, \xi_2) \\
		&+ \Big(\E \big[ \norma{\bfB^{(1)}}^{\xi_1}  \norma{\bfB^{(2)}}^{\xi _2} \big] + \E \big[  |A_1|^{\alpha_1\xi _1} \norma{\bfB^{(2)}}^{\xi _2} \big] \Big) M(\xi_1 - \epsilon,0) \\
			&+\E \big[\norma{\bfB^{(1)}|}^{\xi _1} |A_2|^{\alpha_2\xi _2} \big] M(0,\xi_2)
		+\E \big[ \norma{\bfB^{(1)}}^{\xi _1} \norma{\bfB^{(2)}}^{\xi _2} \big]	\\
		& \leq \psi(\bb \xi) \big( M(\xi_1-\epsilon, \xi_2) + M(\xi_1 - \epsilon,0) + M(0,\xi_2) +1 \big)
		\end{align*}
	where $M(\xi_1, \xi_2)= \E \big[ \norma{\bfX ^{(1)}}^{\xi _1} \norma{\bfX ^{(2)}}^{\xi _2}  \big]$. By our assumptions, $(\xi_1, \xi_2) \in (0,1)^2$ and hence $\psi(\bb \xi)$ is finite by Lemma \ref{lem:properties.Psi.blocks}. The finiteness of $M(\xi_1 - \epsilon,0)$ and $M(0,\xi_2)$ follows from Theorem \ref{thm:moments:blocks} together with Lemma \ref{lem:properties.phi2}. Theorem \ref{thm:moments:blocks} also provides us with $M(\xi_1-\epsilon,\xi_2)<\infty$ if we can show that $\phi(\xi_1-\epsilon, \xi_2)<1$. This can be seen as follows. Since $\nabla \phi(\bb \xi^*)$ is parallel to (1,1), we may expand $\phi$ locally around $\xi^*$ by
		$$ \phi(\bb \xi^*+\bb h) = 1 + \skalar{\bb h, (1,1)} + o(\bb h),$$
		which gives in particular that
		$$ \phi(\xi_1^*-\epsilon, \xi_2^*+\delta) <1, \quad \phi(\xi_1^*+\delta, \xi_2^*-\epsilon)<1$$
		for sufficiently small $0< \delta < \epsilon$.
		Hence, by our choice of $\delta$ and $\epsilon$, $\phi (\xi _1 - \epsilon , \xi _2)<1$ as well as $\phi (\xi _1 , \xi _2-\epsilon )<1$. This concludes the proof that $E_1<\infty$. 	
	A bound for $E_2$ is obtained in the same way.
\end{proof}

\begin{lemma}\label{lem:implicit.renewal}
	Let $f \in \bb H^{\epsilon}(\R^d \setminus [\mathsf{blocks}])$. It holds for any $k \in K$  that 
		\begin{equation}\label{eq:implicit.renewal}
			 e^{\skalar{\bb \xi^*, s}}\E [f(e^{-\mathbf{s}\slash 
			\a}k \bb X) ]= g_{\bb \xi^*} * \wt{\mathds{U}}_{\bb \xi^*}(\bb s, k).
		\end{equation}
		In particular, there is $C$ independent of $f$ such that
		\begin{equation}\label{eq:bound:exiE}
			 \sup_{\bb s, k} \abs{e^{\skalar{\bb \xi^*, s}}\E [f(e^{-\mathbf{s}\slash 
				\a}k \bb X)]} \le C \norm{g_{\bb \xi^*}}_{\dri} 
		\end{equation}
		 and moreover
		 \begin{equation}\label{eq:bound:logt.txi.E}
		 	 \sup_{t >1} \,  (\log t)^{1/2} t^{\xi^*_1+ \xi^*_2} \abs{\E\Big[ f\big(t^{-1/\bb \alpha} \bb X\big)\Big]} ~\le~  C \norm{g_{\bb \xi^*}}_{\dri}. 
		 \end{equation}
\end{lemma}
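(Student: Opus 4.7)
The plan is to follow Goldie's implicit renewal approach, now lifted to $\R^2 \times K$. Writing $F(\bb s, k) := e^{\skalar{\bb \xi^*, \bb s}} \E[f(e^{-\bb s/\bb \alpha} k \bb X)]$, the identity \eqref{eq:implicit.renewal} amounts to showing $F = g_{\bb \xi^*} * \wt{\mathds{U}}_{\bb \xi^*}$. First I derive the one-step equation $F = g_{\bb \xi^*} + F * \wt\mu_{\bb \xi^*}$. Plugging $\bb X \eqdist \bb A\bb X + \bb B$ into $F$ and recognizing the defining difference of $g_{\bb \xi^*}$, the remaining term is $e^{\skalar{\bb \xi^*, \bb s}}\E[f(e^{-\bb s/\bb \alpha}k\bb A\bb X)]$. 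Using the decomposition $\bb A = e^{\bb U/\bb \alpha}\bb K$ from \eqref{eq:decomp}, conditioning on $(\bb U, \bb K) \sim \wt\mu$, and substituting $\bb u \mapsto \bb s - \bb u$ pulls out a factor $e^{\skalar{\bb \xi^*, \bb u}}$ that absorbs into $\wt\mu_{\bb \xi^*}$, producing $(F * \wt\mu_{\bb \xi^*})(\bb s, k)$ as desired.

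Iterating yields $F = \sum_{n=0}^{N-1} g_{\bb \xi^*}*\wt\mu_{\bb \xi^*}^{*n} + F*\wt\mu_{\bb \xi^*}^{*N}$, and by induction on $N$ one checks that $(F*\wt\mu_{\bb \xi^*}^{*N})(\bb s, k) = e^{\skalar{\bb \xi^*, \bb s}}\E[f(e^{-\bb s/\bb \alpha} k \bb A_1 \cdots \bb A_N \bb X_{>N})]$, using the a.s.\ representation $\bb A_1\cdots \bb A_N = e^{\bb S_N/\bb \alpha}\bb L_N$ with $\bb X_{>N}\eqdist \bb X$ independent of it. The main step is to show the remainder vanishes pointwise as $N\to\infty$. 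Since $\supp f$ is bounded away from $[\mathsf{blocks}]$ by some $\eta_1, \eta_2>0$, the integrand vanishes unless $\norma{\bb X_{>N}^{(i)}} > \eta_i e^{s_i - S_{N,i}}$ for both $i$. Picking $\bb \xi' = \bb \xi^* - \epsilon(1,1) \in (0,1)^2$ for small $\epsilon>0$ yields $\phi(\bb \xi') < 1$ since $\nabla\phi(\bb \xi^*)\parallel(1,1)$ has positive components; a Markov bound together with the moment estimate of Theorem \ref{thm:moments:blocks} then dominates the remainder by $C\, e^{\skalar{\bb \xi^*-\bb \xi',\bb s}} \phi(\bb \xi')^N$, which tends to $0$ geometrically. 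Absolute convergence of $\sum_{n\ge 0} g_{\bb \xi^*}*\wt\mu_{\bb \xi^*}^{*n}$ follows from $\|g_{\bb \xi^*}\|_{\dri}<\infty$ (Proposition \ref{prop:gdri}) combined with the uniform renewal bound of Section \ref{sect:2dRenewal.with.K}, and we conclude $F = g_{\bb \xi^*}*\wt{\mathds{U}}_{\bb \xi^*}$.

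Both bounds are direct consequences of the two-dimensional renewal theorem on $\R^2 \times K$ developed in Section \ref{sect:2dRenewal.with.K}. That theorem furnishes the uniform estimate $\sup_{\bb s, k}|g*\wt{\mathds{U}}_{\bb \xi^*}(\bb s, k)| \le C\|g\|_{\dri}$ for directly Riemann integrable $g$, which gives \eqref{eq:bound:exiE} at once. For \eqref{eq:bound:logt.txi.E}, evaluating the representation along the diagonal $\bb s = (\log t,\log t)$ at the identity of $K$ gives $F((\log t,\log t), \cdot) = t^{\xi_1^*+\xi_2^*}\E[f(t^{-1/\bb \alpha}\bb X)]$, and the sharp $O(\|\bb s\|^{-1/2})$ decay of $g*\wt{\mathds{U}}_{\bb \xi^*}(\bb s,\cdot)$ as $\bb s\to\infty$ in the drift direction $\nabla\phi(\bb \xi^*)\parallel(1,1)$ supplies the required $(\log t)^{-1/2}$ factor. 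The principal obstacles are the pointwise vanishing of the remainder --- handled by a Markov argument whose feasibility relies on the geometry of $D$ at $\bb \xi^*$ --- and the sharp $(\log t)^{1/2}$-scaling in the diagonal bound, which is the content of the two-dimensional renewal theorem postponed to Section \ref{sect:2dRenewal.with.K}.
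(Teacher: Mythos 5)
Your proof is correct and follows essentially the same route as the paper: establish the one-step identity $F = g_{\bb \xi^*} + F*\wt\mu_{\bb \xi^*}$, iterate, show the remainder $F*\wt\mu_{\bb \xi^*}^{*N}$ vanishes pointwise, and then deduce both bounds from Lemma \ref{lem:dRi.proof.of.Renewal.Theorem}. The one noteworthy (but inessential) difference is in how you kill the remainder: the paper simply observes that $S_{n,j}\to -\infty$ a.s.\ (from \eqref{eq:logAi}) so that $f(e^{-\bb s/\bb \alpha}k\bb A_1\cdots\bb A_n\bb X)\to 0$ a.s., and then invokes dominated convergence; you instead run a Markov bound at $\bb \xi' = \bb \xi^*-\epsilon(1,1)$ with the moment estimate of Theorem \ref{thm:moments:blocks}, obtaining a geometric rate $\phi(\bb \xi')^N$ --- which is correct (since $\nabla\phi(\bb \xi^*)$ is a positive multiple of $(1,1)$, so $\phi(\bb \xi')<1$) but requires more machinery than the paper's softer argument.
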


\begin{proof}
	Let $f$ be in $ \bb H^\epsilon(\R^d \setminus [\mathsf{blocs}])$ and set $\bar f(\mathbf{s},k)= \E [f(e^{-\mathbf{s}\slash 
		\bb \a}k\bb X)]$. Then
	$$\bar f*\wt \mu (\mathbf{s},k)=\sum _{k\in K} \int _{\R ^2}\E [f(e^{-(\mathbf{s} -\mathbf{r})\slash 
		\bb \a}kk'\bb X)]\ d\wt \mu(\mathbf{r},k')=\E[ f(e^{-\mathbf{s}\slash 
		\bb \a}k\bfA \bfX)]$$
	Similarly, for any $n \in \N$,
	$$\bar f*\wt \mu ^n(\mathbf{s},k)= \E [f(e^{-\mathbf{s}\slash 
		\bb \a}k\bfA _1...\bfA _n\bfX)]= \E [f(e^{(-\bb s + \bb S_n)/\bb \alpha}  k\bb L_n \bb X )],$$
		where $\bb L_n=  \prod_{i=1}^n \bb K_i$.
		Using that $\lim_{n \to \infty} S_{n,j}=-\infty$ $\P$-a.s. (as a consequence of \eqref{eq:logAi}) and that the support of $f$ is bounded away from $[\mathsf{blocks}]$ and thus also bounded away from $0$, it follows that
		\begin{equation}
			 \lim_{n \to \infty} \bar f*\wt \mu ^n(\mathbf{s},k) =0 \label{eq:fmun:to.zero}
		\end{equation}
		for all $\bb s \in \R^2$ and $k \in K$ by an appeal to the dominated convergence theorem.
		Observe further that for any bounded function $h: \R^2 \times K\to \R$, upon defining $h_{\bb \xi^*}(\bb s,k):=e^{\skalar{\bb \xi^*, \bb s}}h(\bb s,k)$ it holds that
		\begin{align*}
			e^{\skalar{\bb \xi^*, \bb s}} h * \wt{\mu}(\bb s,k) &= \sum_{k' \in K} \int_{\R^2} e^{\skalar{\bb \xi^*, \bb s}} h(\bb s - \bb r, kk') d\wt{\mu}(\bb r, k') \\
			& = \sum_{k' \in K} \int_{\R^2} e^{\skalar{\bb \xi^*, (\bb s-\bb r)}} h(\bb s - \bb r, kk') e^{\skalar{\bb \xi^*, \bb r}} d\wt{\mu}(\bb r, k') = h_{\bb \xi^*} * \wt{\mu}_{\bb \xi^*}(\bb s,k)
		\end{align*}
		Hence
	\begin{align*}
		g_{\bb \xi^*}(\mathbf{s},k)&=e^{\skalar{\bb \xi^*, \bb s}} \big(\bar f(\mathbf{s},k)-\bar f*\wt \mu (\mathbf{s},k) \big) = \bar{f}_{\bb \xi^*}(\mathbf{s},k)-\bar{f}_{\bb \xi^*}*\wt{\mu}_{\bb \xi^*} (\mathbf{s},k)
	\end{align*}
	and 
	\begin{align*}
		&g _{\bb \xi^*}*\left (\sum _{n=0}^N   \wt{\mu} _{\bb \xi^*}^n\right )(\mathbf{s},k)=	\sum _{n=0}^N   g _{\bb \xi^*}*\wt{\mu} _{\bb \xi^*}^n(\mathbf{s},k)=e^{\langle \bb \xi^* , \mathbf{s}\rangle }
		(\bar f(\mathbf{s},k) - \bar f *\wt \mu ^N(\mathbf{s},k))	
	\end{align*}
	 By \eqref{eq:bound:dRi.any} of Lemma \ref{lem:dRi.proof.of.Renewal.Theorem} we have that $\sup_{\bb s, k} \abs{g_{\bb \xi^*}} * \wt{\mathds{U}}_{\bb \xi^*}(\bb s, k)< \infty$ and hence also $g _{\bb \xi^*}*\left (\sum _{n=0}^N \wt{\mu} _{\bb \xi^*}^n\right )$ is uniformly bounded and converges to  $g_{\bb \xi^*}*\wt{\mathbb{U}}_{\bb \xi^*}$. Using \eqref{eq:fmun:to.zero} we conclude that 
	\begin{equation*}
		g _{\bb \xi^* }*\wt{\mathbb{U}}_{\bb \xi^*}=\bar f _{\bb \xi^*},
	\end{equation*}
	which proves \eqref{eq:implicit.renewal} and (with the help of \eqref{eq:bound:dRi.any}) also \eqref{eq:bound:exiE}. In the same way, \eqref{eq:bound:logt.txi.E} follows from \eqref{eq:bound:dRi.rho}, using that $\bb \rho=\bb m_{\bb \xi^*}$ is parallel to $(1,1)$ by Lemma \ref{lem:properties.phi2}.
\end{proof}

Now we may conclude the assertions of Proposition \ref{prop:gdri}.

\begin{proof}[Proof of Proposition \ref{prop:gdri}]
	Considering \eqref{eq:dri},  the inequality $1+\abs{s_i}\leq \delta^{-1} e^{\d |s_i|}$ (valid for $\delta \in (0,1)$)
	allows us to bound 
	$$(1+\norm{s}) g_{\bb \xi^*}(\bb s,k) \le C\big( g_{\bb \xi* + \delta \bb e_1} (\bb s,k) + g_{\bb \xi* - \delta \bb e_1} (\bb s,k) + g_{\bb \xi* + \delta \bb e_2} (\bb s,k) + g_{\bb \xi* - \delta \bb e_2} (\bb s,k) \big).$$
	Hence \eqref{eq:dri} follows from Lemma \ref{lem:dri}.
	The bound \eqref{eq:boundLimitDRI} is proved in Lemma \ref{lem:implicit.renewal}. The bound \eqref{eq:boundLimitDRIwithM} then follows from \eqref{eq:boundLimitDRI} by an application of Lemma \ref{lem:dri},  \eqref{eq:dRiwithM}.	
\end{proof}

\begin{proof}[Proof of Theorem \ref{thm:main:blocks}]
	Abbreviate $\chi(t):=(\log t)^{1/2} t^{\xi^*_1+\xi^*_2}$.
	In order to prove \eqref{eq:precise}, we have to show that $\lim_{t \to \infty} \chi(t) \E \big[ f(t^{-1/\bb \alpha} \bb X)\big]$ exists for all $f \in \mathcal{C}_c(\R^d \setminus [\mathsf{blocks}])$. This then gives the existence of a Radon measure $\Lambda_2$ on $\R^d \setminus [\mathsf{blocks}]$. 
	
	\medskip
	
	\textbf{Proof of \eqref{eq:precise} for Hölder functions}.
	By Lemma \ref{lem:implicit.renewal} and the two-dimensional renewal theorem with group $K$,  Theorem \ref{thm:renewalextended},  we have for any $f \in \bb H^{\epsilon}(\R^d \setminus[\mathsf{blocks}])$ and for any $k_1 \in K$ that
	\begin{align*}
		\lim _{r\to \8}r^{1\slash 2}\bar f _{\bb \xi^*}(rm_{\bb \xi^*}, k_1 )=c\sum _{k\in K}\int _{\R ^2}g _{\bb \xi^* }(\mathbf{s},k) \ d\mathbf{s}
	\end{align*}
	with $c$ independent of $f$.
	 Using that $\nabla \phi (\bb \xi^* )$ is proportional to $(1,1)$ by Lemma \ref{lem:properties.phi2},
	we have 
		\begin{align}\label{eq:limrenewal}
			\lim_{t\to \8} r^{1\slash 2}e^{(\xi^* _1+\xi^* _2)r}\bar f (r (1,1), k_1)=\wt c\sum _{k\in K}\int _{\R ^2}g _{\bb \xi^*} (\mathbf{s},k) \ d\mathbf{s}
		\end{align}
		with a different constant. Let $r=\log t$. We have
	\begin{equation*}
		\bar f ((\log t ) (1,1))=\E [f (e^{-\log t \slash \a _1}X_1,...,e^{-\log t \slash \a _d}X_d)]=\E [f(t^{-1\slash \bb \a}\bfX)]
	\end{equation*}
	 and so
	\begin{equation}\label{eq:drilim}
	\lim _{t\to \8}(\log t)t^{\xi^* _1+\xi^* _2}\E [f(t^{-1\slash \bb \a}\bfX)]=
	\wt c\sum _{k\in K}\int _{\R ^2}g _{\bb \xi^*} (\mathbf{s},k) \ d\mathbf{s}.
	\end{equation}
	which proves \eqref{eq:precise} for H\"older functions. 
	
	\medskip
	
	\textbf{Proof of \eqref{eq:precise} for continuous functions with compact support, Existence of $\Lambda_2$}. Consider a compact set $K \subset \R^d \setminus [\mathsf{blocks}]$. Choosing $g \in \bb H^{\epsilon}(\R^d \setminus[\mathsf{blocks}])$ such that $g \ge \Ind{K}$, we obtain 
	\begin{equation}\label{eq:bound Compact set}
		 \limsup_{t \to \infty} \chi(t) \E \Big[  \Ind{K}(t^{-1/\bb \alpha} \bb X)\Big] \le \lim_{t\to \8} \chi(t) \E [ g(t^{-1/\alpha} \bb X)] \le C_K< \infty.
	\end{equation}
	Now for any $f \in \mathcal{C}_c(\R^d \setminus [\mathsf{blocks}])$ and any $\delta>0$ there are  $g, h \in \bb H^{\eps}(\R^d \setminus[\mathsf{blocks}])$ the support of which is contained in a compact set $K \subset \R^d \setminus [\mathsf{blocks}]$ and such that $g \le f \le h$ and $ \norm{g -h}_\infty < \delta / C_K.$
	We have
	\begin{align*} \lim_{t\to \8} \chi(t) \E \big[ g(t^{-1/\bb \alpha} \bb X) \big] \le& \liminf_{t\to \8} \chi(t) \E \big[ f(t^{-1/\bb \alpha} \bb X) \big]\\ \le& \limsup_{t\to \8} \chi(t) \E \big[ f(t^{-1/\bb \alpha} \bb X) \big] \le \lim_{t\to \8} \chi(t) \E \big[ h(t^{-1/\bb \alpha} \bb X) \big]
	\end{align*}
	and by \eqref{eq:bound Compact set} it follows that
	$$ \limsup_{t \to \infty} \abs{ \chi(t) \E \big[ g(t^{-1/\bb \alpha} \bb X)\big] - \chi(t) \E \big[ h(t^{-1/\bb \alpha} \bb X)\big]} \le \limsup_{t \to \infty} \norm{g-h}_\infty \chi(t) \E \Big[  \Ind{K}(t^{-1/\bb \alpha} \bb X)\Big] < \delta. $$
	This proves the existence of $\lim_{t \to \infty} \E \big[ f(t^{-1/\bb \alpha} \bb X) \big]$ for any $f \in \mathcal{C}_c(\R^d \setminus [\mathsf{blocks}])$. It follows  (cf. e.g. \cite[Chapter 7]{Folland1999} that there is a unique Radon (i.e., locally finite) measure $\Lambda_2$ on $\R^d \setminus [\mathsf{blocks}]$ such that
	$$ \lim_{t \to \infty} \chi(t) \E \big[ f(t^{-1/\bb \alpha}\bb X)\big]= \int f(\bb x) \Lambda_2(d\bb x) $$
for any $f \in \mathcal{C}_c(\R^d \setminus [\mathsf{blocks}])$.

\medskip

\textbf{Proof of \eqref{eq:precise} for continuous functions}. 
We now extend the convergence to $\bb C( \R^d \setminus [\mathsf{blocks}])$. By writing $f=f^+ -(-f)^+$ it suffices to prove the result for nonnegative functions. Given nonnegative $f \in \bb C( \R^d \setminus [\mathsf{blocks}])$, there are $\eta_i$ such that $\supp f \subset \{ \bb x \, :\, \norma{\bb x^{(i)}} > \eta_i \, i \in \{1,2\}\}.$
For any $R>0$ we may decompose 
$$ f =f_{1;R} + f_{2;R} + f_{3;R}$$
such that $f_{1;R} \in \mathcal{C}_c(\R^d \setminus [\mathsf{blocks}])$ and $$\supp f_{2;R} \subset \{ \bb x \, :\, \norma{\bb x^{(1)}} > R/\eta_2, \norma{\bb x^{(2)}}> \eta_2\}, \ \supp f_{3;R} \subset \{ \bb x \, :\, \norma{\bb x^{(1)}} > \eta_1, \norma{\bb x^{(1)}}> R/\eta_1\}.$$ 
Consider now $g,h \in \bb H^\epsilon(\R^d \setminus [\mathsf{blocks}])$ such that 
$$ g \ge \norm{f}_\infty \Ind{\{ \bb x \, : \, \norma{\bb x^{((i))}} > \eta_2 \, i \in \{1,2\} \}}, \quad h \ge \norm{f}_\infty \Ind{\{ \bb x \, : \, \norma{\bb x^{(i)}} > \eta_1 \, i \in \{1,2\} \}}.$$
Recalling \eqref{eq:fM}, it holds that $\abs{f_{2;R}} \le g_{R,1}$ and $\abs{f_{3;R}} \le h_{1,R}$. By Lemma \ref{lem:implicit.renewal} and \eqref{eq:dRiwithM}  it follows that 
	$$ \chi(t) \abs{ \E \big[ f_{2;R}(t^{-1/\alpha} \bb X)\big]} \le C_f R^{-\xi^*_1}, \qquad  \chi(t) \abs{ \E \big[ f_{3;R}(t^{-1/\alpha} \bb X)\big]} \le C_f R^{-\xi^*_2}$$
	for all $t>0$ and $R>0$.
	Hence as well  $\abs{\int f_{2;R}(\bb x) \Lambda_2(d \bb x)} \le C_f R^{-\xi^*_1} $, $\abs{\int f_{3;R}(\bb x) \Lambda_2(d \bb x)} \le C_f R^{-\xi^*_2} $. This proves in particular that $\int f(\bb x) \Lambda_2(d \bb x)$ is well defined.
	
	Now for any $\delta>0$, choose $R$ such that $C_fR^{-\xi^*_2}, C_f R^{-\xi^*_3} < \delta/5$ and thereupon $t_0$ such that $\abs{\chi(t)\E \big[f_{1;R}(t^{-1/\bb \alpha} \bb X) \big] - \int f_{1;R}(\bb x) \Lambda_2(d \bb x)} < \delta/5$ for all $t\ge t_0$. Then, for all $t \ge t_0$,
	\begin{align*}
		&~\Big| \chi(t) \E \big[ f(t^{-1/\bb \alpha} \bb X) - \int f(\bb x) \Lambda_2(d \bb x)\big]\Big| \\
			\le &~\Big| \chi(t) \E \big[ f(t^{-1/\bb \alpha} \bb X) - \chi(t) \E \big[ f_{1;R}(t^{-1/\bb \alpha} \bb X)\big]\Big| + \Big| \chi(t) \E \big[ f_{1;R}(t^{-1/\bb \alpha} \bb X) - \int f_{1;R}(\bb x) \Lambda_2(d \bb x)\big]\Big| \\
			&~ +  \Big|\int f_{1;R}(\bb x) \Lambda_2(d \bb x) - \int f(\bb x) \Lambda_2(d \bb x)\big]\Big| \\
			\le &~ \chi(t) \Big| \E \big[ f_{2;R}(t^{-1/\alpha} \bb X)\big]\Big| +  \chi(t) \Big| \E \big[ f_{3;R}(t^{-1/\alpha} \bb X)\big]\Big| + \frac{\delta}{5} + \Big|\int f_{2;R}(\bb x) \Lambda_2(d \bb x)\Big| +\Big|\int f_{3;R}(\bb x) \Lambda_2(d \bb x)\Big| \\
			< &~ \delta.
	\end{align*}
	We have thus proved \eqref{eq:precise} in full generality.

	\medskip
	
	\textbf{Proof of \eqref{eq:measure1}}.	Employing the same approximation as above, it suffices to prove \eqref{eq:measure1}  for $f \in \bb H^\epsilon(\R^d \setminus[\mathsf{blocks}])$. This will be done as the next step. 
	Consider any $f \in \bb H^\epsilon(\R^d \setminus[\mathsf{blocks}])$.
	\begin{align*}
	&\int f(k \bb x) \Lambda_2(d \bb x) =\lim _{t\to \8} \chi(t)\E [f(t^{-1\slash \bb \a}k \bfX)]\\
	 =& \lim _{t\to \8} \chi(t)	\bar f ((\log t ) (1,1), k)=
	 \lim _{t\to \8}\chi(t) \bar f ((\log t ) (1,1))\\
	 =&\lim _{t\to \8}\chi(t) \E [f(t^{-1\slash \bb \a}\bfX)] = \int f(\bb x) \Lambda_2(d \bb x)
	\end{align*}
	Hence the first part of \eqref{eq:measure1} follows.
	Let now $f_r(\bb x)=f ( r^{1\slash \bb \a } \bb x )$.
			On one hand,
		\begin{equation*}
			\lim _{t\to \8}(\log t)^{1\slash 2}t^{\xi^* _1+\xi^* _2}\E [f_r (t^{-1\slash \bb \a}X )]=\int f(r^{1/\bb \alpha} \bb x) \Lambda_2(d \bb x)
		\end{equation*}
		and on the other,
		\begin{align*}
			&~\lim _{t\to \8}(\log t)^{1\slash 2}t^{\xi^* _1+\xi^* _2}\E [f _r (t^{-1\slash \bb \a}X  )]\\ =&~\lim _{t\to \8}\frac{(\log (t))^{1\slash 2}}{(\log (r^{-1}t))^{1\slash 2}}(\log (r^{-1}t))^{1\slash 2}r^{\xi^* _1+\xi^* _2}(r^{-1}t)^{\xi^* _1+\xi^* _2}\E[f ((r^{-1}t)^{-1\slash \bb \a}\bfX )]\\
			=&~r^{\xi^* _1+\xi^* _2} \int f(\bb x) \Lambda(d \bb x) ,
		\end{align*}
		which proves \eqref{eq:measure1}.
		 As a consequence, we obtain
					\begin{equation}\label{eq:measure2}
			\limsup _{t\to \8} (\log t)^{1\slash 2}t^{\xi^* _1+\xi^* _2}\P \left (\|\bfX ^{(i)}\| _{\bb \a }>rt,\ i=1,2\right ) \leq Cr^{-\xi^* _1-\xi^* _2}.
		\end{equation}
		
		Namely, consider  $f\in \bb H^\epsilon(\R^d \setminus [\mathsf{blocks}])$, $\supp f\subset \{ \mathbf{x}: \|\mathbf{x}^{(i)}\| _{\bb \a}\geq 1\slash 2\}$
		and $f= 1$ on the set  $W= \{ \mathbf{x}: \|\mathbf{x}^{(i)}\| _{\bb \a}\geq 1, i=1,2\}$.
		Then $\Ind W\leq f$ and so 
		\begin{equation*}
			\P \left (\|\bfX ^{(i)}\|_{\bb \a }>t,\ i=1,2	\right )= \E [\Ind W (t^{-1\slash \bb \a}\bfX  )]\leq 
			\E [f (t^{-1\slash \bb \a}\bfX )]
		\end{equation*}
		Hence
		\begin{equation*}
			\limsup _{t\to \8} \chi(t)\P \left ( \|\bfX ^{(i)}\|_{\bb \a }>t,\ i=1,2
			\right )<\lim _{t\to \8} \chi(t) \E[f (t^{-1\slash \a}\bfX  )]=: C.
		\end{equation*}
		Using \eqref{eq:measure2}, 
		\begin{equation*}
			\limsup _{t\to \8} \chi(t)\P \left ( \|\bfX ^{(i)}\|_{\bb\a }>rt,\ i=1,2
			\right )\leq \lim _{t\to \8} \chi(t)\E [f\ _{r^{-1}} (t^{-1\slash \bb \a}X  )]= Cr^{-\xi^* _1-\xi^* _2} .
		\end{equation*}
		
		\medskip
		
		\textbf{Proof of \eqref{eq:measure3}}.		
		Now we shall prove \eqref{eq:measure3}. Let $0<\d <1$, 
		$\phi \in \Cf_c(\wt S_1) $ and let $g$ be a continuous function on $(0,\8)$, $\supp g\subset (1-\frac{\d}{2} , \8)$, $g(s)=1$ for $s\geq 1$. We define $f$ writing in polar coordinates (see \eqref{polar}),
		\begin{align*}
			&f(s,\omega)= g(s)\phi (\omega )\\
			&f_{1+\d}(s,\omega)= g((1+\d)s)\phi (\omega )\\
			&f_{1-\d}(s,\omega)= g((1-\d)s)\phi (\omega ).
		\end{align*}   
		Observe that $g((1- \d)s)=0$ for $s\leq 1+ \d \slash 2$ and $g((1+ \d)s)\geq 1$ for $s\geq 1- \d \slash 2$. Hence, 
		\begin{equation*}
			f_{1+\d}(s,\omega) - f_{1-\d}(s,\omega)\geq \Ind {[1-\d \slash 2, 1+\d \slash 2 ]} (s)\phi (\omega).
		\end{equation*}
		Given $\eps>0 $ we take $\d $ such that $(1+\d) ^{\xi^* _1+\xi^* _2}-(1-\d) ^{\xi^* _1+\xi^* _2}\leq \eps $.
		We write  $\bfX$ in polar coordinates as
		$$
		\bfX=\Theta (\|\bfX\|_{\a}, \omega _\bfX), \quad  \omega_{\bfX}\in S^{d-1}. $$
				Then
	 	\begin{align*}
		\int \Ind{\wt S_1}(\bb x) \phi(\bb x) \Lambda_2(d \bb x)  &\leq	\limsup _{t\to \8}(\log t)^{1\slash 2}t^{\xi^* _1+\xi^* _2}\E[ \Ind {\left [ 1-\d \slash 2, 1+\d \slash 2 \right ]} (t^{-1\slash \bb \a}\|\bfX\|_{\a} )\phi (\omega _\bfX )]\\
		&\leq \int f_{1+\d}\,d\Lambda _2 - \int f_{1-\d} \,d\Lambda _2 
			= \left ( (1+\d) ^{\xi^* _1+\xi^* _2}-(1-\d) ^{\xi^* _1+\xi^* _2}\right )\int f\, d\Lambda _2 \\
			&\leq \eps \int f\, d\Lambda _2,
		\end{align*}
		and \eqref{eq:measure3} follows.
		
		\medskip
		
		\textbf{Proof of \eqref{eq:measure4}}.	
		To show \eqref{eq:measure4}, we take $\phi \in \Cf_c(\wt S _1)$ and 
			$f(s,\omega )=f(s^{1\slash \bb \a}\omega)=\Ind { [1,\8)}(s)\phi (\omega )$. 
	Then, by \eqref{eq:measure3}, $\Lambda _2 (\{ \mathbf{x}: f \ \mbox{is not continuous at}\ \mathbf{x} \})=0$ and by the Portmanteau Theorem, 
	\begin{equation}\label{eq:nu}
		\lim _{t\to \8}(\log t)^{1\slash 2}t^{\xi^* _1+\xi^* _2}\E [f(t^{-1\slash \bb \a}\bfX)]=\int f\, d\Lambda _2,
	\end{equation}
	which is finite by \eqref{eq:measure2}.
	Hence \eqref{eq:nu} defines a positive functional on $\Cf_c(\wt S_1)$ i.e. there is a Radon measure $\nu $ on $\wt S_1$ such that $\int f\, d\Lambda _2 =\int \phi\,  d\nu$.
	
	Similarly, for
	$f_r(s,\omega)=\Ind { [r,\8)}(s)\phi (\omega )=f_1\circ r^{-1\slash \bb \a}$,
	\begin{eqnarray*}
	\lim _{t\to \8}(\log t)^{1\slash 2}t^{\xi^* _1+\xi^* _2}\E [f_r(t^{-1\slash \bb \a}\bfX )]=\int f_r\, d\Lambda _2
	\end{eqnarray*}
	exists and
	$\int f_r\,d\Lambda _2 =r^{-\xi^* _1-\xi^* _2}\int f_r\, d\Lambda _2= r^{-\xi^* _1-\xi^* _2}\int \phi \, d\nu $.
	
Therefore,  
\begin{align*}
&\int f_r\ d\L _2 =r^{-\xi^* _1-\xi^* _2}\int \phi\, d \nu = (\xi^* _1+\xi^* _2)\int _r^{\8}\frac{ds}{s^{\xi^* _1+\xi^* _2+1}} \int \phi\, d\nu\\
& = (\xi^* _1+\xi^* _2)\int _0^{\8}\mathbf{1} _{[r,\8)}(s)\phi (\omega )\frac{ds}{s^{\xi^* _1+\xi^* _2+1}} d\nu (\omega )=(\xi^* _1+\xi^* _2)\int _0^{\8}f_r(s^{1\slash \a }\omega )\frac{ds }{s^{\xi^* _1+\xi^* _2+1}}d \nu (\omega) , 
\end{align*}
which implies \eqref{eq:measure4}.

\medskip

\textbf{Proof of invariance properties of $\nu$}.
The $K$-invariance of $\nu$ is a direct consequence of the $K$-invariance of $\Lambda_2$, provided in the first part of \eqref{eq:measure1}.
Since bounded continuous functions determine the measure uniquely, we also have \eqref{eq:measure4} for bounded measurable functions.

\medskip

\textbf{Proof that $\Lambda_2$ and $\nu$ are nonzero, and that $\nu$ is unbounded}. These final assertions will be proved in Section \ref{sect:positivity} below, where we finish the proof of Theorem \ref{thm:main:blocks}.
\end{proof}

\section{Positivity of the Limit Measure}\label{sect:positivity}

In this section we will prove that a) the measure $\Lambda_2$ is nonzero, and that b) its spectral measure $\nu$ is unbounded with explosion at $[\mathsf{blocks}]$. Both assertions will follow from Theorem \ref{thm:positivity:measure}. The nontriviality of $\Lambda_2$ is a direct consequence of \eqref{eq:positivity}. The same result will also be used in the proof of Proposition \ref{thm:nu:unbounded} below to deduce that $\nu$ is unbounded.

\begin{theorem}\label{thm:positivity:measure}
	Assume  that for every $R>0$,
	\begin{equation}\label{eq:R}
		\P \left (\norma{\bfX ^{(1)}}>R, \norma{\bfX ^{(2)}}>R  \right )=c_R>0.
	\end{equation}
	Then for $\bb \xi^*$ as in Theorem \ref{thm:main:blocks}, there is $c>0$ such that for every $\eps >0$
	\begin{equation}\label{eq:positivity}
		\liminf _{t\to \8}\,	(\log t)^{1\slash 2}t ^{\xi^* _1 +\xi^* _2}\P \left (\norma{\bfX ^{(1)}}>t, \norma{\bfX ^{(2)}}>\eps t \right )
		>c\eps ^{-\xi^* _2}.
	\end{equation}
\end{theorem}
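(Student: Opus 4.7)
I propose to prove \eqref{eq:positivity} by a direct probabilistic construction combining the perpetuity decomposition of $\bb X$ with the Esscher change of measure at $\bb \xi^*$ and the two-dimensional local limit theorem for $\mu_{\bb \xi^*}$. Under $\P_{\bb \xi^*}$, by Lemma \ref{lem:properties.phi2} the random walk $\bb S_n$ has mean $\bb m_{\bb \xi^*} = \mu^*(1,1)$ parallel to the diagonal with $\mu^* > 0$, while Cram\'er's condition \eqref{assump:cramer} provides the required Edgeworth expansion. The positivity hypothesis \eqref{eq:R} then supplies a ``seed'' probability that is propagated to the prescribed scale $(t, \epsilon t)$ by the random walk.

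\smallskip

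For each $n \geq 1$, write $\bb X = \bb A_1 \cdots \bb A_n \bb X_{>n} + \bb X_{\leq n}$ with $\bb X_{>n} \eqdist \bb X$ independent of $(\bb A_i, \bb B_i)_{i \leq n}$. Using the identity $\norma{(\bb A_1 \cdots \bb A_n)^{(j)}\bb y^{(j)}} = e^{S_{n,j}}\norma{\bb y^{(j)}}$ and the quasi-triangle inequality \eqref{eq:norm:subadditiv}, one fixes a constant $\kappa = \kappa(c_{\bb \alpha}, R)$ so that the event
\[
\Omega_n(t) := \bigl\{\bb S_n - (\log t, \log(\epsilon t)) \in \kappa(1,1) + [0,1]^2 \bigr\} \cap \bigl\{\norma{\bb X_{>n}^{(j)}} > R,\ j=1,2\bigr\} \cap \bigl\{\norma{\bb X_{\leq n}^{(j)}} \leq \tfrac12 \delta_j t\bigr\},
\]
with $\delta_1 = 1$, $\delta_2 = \epsilon$, implies $\{\norma{\bb X^{(1)}} > t, \norma{\bb X^{(2)}} > \epsilon t\}$. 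Hence $\P(\norma{\bb X^{(1)}} > t, \norma{\bb X^{(2)}} > \epsilon t) \geq \P(\bigcup_n \Omega_n(t))$.

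\smallskip

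To estimate $\sum_n \P(\Omega_n(t))$, I would apply the Esscher transform on the prefix $\sigma$-algebra. On the cube $\{\bb S_n - (\log t, \log(\epsilon t)) \in \kappa(1,1) + [0,1]^2\}$ the weight $e^{-\langle \bb \xi^*, \bb S_n \rangle}$ has exact order $t^{-\xi_1^* - \xi_2^*}\epsilon^{-\xi_2^*}$. The independence of $\bb X_{>n}$ from the prefix together with \eqref{eq:R} contributes a factor $\geq c_R > 0$, while the joint event on $\bb X_{\leq n}$ is handled by Markov's inequality combined with moment bounds as in Section \ref{sect:moments}, applied after a small perturbation of $\bb \xi^*$ in the direction $-(1,1)$, along which $\phi$ drops strictly below one (permissible since $\phi(\bb \xi^*) = 1$ and $\nabla \phi(\bb \xi^*) \parallel (1,1)$ with positive proportionality constant). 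The two-dimensional Edgeworth expansion under $\P_{\bb \xi^*}$ then yields
\[
\sum_n \P_{\bb \xi^*}\bigl( \bb S_n \in (\log t, \log(\epsilon t)) + \kappa(1,1) + [0,1]^2 \bigr) \;\sim\; c_* (\log t)^{-1/2},
\]
with the sum concentrated on a window of $n$ of width $O(\sqrt{\log t})$ around $n_0 = \log t / \mu^*$. Combining these, $\sum_n \P(\Omega_n(t)) \geq c\, t^{-\xi_1^* - \xi_2^*}\epsilon^{-\xi_2^*}(\log t)^{-1/2}$.

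\smallskip

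Finally, to pass from $\sum_n \P(\Omega_n(t))$ to $\P(\bigcup_n \Omega_n(t))$, observe that $\bb S_n$ has negative drift under $\P$ by \eqref{eq:logAi}, so the renewal measure of any bounded cube in $\R^2$ is uniformly bounded; consequently, for $N_t := \sum_n \Ind{\Omega_n(t)}$ one has $\E[N_t \mid N_t \geq 1] \leq C$ and thus $\P(N_t \geq 1) \geq C^{-1}\E[N_t]$. Multiplying by $(\log t)^{1/2} t^{\xi_1^* + \xi_2^*}$ yields \eqref{eq:positivity}. The main obstacle is the joint control of $\bb S_n$ and $\bb X_{\leq n}$ under the Esscher-transformed law: both depend on $(\bb A_1, \dots, \bb A_n)$, and the transform favours large $|\bb A_i|$, so a careful moment estimate (exploiting the perturbation of $\bb \xi^*$ described above) is needed to ensure that $\bb X_{\leq n}$ remains of smaller order than $t$ with positive probability uniformly across the relevant window $n \in n_0 + O(\sqrt{\log t})$.
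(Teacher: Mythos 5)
Your architecture matches the paper's: decompose $\bb X$ at time $n$, localize $\bb S_n$ in a unit cube near $(\log t,\log(\eps t))$ via the Esscher transform and a local CLT/Edgeworth bound, use \eqref{eq:R} as a seed on $\bb X_{>n}$, and sum over $n$ in a window of width $O(\sqrt{\log t})$. Your use of the Paley--Zygmund/second moment inequality to pass from $\E[N_t]$ to $\P(N_t\ge1)$ is a defensible alternative to the paper's route, which instead disjointifies ($W_n=\wt W_n\setminus\bigcup_{k<n}\wt W_k$) and then proves the lower bound on $\P(\bigcup W_n)$ via inclusion--exclusion with a coarsened grid $n=n_0+kM$ and a Chernoff bound $\P(\Pi_{(l-k)M}^{(i)}>e^{-1},\,i=1,2)\le\phi(\bb\zeta)^{(l-k)M}e^{\zeta_1+\zeta_2}$; if you try the second moment route you will need an analogous geometric decay for $\P(\Omega_n\cap\Omega_m)$, which is the same computation in different clothing.

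The genuine gap is the one you flag yourself at the end but do not resolve: the joint control of $\bb S_n$ and $\bb X_{\le n}$. Your proposal handles it with "Markov's inequality combined with moment bounds \ldots applied after a small perturbation of $\bb\xi^*$ in the direction $-(1,1)$," which is too vague to assess and does not obviously produce a bound that is small \emph{relative} to the lower bound from the cube events, uniformly in $\eps$. The paper's resolution is specific and worth knowing: one introduces the dominating perpetuity $Y_j=\sum_{k\ge1}\Pi_{k-1}^{(j)}\bigl(\norma{\bfB_k^{(j)}}+1\bigr)$ (or its $\gamma$-power variant when $\norma{\cdot}$ fails subadditivity), which simultaneously dominates $\Pi_n^{(j)}$ \emph{and} $\norma{\bb X_{\le n}^{(j)}}$ for every $n$. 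Hence the union of $\{\Pi_n^{(1)}>t,\Pi_n^{(2)}>\eps t\}\cap\{\norma{\bb X_{\le n}^{(j)}}>M\delta_j t\text{ for some }j\}$ is contained in $\{Y_1>Mt,Y_2>\eps t\}\cup\{Y_1>t,Y_2>M\eps t\}$. Since $\bb Y$ itself satisfies an SRE of the same type, one can invoke Proposition~\ref{prop:gdri} (specifically the $M$-scaled bound \eqref{eq:boundLimitDRIwithM}) to show this has order $\le CM^{-\min(\xi_1^*,\xi_2^*)}\eps^{-\xi_2^*}t^{-\xi_1^*-\xi_2^*}(\log t)^{-1/2}$, and then pick $M$ large to beat the constant from Proposition~\ref{thm:keyasym}. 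This two-sided use of the HRV machinery of Section~\ref{sect:implicit.renewal}---once on $\bb X$ for the upper bound and once on $\bb Y$ for the error term---is the key idea your sketch is missing; ``perturbing $\bb\xi^*$'' alone will not give you the factor $M^{-\xi_i^*}$ needed to absorb the error term.
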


Sufficient conditions for \eqref{eq:R} will be given at the end of the Section.
The proof of Theorem \ref{thm:positivity:measure} will make use of the decomposition
\begin{equation}\label{eq:decomp:X}
	 \bb X = \bb X_{\le n} + \bb A_1 \cdots \bb A_n \bb X_{>n}
\end{equation}
 and rely on the following Proposition. We use here the shorthand notation 
 $$\Pi ^{(1)}_n:=e^{S_{n,1}} = \norma{ (\bb A_1 \cdots \bb A_n)^{(1)}}, \quad \Pi ^{(2)}_n:=e^{S_{n,2}}= \norma{ (\bb A_1 \cdots \bb A_n)^{(2)}}$$

\begin{proposition}\label{thm:keyasym}
	Under the assumptions of Theorem \ref{thm:main:blocks} there is $c>0$ such that for every $\eps >0$
	\begin{equation}\label{eq:2piestim}
		\liminf _{t\to \8}  t^{\xi^*_1+\xi^*_2}(\log t)^{1\slash 2}	\P \left ( \text{There is $n \in \N$ such that } \Pi ^{(1)}_n>t, \Pi ^{(2)}_n> \eps t \right )\geq c \eps ^{-\xi^* _2}.
	\end{equation}
	In particular, there is $t_0$ such that 	
	\begin{equation}
		\P \left (\exists _n \ \Pi _n^{(1)}>t, \Pi _n ^{(2)}>t \right )\geq c  t^{-\xi^*_1-\xi^*_2}(\log t)^{-1\slash 2}\quad \mbox{for}\ t\geq t_0.
	\end{equation}
\end{proposition}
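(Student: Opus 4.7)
The plan is to reduce to a joint-distribution statement for the first-passage-time pair via a change of measure. By Lemma~\ref{lem:properties.phi2} the drift $\bb m_{\bb\xi^*}=\nabla\phi(\bb\xi^*)$ under $\P_{\bb\xi^*}$ is parallel to $(1,1)$; write $\bb m_{\bb\xi^*}=m(1,1)$ with $m>0$. I introduce the one-coordinate ladder time $T_1:=\inf\{n:S_{n,1}>\log t\}$, which is a.s.\ finite under $\P_{\bb\xi^*}$ and a.s.\ infinite under $\P$ for $t$ large, and set $V_1:=S_{T_1,1}-\log t\ge 0$, $V_2:=S_{T_1,2}-\log t$. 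Since the event $\{S_{T_1,2}>\log t+\log\eps\}$ is contained in the event in~\eqref{eq:2piestim}, the Esscher identity (Wald transform at the stopping time $T_1$) yields
\begin{align*}
\P\bigl(\exists n:\Pi_n^{(1)}>t,\ \Pi_n^{(2)}>\eps t\bigr)\ &\ge\ \E_{\bb\xi^*}\bigl[e^{-\skalar{\bb\xi^*,\bb S_{T_1}}}\Ind{\{V_2>\log\eps\}}\bigr] \\
&=\ t^{-(\xi^*_1+\xi^*_2)}\,\E_{\bb\xi^*}\bigl[e^{-\xi^*_1 V_1-\xi^*_2 V_2}\Ind{\{V_2>\log\eps\}}\bigr].
\end{align*}

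Next I would lower-bound this remaining expectation by restricting the integration to the compact window $\{V_1\in[0,K],\ V_2\in[\log\eps+\delta,\log\eps+1]\}$ for fixed $K,\delta>0$. On this window the integrand is bounded below by $e^{-\xi^*_1 K-\xi^*_2}\,\eps^{-\xi^*_2}$, so the task reduces to showing
\[
\P_{\bb\xi^*}\bigl(V_1\in[0,K],\ V_2\in[\log\eps+\delta,\log\eps+1]\bigr)\ \ge\ \frac{c'}{\sqrt{\log t}}
\]
with $c'>0$ independent of $\eps$, for all sufficiently large $t$. The second assertion of the proposition (for $\eps=1$) then follows immediately from the first.

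The required bound comes from a joint local limit theorem for $(V_1,V_2)$ under $\P_{\bb\xi^*}$. Since $\bb\xi^*\in\interior{I}$, all exponential moments of $\bb U$ are finite (Remark~\ref{rem:xi.interior}) and the covariance of $\bb U$ under $\P_{\bb\xi^*}$ is positive definite by \eqref{assump:nonarithmetic}. Decomposing $V_2=(S_{T_1,2}-mT_1)+m(T_1-\log t/m)$ and combining Anscombe's CLT with the joint CLT for $(T_1,S_{T_1,1}-m T_1)$ shows that $V_2/\sqrt{\log t}$ converges weakly to a centered Gaussian of positive variance $\sigma^2_2$, asymptotically independent of $V_1$. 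Under the Cram\'er condition~\eqref{assump:cramer} this weak convergence upgrades to a density-level statement,
\[
\P_{\bb\xi^*}\bigl(V_1\in[0,K],\ V_2\in[a,b]\bigr)\ \sim\ H_1([0,K])\cdot\frac{b-a}{\sigma_2\sqrt{2\pi\log t}}\qquad(t\to\infty),
\]
uniformly for bounded $[a,b]$, where $H_1$ is the (weak subsequential) limit law of $V_1$ with $H_1([0,K])>0$ for $K$ large. Taking $a=\log\eps+\delta$, the Gaussian density at the point $a/\sqrt{\log t}\to 0$ tends to $1/(\sigma_2\sqrt{2\pi})$, giving the required lower bound.

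The main obstacle is the density-level part of this joint limit theorem at the random time $T_1$. A direct route is to apply the two-dimensional local CLT with Edgeworth remainder (of the kind underlying Theorem~\ref{thm:renewalextended}) to $\bb S_n$ for $n$ in a window of width $O(\sqrt{\log t})$ around $\log t/m$, and sum the resulting densities while controlling the overshoot $V_1$ via a ladder-epoch decomposition of the first-coordinate walk. The Cram\'er hypothesis~\eqref{assump:cramer} is essential here to secure the density-level rate and is already in force.
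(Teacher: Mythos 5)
Your change-of-measure step is correct and the Esscher identity at the first-passage time $T_1=\inf\{n:S_{n,1}>\log t\}$ is the right algebraic reduction: the target probability is lower-bounded by $t^{-(\xi^*_1+\xi^*_2)}\E_{\bb\xi^*}[e^{-\xi^*_1 V_1-\xi^*_2 V_2}\Ind{\{V_2>\log\eps\}}]$, and restricting to the window $V_1\in[0,K]$, $V_2\in[\log\eps+\delta,\log\eps+1]$ does isolate the factor $\eps^{-\xi^*_2}$ cleanly. This is a genuinely different route from the paper. The paper never passes to a stopping time: it works with \emph{fixed} times $n=n_0+\ell$, $\ell\le\sqrt{n_0}$, applies the two-dimensional Edgeworth expansion (Lemma \ref{lem:application.Edgeworth}) to the \emph{unconstrained} walk $\bb S_n$ to get two-sided estimates on $\P(I_n)$, and then passes from $\sum_n\P(I_n)$ to $\P(\bigcup_n I_n)$ by an inclusion-exclusion bound over a grid of spacing $M$, controlling $\sum_{k\neq l}\P(I_k\cap I_l)$ via the Markov inequality and a geometric series in $\phi(\bb\zeta)^M<1$. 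Your stopping-time decomposition gives disjointness of $\{T_1=n\}$ for free, which is conceptually cleaner and eliminates the inclusion-exclusion argument entirely. That is the payoff of your route.

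The cost, however, is exactly the gap you flag yourself: the \emph{density-level} joint limit theorem for the overshoot pair $(V_1,V_2)$ at the random time $T_1$. You need $\P_{\bb\xi^*}\bigl(V_1\in[0,K],\,V_2\in[a,a+1]\bigr)\gtrsim (\log t)^{-1/2}$ uniformly for bounded $a$, with a nondegenerate limit law $H_1$ for $V_1$ and asymptotic independence of $V_1$ and $V_2$. Anscombe's CLT, which you invoke, gives only weak convergence of $V_2/\sqrt{\log t}$; it does not by itself upgrade to a local limit theorem at the random time $T_1$, and the Cram\'er condition alone is not a sufficient mechanism for that upgrade when the time is a first-passage time. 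Your proposed repair — "sum the resulting densities ... while controlling the overshoot via a ladder-epoch decomposition" — runs into the concrete difficulty that $\{T_1=n\}$ carries the constraint $\{S_{k,1}\le\log t\text{ for all }k<n\}$, which the unconstrained Edgeworth estimate for $\bb S_n$ does not see; lower-bounding a \emph{constrained} local density is a harder task than what the Edgeworth expansion gives. Such results do exist (conditioned local CLTs, joint local theorems for overshoot and fluctuation), but they are substantially heavier machinery than the paper needs; as written, your argument relies on a key lemma that you have not proved and that is not a routine consequence of the ingredients already in the paper. Until that lemma is supplied with a proof (or a precise reference), the argument is incomplete.
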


The proof of Proposition \ref{thm:keyasym} will be given in Section \ref{sec:key} below. We now proceed with the proof of Theorem \ref{thm:positivity:measure}. 

\begin{proof}[Proof of Theorem \ref{thm:positivity:measure}]
	We start by relating the event in \eqref{eq:positivity} to the event in \eqref{eq:2piestim}. Introducing  $\wt W_n: = \big \{ \Pi ^{(1)}_n>t,\Pi ^{(2)}_n> \eps t \big \}$, consider the sequence of disjoint sets  $W_n: =\wt W_n\setminus (\wt W_1\cup ...\cup \wt W_{n-1})$ and define for parameters $R>0$, $M>0$ (to be chosen later on) the sequence of disjoint sets
	\begin{equation*}
		U_n:= W_n\cap \big\{ \norma{\bb X_{\leq n}^{(1)}}\leq Mt, \norma{\bb X_{\leq n}^{(2)}}\leq M\eps t\big\} \cap \big\{ \norma{(\bb X_{>n} ^{(1)}}>R, \norma{\bb X_{>n} ^{(2)}}>R \big\}.
	\end{equation*}
	Using the decomposition \eqref{eq:decomp:X} and recalling \eqref{eq:norm:subadditiv}, we have  for $i \in \{1,2\}$,
		\begin{align*}
		\norma{\bfX ^{(i)}} ~\geq &~ c_{\bb \a}^{-1} \norma{(\bfA _1...\bfA _n\bfX_{>n})^{(i)}} - \norma{\bb X_{\le n} ^{(i)}}\\
		=&~ c_{\bb \a}^{-1} \Pi ^{(i)}_n \norma{\bb X_{>n}^{(i)}}-\norma{\bb X_{\le n} ^{(i)}} 
	\end{align*}
	Hence, on the event $U_n$, for every $\eps>0$,
	\begin{equation*}
		\norma{ \bfX ^{(1)}} \geq c_{\a} ^{-1}Rt-Mt>t, \quad \norma{\bfX ^{(2)}} \geq c_{\a} ^{-1}\eps Rt-\eps M t>\eps t,
	\end{equation*}		
	provided $c_{\a}^{-1}R>M+1$. 
	Below we will first choose $M$ and then $R$ accordingly. Both choices will be independent of the value of $\eps$. 	
	Using that $(\Pi_k^{(1)}, \Pi_k^{(2)})_{ k \le n}$ and $\bb X_{\le n}$ are independent of $\bb X_{>n}$, we have
	\begin{align*}
		&~ \P \left (\norma{\bfX ^{(1)}}>t, \norma{\bfX ^{(2)}}>\eps t \right )~\geq~  \sum _n \P ( U_n) \\
		=&~ \sum _n \P \left (   W_n\cap \big\{ \norma{\bb X_{\leq n}^{(1)}}\leq Mt, \norma{\bb X_{\leq n}^{(2)}}\leq M\eps t\big\}\right ) \P \left ( \norma{(\bb X_{>n} ^{(1)}}>R, \norma{\bb X_{>n} ^{(2)}}>R\right )\\
		=&~  \P \bigg ( \bigcup _n \Big(W_n\cap \big\{ \norma{\bb X_{\leq n}^{(1)}}\leq Mt, \norma{\bb X_{\leq n}^{(2)}}\leq M\eps t\big\}\Big) \bigg ) \P \left ( \norma{\bfX  ^{(1)}}>R, \norma{\bfX  ^{(2)}}>R \right )
	\end{align*}
	Hence in order to prove \eqref{eq:positivity}, it suffices by assumption \eqref{eq:R} to show that there is $c>0$ (independent of $\eps$) such that
	\begin{equation}\label{eq:minorization}
		\P \bigg ( \bigcup _n \Big(W_n\cap \big\{ \norma{\bb X_{\leq n}^{(1)}}\leq Mt, \norma{\bb X_{\leq n}^{(2)}}\leq M\eps t\big\}\Big) \bigg )\geq \frac{c}{2}\eps ^{-\xi^* _2}t ^{-\xi^* _1 -\xi^* _2}(\log t)^{-1\slash 2} 
	\end{equation}
	holds for all for $t\geq t_0=t_0(\eps )$.
	
	Abbreviating $Z_n:=\big\{ \norma{\bb X_{\leq n}^{(1)}}\leq Mt, \norma{\bb X_{\leq n}^{(2)}}\leq M\eps t\big\}$, we consider
	\begin{align}
	 \P \big( \bigcup_n (W_n \cap Z_n) \big) &= \sum_{n} \P \big( W_n \cap Z_n \big) = \sum_{n} \big( \P(W_n) - \P(W_n \cap Z_n^c) ) \notag \\
	 &= \P \big( \bigcup_n W_n \big)- \P \big( \bigcup_n (W_n \cap Z_n^c) \big) \label{eq:P:Wn}
	\end{align}
By \eqref{eq:2piestim} of Proposition \ref{thm:keyasym} there is $c>0$ such that for all $\eps>0$, 
		\begin{equation*}
		\liminf _{t\to \8} \ t^{\xi^* _1+\xi^* _2}(\log t)^{1\slash 2}\P \Big ( \bigcup _n  W_n \Big )\geq c\eps ^{-\xi^* _2}.
	\end{equation*}	 
	Hence, in view of \eqref{eq:P:Wn}, we can deduce \eqref{eq:minorization} if we can show that
		\begin{equation}\label{eq:complement}
		\limsup _{t\to \8}\ t^{\xi^* _1+\xi^* _2}(\log t)^{1\slash 2}\P \Big ( \bigcup _n \big ( W_n\cap \{ \norma{\bb X_{\le n}^{(1)}}> Mt\ \mbox{or}\  \norma{\bb X_{\le n}^{(2)}}> M\eps t\}\big )\Big)\leq 
		\frac{c}{4}\eps ^{-\xi^* _2}
	\end{equation}	
	holds for $c$ given by \eqref{eq:2piestim}. In the proof, we have to distinguish two cases.

	\textbf{Proof of \eqref{eq:complement} in the case $\max_{1 \le i \le d} \alpha_i \le 1$}. In this case, $\norma{ \cdot}$ is subadditive. That is, $c_{\bb \alpha}=1$ in \eqref{eq:norm:subadditiv}. Introducing for $j \in \{1,2\}$
	\begin{equation}\label{eq:def:Y}
		Y_j:=\sum _{k=1}^{\8}\Pi ^{(j)}_{k-1}\big (\norma{\bfB _k^{(j)}}+1\big ),
	\end{equation}
	we obviously have $\Pi ^{(j)} _n\leq Y_j$ for all $n \in \N$; and also
	\begin{equation*}
		\norma{\bb X_{\le n}^{(j)}} = \norma{\sum _{k=1}^n \bfA ^{(j)}_1...\bfA ^{(j)}_{k-1}\bfB ^{(j)} _k} \leq \sum _{k=1}^n \norma{ \bfA ^{(j)}_1...\bfA ^{(j)}_{k-1}\bfB ^{(j)} _k} \leq \sum _{k=1}^n \Pi ^{(j)}_{k-1} \norma{\bfB ^{(j)} _k}\leq Y_j. 
	\end{equation*}

	Observe that the sets
	$$ W_n\cap \{ \norma{\bb X_{\le n}^{(1)}}> Mt\ \mbox{or}\  \norma{\bb X_{\le n}^{(2)}}> M\eps t\} ,\ n\in \N $$ are disjoint and  each of them is contained in $  \{Y_{2}>\eps t, Y_1> Mt \} \cup    \{ Y_{1}> t, Y_2> M\eps t \}$. Indeed, 
	\begin{align*}
		&~ \big\{\Pi ^{(1)}_n> t, \Pi ^{(2)}_n>\eps t, \norma{\bb X_{\le n}^{(1)}}> Mt\ \mbox{or}\  \norma{\bb X_{\le n}^{(2)}}> M\eps t \big\} \\
		\subset&~   \big\{\Pi ^{(2)}_n>\eps t, \norma{\bb X_{\le n}^{(1)}}> Mt\big\}  
		\cup  \big\{\Pi ^{(1)}_n> t,  \norma{\bb X_{\le n}^{(2)}}> M\eps t\big\}\\
		\subset&~ \{Y_{2}>\eps t, Y_1> Mt \} \cup    \{ Y_{1}> t, Y_2> M\eps t \}.
	\end{align*}
	Therefore, it is enough to prove that 
	\begin{equation}\label{eq:domination1}
		\limsup _{t\to\8}\ (\log t)^{1\slash 2}t^{\xi^* _1+\xi^* _2}\P \left (  Y_{2}>\eps t, Y_1> Mt \right )\leq CM^{-\xi^* _1}\eps ^{-\xi^* _2}
	\end{equation}
	and
	\begin{equation}\label{eq:domination2}
		\limsup _{t\to\8}\ (\log t)^{1\slash 2}t^{\xi^* _1+\xi^* _2} \P \left (  Y_{1}> t, Y_2> M\eps t \right )\leq CM^{-\xi^* _2}\eps ^{-\xi^* _2},
	\end{equation}
	since we may then choose $M$ sufficiently large such that $C \cdot \max\{M^{-\xi^*_1}, M^{-\xi^*_2} \} \le \frac{c}{4}$.
	
	Let $f\in \mathbf{H}^\eps (\R ^2 \setminus [\mathsf{axes}])$, $\supp f\subset \{ x: |x_j| \geq 1\slash 2\}$, $f(x)=1$  if $|x_1|\geq 1, |x_2|\geq 1$. Consider $f_{M/\eps,1 }(x)=f(\eps M^{-1}x_1, x_2)$ and $f_{1,M\eps }(x)=f(x_1, M^{-1}\eps ^{-1}x_2)$. Then
	\begin{align*}
		\P \left (  Y_{2}>\eps  t, Y_1> Mt \right )&\leq  \E [f(\eps M^{-1}(\eps t)^{-1}Y_1, (\eps t)^{-1}Y_2)]= \E [f_{M/\eps,1 }((\eps t)^{-1} \bb Y)]\\[.2cm]
		\P \left (  Y_{1}>  t, Y_2> \eps Mt \right )&\leq  \E [f(t^{-1}Y_1, t^{-1}M^{-1}\eps ^{-1}Y_2)]= \E [f_{1,M\eps }(t^{-1}\bb Y)].
	\end{align*}
	As a consequence of \eqref{eq:def:Y}, $\bb Y=(Y_1,Y_2)$ satisfies the equations
	\begin{equation*}
		Y_j\eqdist e^{U_j}Y_j+\left (\norma{\bfB ^{(j)}}+1\right ), \quad j=1,2.
	\end{equation*}
	We may thus apply Proposition \ref{prop:gdri} with $\bb Y$ instead of $\bb X$ and obtain from \eqref{eq:boundLimitDRIwithM}
	\begin{align*}
		&~\limsup _{t\to \8}\ (\log t)^{1\slash 2}t^{\xi^* _1+\xi^* _2}\E [f_{M/\eps,1 }((\eps t)^{-1} \bb Y)] \\  =&~ \limsup _{t\to \8}\ \bigg(\frac{\log t}{\log (\eps t)}\bigg)^{1\slash 2} \eps^{-\xi^* _1-\xi^* _2} (\log \eps t)^{1\slash 2}(\eps t)^{\xi^* _1+\xi^* _2}\E [f_{M/\eps,1 }((\eps t)^{-1} \bb Y)] \\
		 \leq&~ \eps^{-\xi^*_1 - \xi^*_2} \cdot C (M/\eps)^{-\xi^*_1} =  C M^{-\xi^* _1}\eps ^{-\xi^* _2}		
		\end{align*}
		as well as
	\begin{align*}
		\lim _{t\to \8}\ (\log t)^{1\slash 2}t^{\xi^* _1+\xi^* _2} \E [f_{1,M\eps }(t^{-1}\bb Y)]&\leq C M^{-\xi^* _2}\eps ^{-\xi^* _2}.
	\end{align*}	
	This shows \eqref{eq:domination1} and  \eqref{eq:domination2} in the case when $\norma{\cdot}$ is subadditive.
	
	\textbf{Proof of \eqref{eq:complement} in the case $\max_{1 \le i \le d} \alpha_i > 1$}. In this case,  we choose $\gamma \in (0,1)$ such that $\a_i \gamma \le 1$ for every $i$ and consider 
	\begin{equation*}
		\norma{\bb x}^{\gamma}= \sup _{1 \le i \le d}|x_i|^{\a_i \gamma}
	\end{equation*}
	which is subadditive by our choice of $\gamma$. It holds for $j\in \{1,2\}$
	\begin{equation*}
		|Y_j|^\gamma ~\le \sum _{k=1}^{\8}\left (\Pi ^{(j)}_{k-1}\right )^{\gamma}\left (\norma{\bfB _k^{(j)}}^{\gamma}+1\right ) := Z_j, 
	\end{equation*}
	and hence $\left (\Pi ^{(j)}_n\right )^{\gamma}, \norma{\bb X_{\le n}^{(j)}}^{\gamma} \leq Z_j$.
We have
	\begin{align*}
		 &~ \{ \Pi ^{(1)}_n> t, \Pi ^{(2)}_n>\eps t, \norma{X_{\le n}^{(1)}}> Mt\ \mbox{or}\  \norma{\bb X_{\le n}^{(2)}}> M\eps t \} \\
		 = &~  \Big\{(\Pi ^{(1)}_n)^\gamma> t^\gamma, (\Pi ^{(2)}_n)^\gamma>(\eps t)^\gamma, \norma{X_{\le n}^{(1)}}^\gamma> (Mt)^\gamma\ \mbox{or}\  \norma{\bb X_{\le n}^{(2)}}^\gamma> (M \eps t) ^\gamma \Big\} \\
		\subset &~   \Big\{  Z_{2}> (\eps t)^{\gamma}, Z_1> (Mt)^{\gamma} \Big\} \cup \Big\{  Z_{1}> t^{\gamma}, Z_2> (M\eps t)^{\gamma} \Big\}.
	\end{align*}
Reasoning as before, we have	\begin{align*}
	\P \left (   Z_{2}> (\eps t)^{\gamma}, Z_1> (Mt)^{\gamma} \right )&\leq  \E [f(\eps M^{-\gamma}(\eps t)^{-\gamma}Z_1, (\eps t)^{-\gamma}Z_2)]= \E [f_{(M/\eps)^\gamma,1 }((\eps t)^{-\gamma} \bb Z)]\\[.2cm]
	\P \left (  Z_{1}> t^{\gamma}, Z_2> (M\eps t)^{\gamma}\right )&\leq  \E[ f(t^{-\gamma}Z_1, t^{-\gamma}M^{-\gamma}\eps ^{-\gamma}Z_2)]= \E [f_{1,(M\eps)^\gamma }(t^{-\gamma}\bb Z)].
\end{align*}
Observing that $\bb Z=(Z_1, Z_2)$ satisfies the equations
	\begin{equation*}
		Z_j \eqdist e^{\gamma U_j}Z_j+\left (\norma{\bfB ^{(j)}}^{\gamma}+1\right ), \quad j=1,2,
	\end{equation*}
	we may apply Proposition \ref{prop:gdri} to $\bb Z$ with $\wt {\bb \xi} = \gamma^{-1} \bb \xi^*$ and obtain
	\begin{align*}
		&~\limsup _{t\to \8}\ (\log t)^{1\slash 2}t^{\xi^* _1+\xi^* _2}\E \big[ f_{(M/\eps)^\gamma,1 }((\eps t)^{-\gamma} \bb Z)\big] \\ 
		 =&~ \limsup _{t\to \8}\ \bigg(\frac{\log t}{\gamma \log (\eps t)}\bigg)^{1\slash 2} \eps^{-\xi^* _1-\xi^* _2} (\gamma \log \eps t)^{1\slash 2}\big((\eps t)^\gamma\big)^{\wt \xi _1+\wt \xi _2}\E \Big[ f_{(M/\eps)^\gamma,1 }\Big(\big((\eps t)^\gamma\big)^{-1} \bb Y\Big) \Big] \\
		\leq&~ \eps^{-\xi^*_1 - \xi^*_2} \cdot C \big((M/\eps)^\gamma\big)^{-\wt \xi_1} =  C M^{-\xi^* _1}\eps ^{-\xi^* _2},		
	\end{align*}
	the second estimate being similar. This shows \eqref{eq:complement} in the case when $\norma{\cdot}$ is not subadditive; and concludes the proof.
\end{proof}

%

\begin{lemma} \label{lem:X:unbounded}
	Suppose there are $(\bb a, \bb b)$  in the semi-group generated by  $\supp (\bfA , \bfB )$ such that both $|a_1|>1$ and $|a_2|>1$, and let $\bb x_0:=(\Id - \bb a)^{-1} \bb b$. Suppose that there is $\bb x_1  \in \supp (\bb X)$ such that $(\bb x_1 - \bb x_0)^{(i)} \neq \bb 0$ for both $i \in \{1,2\}$.
		 Then for every $R>0$,
		$$	\P (|\bfX ^{(1)}|_{\a }>R, |\bfX ^{(2)}| _{\a }>R )>0.$$
\end{lemma}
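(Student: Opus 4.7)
The plan is to establish an invariance property of $\supp(\bb X)$ and then iterate. Specifically, I will show first that whenever $\bb y \in \supp(\bb X)$ and $(\bb a, \bb b)$ lies in the semigroup generated by $\supp(\bb A, \bb B)$, the affine image $T(\bb y) := \bb a \bb y + \bb b$ also lies in $\supp(\bb X)$. Granting this, iteration gives $T^k(\bb x_1) \in \supp(\bb X)$ for all $k \ge 1$, where by the choice of $\bb x_0$ as the fixed point of $T$ one has the explicit formula
\begin{equation*}
T^k(\bb x_1) = \bb a^k(\bb x_1 - \bb x_0) + \bb x_0.
\end{equation*}
The semigroup generated by $\supp(\bb A, \bb B)$ respects the block structure: since the equivalence relation $|A_j|^{\alpha_j} = |A_1|^{\alpha_1}$ a.s.\ (for $j \in J_1$) is multiplicative, every product $\bb a = \bb a^{(n)} \cdots \bb a^{(1)}$ of elements of $\supp \bb A$ satisfies $|a_j|^{\alpha_j} = |a_1|^{\alpha_1}$ for $j \in J_1$, and similarly for $J_2$. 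Combined with $|a_1|,|a_2|>1$ this yields $|a_j|>1$ for every diagonal entry of $\bb a$.

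Because $(\bb x_1 - \bb x_0)^{(i)} \neq \bb 0$, each block contains a coordinate $j_i$ with $(\bb x_1 - \bb x_0)_{j_i} \neq 0$, and then
\begin{equation*}
\bigl|(T^k(\bb x_1))_{j_i}\bigr| \,\ge\, |a_{j_i}|^k \,|(\bb x_1 - \bb x_0)_{j_i}|\, -\, |(\bb x_0)_{j_i}| \,\longrightarrow\, \infty.
\end{equation*}
In particular $\norma{(T^k(\bb x_1))^{(i)}} \to \infty$ for both $i=1,2$. Given $R>0$, choose $k$ so large that $\norma{(T^k(\bb x_1))^{(i)}} > 2R$ for $i=1,2$. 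The set $\{\bb y : \norma{\bb y^{(1)}}>R,\,\norma{\bb y^{(2)}}>R\}$ is open and contains an open neighborhood of $T^k(\bb x_1)$; as $T^k(\bb x_1) \in \supp(\bb X)$, this neighborhood has positive $\bb X$-probability, giving the lemma.

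The main obstacle is the invariance step, which I would justify as follows. Fix a representation $\bb a = \bb a^{(n)} \cdots \bb a^{(1)}$ and $\bb b = \sum_{k=1}^{n} \bb a^{(n)} \cdots \bb a^{(k+1)} \bb b^{(k)}$ with each $(\bb a^{(k)}, \bb b^{(k)}) \in \supp(\bb A, \bb B)$. Consider the recursion $\bb X_m := \bb A_m \bb X_{m-1} + \bb B_m$ started from $\bb X_0 \eqdist \bb X$ independent of $(\bb A_m, \bb B_m)_{m \ge 1}$, so that $\bb X_n \eqdist \bb X$. For any open neighborhood $V$ of $\bb y$ and product neighborhoods $W_k$ of $(\bb a^{(k)}, \bb b^{(k)})$, the event $\{\bb X_0 \in V\} \cap \bigcap_{k=1}^{n}\{(\bb A_k, \bb B_k) \in W_k\}$ has positive probability by independence and the definition of the support, and on this event $\bb X_n$ lies in an arbitrarily small neighborhood of $\bb a \bb y + \bb b$ by continuity of the iterated affine composition in $((\bb A_k, \bb B_k)_k, \bb X_0)$. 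Hence $\bb a \bb y + \bb b \in \supp(\bb X_n) = \supp(\bb X)$, completing the invariance argument.
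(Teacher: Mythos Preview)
Your proof is correct and follows essentially the same route as the paper: iterate the affine map $T(\bb x)=\bb a\bb x+\bb b$ from $\bb x_1$, use the fixed-point formula $T^k(\bb x_1)=\bb a^k(\bb x_1-\bb x_0)+\bb x_0$, and conclude from $|a_i|>1$ that both block norms blow up. The paper outsources the invariance $T(\supp\bb X)\subset\supp\bb X$ to \cite[Proposition 4.3.1]{Buraczewski2016}, whereas you supply the direct argument via continuity and independence; you are also more explicit than the paper about why the semigroup preserves the block identities $|a_j|^{\alpha_j}=|a_1|^{\alpha_1}$ (needed so that $|a_j|>1$ for every coordinate) and about handling the additive $\bb x_0$ term via the triangle inequality.
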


\begin{proof} Let $(\bb a, \bb b)$ and $\bb x_0$ be as in the statement of the lemma. Note that $\bb x_0$ is the unique fixed point of the mapping $h: \bb x \mapsto \bb a \bb x + \bb b$.
	Observe that for every $\bb x$
	\begin{equation*}
		\bb a \bb x+ \bb b=\bb a( \bb x-\bb x_0)+\bb a\bb x_0 + \bb b = \bb a(\bb x - \bb x_0) + \bb x_0
	\end{equation*}
	and hence, upon iterating,
	\begin{equation*}
		h^n(\bb x)= \bb a^n (\bb x -\bb x_0) + \bb x_0
	\end{equation*}
	Applying this to $\bb x_1$, we have for both $i \in \{1,2\}$
	\begin{equation*}
	\norma{(h^n(\bb x))^{(i)}}=	\norma{\left (\bb a^n(\bb x_1-\bb x_0)\right )^{(i)}}=|A_i|^n \norma{(\bb x_1-\bb x_0)^{(i)}}\to \8,
	\end{equation*}
	as $n \to \infty$.
	Moreover, for every $n$, $h^n(\bb x_1)$ belongs to $\supp(\bb X)$ (see \cite[Proposition 4.3.1]{Buraczewski2016} for a proof) and so the conclusion follows. 
	
\end{proof}

\begin{remark}\label{rem:suff.cond.unbounded} Sufficient conditions for the assertions of Lemma \ref{lem:X:unbounded} are as follows.
	\begin{itemize}
		\item  The existence of $(\bb a, \bb b)$  in the semi-group generated by  $\supp (\bfA , \bfB )$ such that both $|a_1|>1$ and $|a_2|>1$ is guaranteed if we assume that  $\P(\exists_n\, |\Pi_n^{(1)}|>1, |\Pi_n^{(2)}|>1)>0$; in particular, it follows if we have that $\P(|A_1|>1, |A_2|>1)>0$. Since the latter assumption is easy to check on the input random variables, we refrain from studying further sufficient conditions.
		\item Concerning the existence of $\bb x_1$ that differs from $\bb x_0$ in both blocks, it is shown in \cite[Proposition 4.3.2]{Buraczewski2016}, extending \cite{Alsmeyer2009}, that under our assumptions, the law of $\bb X$ does not have atoms and is of pure type, i.e., either singular or absolutely continuous with respect to Lebesgue measure. In the latter case, $\supp(\bb X)$ contains an open set and the existence of $\bb x_1$ follows. As a consequence of \cite[Proposition 4.3.1]{Buraczewski2016}, simple sufficient conditions for the absolute continuity of the law of $\bb X$ are for example that 
		\begin{enumerate}
			\item $\bb B$ is constant and the law of $(A_1, A_2)$ is nonsingular w.r.t. Lebesgue measure on $\R^2$, or
			\item $\bb B$ is independent of $\bb A$ and is nonsingular w.r.t. Lebesgue measure.
		\end{enumerate}
	\end{itemize}	
\end{remark}

\begin{proposition}\label{thm:nu:unbounded}
	The measure $\nu $ is unbounded.
\end{proposition}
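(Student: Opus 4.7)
The plan is to argue by contradiction: assume $\nu(\wt S_1) < \8$, and show that $\Lambda_2$ of certain wedge-shaped sets would then remain uniformly bounded as a parameter shrinks to zero, contradicting the quantitative blowup provided by Theorem \ref{thm:positivity:measure}.

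For $\epsilon > 0$, I would introduce the wedges
\begin{equation*}
	A_\epsilon := \{\bb x \in \R^d \setminus [\mathsf{blocks}] : \norma{\bb x^{(1)}} > 1, \, \norma{\bb x^{(2)}} > \epsilon\}
\end{equation*}
and note that the homogeneity \eqref{eq:homogenity} identifies the event $\{\norma{\bb X^{(1)}} > t, \norma{\bb X^{(2)}} > \epsilon t\}$ with $\{t^{-1/\bb \alpha} \bb X \in A_\epsilon\}$. Choosing an $\epsilon$-$\bb\alpha$-Hölder sandwich $g_\epsilon \in \bb H^\eps(\R^d \setminus [\mathsf{blocks}])$ satisfying $\Ind{A_{2\epsilon}} \leq g_\epsilon \leq \Ind{A_\epsilon}$ (available as a product of Lipschitz cutoffs in $\norma{\bb x^{(1)}}$ and $\norma{\bb x^{(2)}}$), the main convergence \eqref{eq:precise} combined with \eqref{eq:positivity} (applied with $\epsilon$ replaced by $2\epsilon$) yields
\begin{equation*}
	\Lambda_2(A_\epsilon) \;\geq\; \int g_\epsilon \, d\Lambda_2 \;=\; \lim_{t\to\8} (\log t)^{1/2} t^{\xi_1^*+\xi_2^*} \E\bigl[g_\epsilon(t^{-1/\bb \alpha} \bb X)\bigr] \;\geq\; c (2\epsilon)^{-\xi_2^*} \;=:\; c' \epsilon^{-\xi_2^*}
\end{equation*}
for all sufficiently small $\epsilon > 0$.

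On the other hand, I would evaluate $\Lambda_2(A_\epsilon)$ using the disintegration \eqref{eq:measure4}. Since $\norma{(s^{1/\bb \alpha}\omega)^{(j)}} = s \norma{\omega^{(j)}}$ for both blocks, the condition $s^{1/\bb \alpha}\omega \in A_\epsilon$ amounts to $s > \max\bigl(1/\norma{\omega^{(1)}}, \, \epsilon/\norma{\omega^{(2)}}\bigr)$, and integrating $s^{-(\xi_1^*+\xi_2^*+1)}\,ds$ from this threshold to $\8$ gives
\begin{equation*}
	\Lambda_2(A_\epsilon) \;=\; \frac{1}{\xi_1^*+\xi_2^*} \int_{\wt S_1} \min\bigl(\norma{\omega^{(1)}}, \, \norma{\omega^{(2)}}/\epsilon\bigr)^{\xi_1^*+\xi_2^*} d\nu(\omega).
\end{equation*}
Every $\omega \in \wt S_1$ satisfies $\max\{\norma{\omega^{(1)}}, \norma{\omega^{(2)}}\} = 1$ with both coordinates strictly positive, so the integrand is bounded above by $1$. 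Hence, if $\nu(\wt S_1)$ were finite, $\Lambda_2(A_\epsilon) \leq \nu(\wt S_1)/(\xi_1^* + \xi_2^*)$ would be bounded uniformly in $\epsilon$, contradicting $\Lambda_2(A_\epsilon) \geq c' \epsilon^{-\xi_2^*} \to \8$ as $\epsilon \to 0$.

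The chief technical point is the first step, namely constructing the Hölder sandwich $g_\epsilon$ and interpreting the one-sided inequalities: $g_\epsilon \leq \Ind{A_\epsilon}$ gives $\int g_\epsilon \, d\Lambda_2 \leq \Lambda_2(A_\epsilon)$ directly, while $g_\epsilon \geq \Ind{A_{2\epsilon}}$ applied pointwise to $t^{-1/\bb \alpha}\bb X$ feeds \eqref{eq:positivity} into the limit via the main theorem. Everything else is routine calculation with the disintegration formula.
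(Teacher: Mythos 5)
Your argument is essentially the same as the paper's: bound $\int f\,d\Lambda_2$ from below via Theorem \ref{thm:positivity:measure} and from above via the disintegration \eqref{eq:measure4}, and let the parameter tend to zero. Your computation $\Lambda_2(A_\epsilon)=\tfrac{1}{\xi_1^*+\xi_2^*}\int \min(\norma{\omega^{(1)}},\norma{\omega^{(2)}}/\epsilon)^{\xi_1^*+\xi_2^*}\,d\nu$ and the observation $\norma{\omega^{(1)}}\le\norma{\omega}=1$ on $\wt S_1$ are correct and make the upper bound slightly more explicit than the paper's (the paper only uses that $s\ge 1/2$ on $\supp f_\delta$); but this extra precision buys nothing since the contradiction already works with any uniform bound.

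There is a technical slip in your sandwich, though. You require a continuous (indeed $\epsilon$-$\bb\alpha$-H\"older) $g_\epsilon$ with $\Ind{A_{2\epsilon}}\le g_\epsilon\le \Ind{A_\epsilon}$, but both $A_\epsilon$ and $A_{2\epsilon}$ impose the \emph{same} threshold $\norma{\bb x^{(1)}}>1$ in the first coordinate. So at any point with $\norma{\bb x^{(1)}}=1$ and $\norma{\bb x^{(2)}}>2\epsilon$ the upper bound forces $g_\epsilon=0$, while arbitrarily close points in $A_{2\epsilon}$ force $g_\epsilon=1$; no continuous $g_\epsilon$ exists. You must widen the sandwich in the first coordinate as well, for instance $\Ind{\{\norma{\bb x^{(1)}}>1,\,\norma{\bb x^{(2)}}>2\epsilon\}}\le g_\epsilon\le \Ind{\{\norma{\bb x^{(1)}}>1/2,\,\norma{\bb x^{(2)}}>\epsilon\}}$, which is exactly what the paper's $f_\delta$ does. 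With this fix the upper bound becomes $\Lambda_2(\{\norma{\bb x^{(1)}}>1/2,\ \norma{\bb x^{(2)}}>\epsilon\})\le 2^{\xi_1^*+\xi_2^*}\nu(\wt S_1)/(\xi_1^*+\xi_2^*)$, still uniform in $\epsilon$, and your contradiction goes through unchanged.
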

\begin{proof}
	For every $\delta>0$ consider a nonnegative function $f_\delta \in \bb H^\epsilon(\R^d \setminus [\mathsf{blocks}])$ such that
$$ \Ind{\{ \norma{\bb x^{(1)}} \geq 1/2, \norma{\bb x^{(2)}}\geq \d/2 \}} \ge f_\delta \ge\Ind{\{   \norma{\bb x^{(1)}} \geq 1, \norma{\bb x^{(2)}}\geq \d \}}.$$
	 Then, by \eqref{eq:positivity}, for all sufficiently large $t$,
	\begin{equation*}
		t^{\xi^* _1 +\xi^* _2}(\log t)^{1\slash 2}\E [f_{\d}(t^{-1\slash \bb \a } \bfX)]\geq t^{\xi^* _1 +\xi^* _2}(\log t)^{1\slash 2} \P \left (  \norma{\bb X^{(1)}}>t, \norma{\bb X^{(2})}>\d t \right ) \ge c \delta^{-\xi^*_2}
	\end{equation*}
	for a constant $c$ that is independent of $\delta$. On the other hand, by \eqref{eq:measure4}
	\begin{align*}
		&~c\d ^{-\xi^* _2}\leq \lim _{t\to \8}t^{\xi^* _1 +\xi^* _2}(\log t)^{1\slash 2}\E [f_{\d}(t^{-1\slash {\bb \a} } \bfX)]\\
		=&~\int _{\wt S_1}\int _{0}^{\8 }f_{\d }(s^{1\slash {\bb \a}}\omega)\frac{1}{s^{\xi^* _1+\xi^* _2+1}}d\nu (\omega)\\
		\leq&~ \int _{\wt S_1} \int _{1\slash 2}^{\8 }\frac{1}{s^{\xi^* _1+\xi^* _2+1}}d\nu (\omega)= (\xi^* _1+\xi^* _2)^{-1}2^{\xi^* _1+\xi^* _2}\nu (\wt S_1)
	\end{align*}
	where we used that $f_\delta \neq 0$ requires 
	$$s \ge s \norma{\omega^{(1)}} =  \norma{(s^{1\slash \bb \a}\omega )^{(1)}} \ge 1/2.$$ Letting $\eps \to 0$ we see that $\nu $ is unbounded.
\end{proof}

\begin{proof}[Proof of Theorem \ref{thm:main:blocks} - continued]
	We were left with the proof that $\Lambda_2$ and $\nu$ are nonzero, and that $\nu$ is unbounded under assumption \eqref{eq:suppX:unbounded}. The nontriviality of $\Lambda_2$ and hence of $\nu$ follows from \eqref{eq:positivity}, which gives (with $\eps=1$) a lower bound for $\int f(\bb x)\Lambda_2(d \bb x)$ for a continuous function $f$ such that
	$ f \geq \Ind{\{\norma{x^{(1)}}>1, \norma{x^{(2)}}>1\}}.$
	The unboundedness of $\nu$ is the content of Proposition \ref{thm:nu:unbounded}.
\end{proof}

	\subsection{Key Lemma}\label{sec:key}
	In this Section we provide the proof of Proposition \ref{thm:keyasym}. It will be based on the following Lemma where we employ an Edgeworth expansion to obtain for a fixed $n$ in a suitable range the probability that $\bb S_n$ attains values close to $\log t$. 

Let  $\bb \xi^*$ be as in Theorem \ref{thm:main:blocks}. Then $\E_{\bb \xi^*} [\bb U]$ is parallel to (1,1), hence there is $\rho>0$ such that $\E _{\bb \xi^*} [U_i]=\rho $ for $i=1,2$. Denoting $\s ^2_i=\E _{\bb \xi^*} \left ( U_i-\rho \right )^2 $, consider the standardized variables  $V_i=\left ( U_{i}-\rho \right )\slash \s _i$, $i=1,2$ and 
write $\Sigma_{\bb V}$ for the covariance matrix of $\bb  V=(V_1, V_2)$.
By Assumption \eqref{assump:cramer} it holds in particular that $\mu _{\bb \xi^*}$ is nonarithmetic.
Hence, the covariance matrix $\Sigma_{\bb V}$ is strictly positive definite (otherwise, $(V_1, V_2)$ would be supported on a line).

Let
\begin{equation}\label{eq:covariance}
	h (\bb w)= (2\pi )^{-1}(\det \Sigma_{\bb V})^{-1\slash 2}e^{-\langle \bb w,\Sigma_{\bb V}^{-1} \bb w\rangle }.
\end{equation}
Observe that there are $c_1,c_2,c_3$ such that
\begin{equation}\label{eq:bound:psi}
	c_3 e^{-c_1 \| \bb w\| ^2}\leq h (u)\leq  c_3 e^{-c_2 \| \bb w\| ^2}.
\end{equation}
The preparatory lemma will be formulated in terms of the following nontrivial constants.
\begin{align*}
	c(\rho, \s _1, \s _2):=& (\rho +1) ^2(\s _1^{-2}+\s _2^{-2}),\\
	c:=&\frac{1}{2}c_3 \exp({-c_1 c(\rho, \s _1, \s _2)}) (\s _1\s _2)^{-1}e^{-\xi^* _1-\xi^* _2},\\
	c':=&(c_3+c_4)(\s _1\s _2)^{-1}+1.
\end{align*}

\begin{lemma}\label{lem:application.Edgeworth}
	Under the assumptions of Theorem \ref{thm:main:blocks}, let $n_0 = \lceil \log t \slash \rho \rceil $. 
	For every $\eps $ there is $t_0=t_0(\eps )$ such that for every $\ell \leq \sqrt{n_0}$ and $n=n_0+\ell$ we have
	\begin{equation}\label{eq:below}
		\P \left ( n_0\rho <S _{n,1}< n_0 \rho +1, n_0+\log \eps  <S _{n,2}< n_0 \rho +\log \eps +1\right )\geq c \eps ^{-\xi^* _2} t^{-\xi^*_1-\xi^*_2}n_0^{-1}.
	\end{equation}
	and
	\begin{equation}\label{eq:above}
		\P \left ( n_0\rho <S _{n,1}< n_0 \rho +1, n_0\rho +\log \eps <S _{n,2}< n_0 \rho +\log \eps +1\right )\leq c' \eps ^{-\xi^* _2} t^{-\xi^*_1-\xi^*_2}n_0^{-1}.
	\end{equation}
\end{lemma}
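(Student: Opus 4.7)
The plan is to combine an exponential change of measure (Esscher transform) with a two‑dimensional local central limit theorem for $\bb S_n$ under the tilted law $\P_{\bb \xi^*}$. Write $R_{t,\eps}:=(n_0\rho,n_0\rho+1)\times(n_0\rho+\log\eps,n_0\rho+\log\eps+1)$ for the target rectangle. By the defining relation of $\mu_{\bb \xi^*}$, for any event depending on $(\bb U_1,\dots,\bb U_n)$,
\[
\P\big(\bb S_n\in R_{t,\eps}\big)\;=\;\E_{\bb \xi^*}\!\Big[e^{-\skalar{\bb \xi^*,\bb S_n}}\Ind{\{\bb S_n\in R_{t,\eps}\}}\Big].
\]
On $R_{t,\eps}$ the exponential weight $e^{-\skalar{\bb \xi^*,\bb S_n}}$ lies between $e^{-(\xi_1^*+\xi_2^*)(n_0\rho+1)}\eps^{-\xi_2^*}$ and $e^{-(\xi_1^*+\xi_2^*)n_0\rho}\eps^{-\xi_2^*}$. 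Since $\log t\le n_0\rho<\log t+\rho$, this factor is of the form $\eps^{-\xi_2^*}t^{-(\xi_1^*+\xi_2^*)}$ up to multiplicative constants that depend only on $\bb \xi^*$ and $\rho$, and in particular the $e^{-(\xi_1^*+\xi_2^*)}$ appearing in the constant $c$ is obtained at the upper corner of $R_{t,\eps}$.

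The remaining problem is therefore to estimate $\P_{\bb \xi^*}(\bb S_n\in R_{t,\eps})$ by a two‑dimensional local limit theorem. Under $\P_{\bb \xi^*}$ the increments $\bb U_i$ are i.i.d.\ with mean $(\rho,\rho)$ and finite moments (by the choice of $\bb \xi^*\in \interior{I}$), and the standardized vector $\bb V=\big((U_1-\rho)/\s_1,(U_2-\rho)/\s_2\big)$ has covariance $\Sigma_{\bb V}$ which is strictly positive definite: a degenerate $\Sigma_{\bb V}$ would force $\bb V$ to live on a line, contradicting the nonarithmeticity of $\mu_{\bb \xi^*}$ implied by Cramér's condition \eqref{assump:cramer}. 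Assumption \eqref{assump:cramer} furthermore provides exactly the input needed for a 2D Edgeworth/local CLT expansion (see e.g.\ the Bhattacharya--Rao type theorems referenced in Section~\ref{sect:assumptions.notations}). Standardising coordinates and using that $R_{t,\eps}$ has unit Lebesgue area in the $\bb S_n$‑scale (hence area $1/(\s_1\s_2 n)$ in the $(T_n/\sqrt n)$‑scale), the local CLT yields, uniformly over bounded standardised points,
\[
\P_{\bb \xi^*}\!\big(\bb S_n\in R_{t,\eps}\big)\;=\;\frac{1}{\s_1\s_2\,n}\Big(h(\bb w_n)+o(1)\Big),\qquad n\to\infty,
\]
where $\bb w_n=\big((S_{n,1}-n\rho)/(\s_1\sqrt n),(S_{n,2}-n\rho)/(\s_2\sqrt n)\big)$ evaluated at any point of the rectangle.

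For $n=n_0+\ell$ with $|\ell|\le\sqrt{n_0}$, the displacement $(n_0-n)\rho=-\ell\rho$ is at most $\rho\sqrt{n_0}$ in absolute value in the first coordinate, and $|{-\ell\rho+\log\eps}|\le \rho\sqrt{n_0}+|\log\eps|$ in the second, so for $t\ge t_0(\eps)$ one has $\|\bb w_n\|^2\le (\rho+1)^2(\s_1^{-2}+\s_2^{-2})=c(\rho,\s_1,\s_2)$. Feeding the Gaussian sandwich \eqref{eq:bound:psi} into the local CLT and combining with Step~1 then gives the two one‑sided estimates: for the lower bound \eqref{eq:below} use $h(\bb w_n)\ge c_3\exp(-c_1 c(\rho,\s_1,\s_2))$ and take $t_0$ so large that the $o(1)$ term is absorbed into the factor $\tfrac12$ in $c$; for the upper bound \eqref{eq:above} use $h(\bb w_n)\le c_3$, the lowest value of $e^{-\skalar{\bb \xi^*,\bb S_n}}$ on $R_{t,\eps}$, and absorb the Edgeworth error into the constants $c_4$ and $+1$ appearing in $c'$. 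Finally $n^{-1}=n_0^{-1}(1+\ell/n_0)^{-1}=n_0^{-1}(1+O(n_0^{-1/2}))$, which is again absorbed into constants.

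The main technical obstacle is ensuring that the 2D Edgeworth expansion is valid \emph{uniformly} in $n\in[n_0,n_0+\sqrt{n_0}]$ and in the placement of the unit rectangle at a bounded distance from the mean, with an error that beats the leading term $n_0^{-1}$. This is precisely what \eqref{assump:cramer} buys us: it is the classical 2D analogue of Cramér's condition that underlies the Bhattacharya--Rao type expansion, and it is the only place where the strong nonlattice assumption on $\mu_{\bb \xi^*}$ is used in the argument.
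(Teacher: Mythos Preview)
Your proposal is correct and follows essentially the same approach as the paper: exponential change of measure to $\P_{\bb \xi^*}$, bounding $e^{-\skalar{\bb \xi^*,\bb S_n}}$ on the target rectangle to extract the factor $\eps^{-\xi_2^*}t^{-(\xi_1^*+\xi_2^*)}$, then a two-dimensional Edgeworth expansion under $\P_{\bb \xi^*}$ combined with the observation that the standardized center of the rectangle stays in a bounded region for $\ell\le\sqrt{n_0}$ and $t\ge t_0(\eps)$. The only cosmetic difference is that the paper writes out the Edgeworth expansion $n\P_{\bb \xi^*}(I_n)=n\iint_{C_n}h\,d\bb w+nH_2(C_n)+o(1)$ and bounds the three pieces separately, whereas you package this as a local CLT with $o(1)$ error; both rely on the same Bhattacharya--Rao result (uniform over convex sets) under \eqref{assump:cramer}.
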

\begin{proof}
	We abbreviate
		\begin{equation*}
		I_n:=\left \{ n_0\rho <S _{n,1}< n_0 \rho +1,\ n_0\rho +\log \eps <S _{n,2}< n_0 \rho +\log \eps +1\right \}.
	\end{equation*}		
	To estimate the probabilities in \eqref{eq:below} and \eqref{eq:above}, we will use an Edgeworth expansion (see \eqref{eq:Edge}) with respect to the shifted measure $\P_{\bb \xi^*}$, under which $\bb S_n$ has drift $\rho (1,1)$. We introduce for $i=1,2$ the sums
	\begin{equation} \label{eq:Wn}
		W_{n,i}:=\frac{S_{n,i}-n\rho }{\s _i\sqrt{n}}
	\end{equation}
	which are standardized under $\P_{\bb \xi^*}$ (note that $W_1^i=V_i$).
	Using that $n=n_0+\ell$, we can rewrite this definition as
	\begin{equation*}
	\s _i\sqrt{n}W_{n,i}=S_{n,i}-n\rho = S_{n,i}-n_0\rho -\ell \rho 
\end{equation*}
to  obtain
\begin{equation*}
	I_n= \{ -\ell \rho < \s _1\sqrt{n}W_{n,1} < 1 -\ell \rho ,\ -\ell \rho +\log \eps < \s _2\sqrt{n}W_{n,2} < 1+\log \eps -\ell \rho \}.
\end{equation*}
	On $I_n$ we  have 
	$  0<\ell\rho + \s _1\sqrt{n}W_{n,1} < 1 $ and 
	$ \log \eps <\ell\rho + \s _2\sqrt{n}W_{n,2} <\log \eps +1$. Hence on $I_n$,
	\begin{equation*}
		\langle \bb \xi^* , \mathbf{S_n} \rangle =	 \langle \bb \xi^*,n_0(\rho ,\rho )\rangle +
		\langle \bb \xi^*, \ell (\rho , \rho )+ \sqrt{n}( \s _1W_{n,1}, \s _2W_{n,2}) \rangle \leq 
		(\xi^* _1+\xi^* _2)n_0\rho + \xi^* _1+\xi^* _2+ \xi^* _2\log \eps
	\end{equation*}
	and so
	\begin{align*}
		\P (I_n)&=\E _{\bb \xi^*}\big[e^{-\langle \bb \xi^*,\mathbf{S_n} \rangle}\Ind {I_n}\big] 
		\geq  t^{-\xi^*_1-\xi^*_2}e^{-(\xi^*_1+\xi^*_2)}\eps ^{-\xi^* _2} 
		\P _{\bb \xi^*}(I_n)
	\end{align*}
	In order to prove \eqref{eq:below}, recall that $n=n_0 +\ell$ with $\ell \leq \sqrt{n_0}$. It thus suffices to prove that for sufficiently large $n$ (or equivalenty, sufficiently large $t$), there is $c>0$ such that
	\begin{equation}\label{eq:belowI}
		n\P _{\bb \xi^*}(I_n)\geq c.
	\end{equation}
	
	Let
	\begin{equation*}
		C_n= \{ (w_1, w_2): -\ell \rho < \s _1\sqrt{n}w_1 < 1 -\ell\rho , -\ell\rho +\log \eps  < \s _1\sqrt{n}w_2 < 1 -\ell\rho +\log \eps \}.
	\end{equation*}
	Since $\bb \xi^* \in \interior{I}$, it holds that $\E _{\bb \xi^*}[\| \mathbf{U}\| ^4]<\8$ and by the Edgeworth expansion \eqref{eq:Edge}, we have
	\begin{equation*}
		n \P _{\bb \xi^*}(I_n) = n \P _{\bb \xi^*}(\bb W_n \in C_n) = n \iint _{C_n} h (\bb w)\  d \bb w + nH_2 (C_n)+ o(1),
	\end{equation*}
	uniformly for all $n$ and $C_n$ where  $H_2 $  is the measure with density 
	$n^{-1\slash 2}P_1 \psi +n^{-1}P_2 \psi   $ for certain polynomials $P_1, P_2$  as in \eqref{eq:Edge}.  
	But on $C_n$, for sufficiently large $n$ (and hence $t$), we have 
	\begin{align*}
		\| \bb w\| ^2~=&~ w_1^2 + w_2^2 \le \frac{(1+\ell\rho)^2}{\sigma_1^2 n} + \frac{(1+\ell \rho+ \log \eps)^2}{\sigma_2^2 n}\\
		\leq&~ (\rho +1) ^2(\s _1^{-2}+\s _2^{-2})= c(\rho, \s _1, \s _2)
	\end{align*}
	because $0\leq \frac{\ell}{\sqrt{n}}\leq 1$ and $\frac{\abs{\log \eps}}{\sqrt{n}}\leq 1$ if $n$ (or equivalently, $t$) large enough. 
	Hence, using \eqref{eq:bound:psi} and that the Lebesgue measure of the set $C_n$
	is $\frac{1}{\s _1 \s _2n}$, we have
	\begin{equation*}
		n\iint _ {C_n}h (\bb w)\ d\bb w\geq n c_3 e^{-c_1 c(\rho, \s _1, \s _2)}  \frac{1}{n} (\s _1\s _2)^{-1}=c_3 e^{-c_1 c(\rho, \s _1, \s _2)}  (\s _1\s _2)^{-1}.
	\end{equation*}
	Similarly,
	\begin{align*}
		nH_2(C_n)&\leq n\cdot n^{-1\slash 2}\iint _{C_n} (P_1(\bb w)+P_2(\bb w))\psi (u)\ d\bb w\\
		&\leq n\cdot n^{-1\slash 2}\iint _{C_n} c_4 \  d \bb w \\
		&\leq c_4 n^{1\slash 2}  \frac{1}{n}(\s _1\s _2)^{-1}\leq c_4 n^{-1\slash 2} (\s _1\s _2)^{-1} \to 0
	\end{align*}
	Hence \eqref{eq:belowI} follows.
Similarly, we obtain the upper estimate 
\begin{align*}
	\P (I_n)\leq \eps ^{-\xi^* _2} e^{-\langle \bb \xi^*,n_0(\rho ,\rho )\rangle} \P _{\bb \xi^*}(I_n)
	\leq \eps ^{-\xi^* _2}t^{-\xi^*_1-\xi^*_2} n^{-1} c'
\end{align*}
from which \eqref{eq:above} follows.
\end{proof}
\begin{proof}[Proof of Proposition \ref{thm:keyasym}]
	Observe that the estimate \eqref{eq:below} considers a single $n=n(\ell)=n_0+\ell$ and is of order $n_0^{-1}$. The desired estimate in \eqref{eq:2piestim} is of order $n_0^{-1/2}$ and considers the existence of some $n$. The obvious idea now it to take the union of the probabilites estimated in \eqref{eq:below} over the range $\ell \in \{1, \dots \sqrt{n_0}\}$, using the inclusion-exclusion principle. Then the leading term (sum of all probabilities) has the right order. To control the second term (sum of probabilites of intersections) we need however to introduce a {\em grid size} $M$, that is, we will only sum over $\ell \in \{kM \, : \, kM \leq \sqrt{n_0}\}$. This will give us enough control to bound the probability of the intersections of events.

For the subsequent calculations, it will be convenient to index the sets $I_{n(k)}$ with $k$, using the relation $n=n_0+Mk$. Therefore, denote
$$I_k:= I_{n(k)} = \left \{  n_0\rho <S _{n,1}< n_0 \rho +1, n_0\rho +\log \eps <S _{n,2}< n_0 \rho +\log \eps +1\right \}.$$ 
By \eqref{eq:below} we have
\begin{equation*}
	\sum _{k=1}^{\sqrt{n_0}/M} \P (I_k)\geq \frac{\sqrt{n_0}}{M}c\eps ^{-\xi^* _2}t^{-\xi^* _1-\xi^* _2}n_0^{-1}
\end{equation*}
for $t\geq t(\eps )$. Choose $\bb \zeta \in [0,1]^2$ such that $\beta=\phi (\bb \zeta )<1$ and $M$ such that
\begin{equation*}
	\frac{\beta ^{M}}{1-\beta ^M}c'e^{\zeta _1+\zeta _2}\leq \frac{1}{4}c.
\end{equation*} 
We shall prove that  
\begin{equation}\label{eq:sum:kl}
	\sum _{\begin{subarray}{c}
			k,l=1\\ k \neq l
		\end{subarray}}^{\sqrt{n_0}/M} \P (I_k\cap I_l)\leq 2 \frac{\beta ^{M}}{1-\beta ^M}\frac{\sqrt{n_0}}{M}
	c'\eps ^{-\xi^* _2}t^{-\xi^*_1-\xi^*_2}n_0^{-1}e^{\zeta _1+\zeta _2} \quad \bigg( \le \frac12 c \eps^{-\xi^*_2} t^{-\xi^*_1-\xi^*_2} M^{-1} n_0^{-1/2} \bigg).
\end{equation}
Then for $t\geq t (\eps )$ 
\begin{equation*}
	\P \left (\bigcup _k I_k\right )\geq \sum _k \P (I_k)-\sum _{k\neq l} \P (I_k\cap I_l)\geq \frac{c}{2}\eps ^{-\xi^* _2}M^{-1}t^{-\xi^*_1-\xi^*_2}n_0^{-1\slash 2}
\end{equation*}
and \eqref{eq:2piestim} will follow since
$$ t^{\xi^*_1+\xi^*_2}(\log t)^{1\slash 2}	\P \left ( \exists\, n \, : \,  \Pi ^{(1)}_n>t, \Pi ^{(2)}_n> \eps t \right ) \geq t^{\xi^*_1+\xi^*_2}(\log t)^{1\slash 2}\P \left (\bigcup _k I_k\right ) \geq \frac{c}{2}\eps ^{-\xi^* _2}M^{-1} $$

\textbf{Proof of \eqref{eq:sum:kl}}.
For $k<l$ and sufficiently large $t$, we use that on $I_k \cap I_l$, we have control over the increments $\bb S_{n_0+l M} - \bb S_{n_0 + kM}$, as follows. The crucial point is that by enlarging the grid size $M$, the probability of relatively large increments decreases exponentially fast. 
\begin{align}
	&~\P (I_k\cap I_l) \notag\\
	\leq&~ \P \left (t<\Pi_{n_0+kM}^{(1)}<te,\ \eps t<\Pi_{n_0+kM}^{(2)}<\eps te,\  t<\Pi_{n_0+lM}^{(1)},\ \eps t<\Pi_{n_0+lM}^{(2)}  \right ) \notag \\
	\leq&~ \P \left (t<\Pi_{n_0+kM}^{(1)}<te,\ \eps t<\Pi_{n_0+kM}^{(2)}<\eps te,\  \exp{(S_{n_0+lM,i}-S_{n_0+kM,i})}>e^{-1},\ i=1,2 \right )	\notag\\
	=&~ \P \left (t<\Pi_{n_0+kM}^{(1)}<te,\ \eps t<\Pi_{n_0+kM}^{(2)}<\eps te \right) \P \left(\Pi_{(l-k)M}^{(i)}>e^{-1},\ i=1,2 \right) \label{eq:PIkIl}	
\end{align}
The first probability in \eqref{eq:PIkIl} is bounded by \eqref{eq:above}, for the second probability we use the Markov inequality as follows. Let $\bb \zeta$ be as above.
\begin{align*}
	&~\P \left(\exp{(S_{(l-k)M,i}}>e^{-1},\ i=1,2 \right) = \P \left(\exp{(\zeta_i S_{(l-k)M,i})}>e^{-\zeta_i},\ i=1,2 \right) \\
	\leq &~ \P \left(\exp{(\zeta_1 S_{(l-k)M,1}+ \zeta_2 S_{(l-k)M,2})}>e^{-\zeta_1-\zeta_2},\ i=1,2 \right) \\
	\leq&~  \phi (\bb \zeta )^{(l-k)M}e^{\zeta _1+\zeta _2}.
\end{align*}
We have thus obtained (abbreviating $\beta=\phi(\bb \zeta)<1$). 
$$ \P (I_k\cap I_l) \leq \frac{c'}{n_0}\eps ^{-\xi^* _2}t^{-\xi^*_1-\xi^*_2} \beta^{(l-k)M}e^{\zeta _1+\zeta _2}$$
Considering \eqref{eq:sum:kl}, we first bound for any fixed $k$ the sum over $l>k$ using a geometric series.
\begin{align*}
	\sum _{l>k}\P (I_k\cap I_l)&\leq \frac{c'}{n_0}\eps ^{-\xi^* _2}t^{-\xi^*_1-\xi^*_2} e^{(\zeta _1+\zeta _2)} \sum _{l>k}\beta ^{(l-k)M}
	\leq \frac{c'}{n_0}\eps ^{-\xi^* _2}t^{-\xi^*_1-\xi^*_2}\beta ^{M}\left (1-\beta ^M\right )^{-1}e^{(\zeta _1+\zeta _2)}
\end{align*}
and then estimate the sum over $k$ by just multiplying with the number of summands.
\begin{equation*}
\sum _{\begin{subarray}{c}
		k,l=1\\ k \neq l
\end{subarray}}^{\sqrt{n_0}/M} \P (I_k\cap I_l) = 2	\sum _{\begin{subarray}{c}
k,l=1\\ k < l
\end{subarray}}^{\sqrt{n_0}/M} \P (I_k\cap I_l)\leq 2 \frac{c'}{n_0}\frac{\sqrt{n_0}}{M}\eps ^{-\xi^* _2}t^{-\xi^*_1-\xi^*_2}\beta ^{M}\left (1-\beta ^M\right )^{-1}e^{(\zeta _1+\zeta _2)}
\end{equation*}
This proves \eqref{eq:sum:kl} and finishes the proof of the Theorem.
\end{proof}

\section{The Renewal Theorem on $\R^2 \times K$}\label{sect:2dRenewal.with.K}

In this section, we prove a renewal theorem for random walks on $\R^2 \times K$, where $K$ is a finite Abelian group with unit $e$. In our setting, $K$ is a subgroup of $\Z_2^d$.

Let  $\wt Q $ be a probability measure on the group $G=\R ^2\times K=\R ^2 \times \Z _2^d$.
We define $Q $ on $\R ^2$ by $Q (I)=\wt Q (I\times K)$. Let $\bb \rho  =(\rho _1, \rho _2)$ be the mean of $Q $.
Let $\mathbb{U}=\sum _{n=0}^{\8}Q ^n$, $ \mathbb{\wt U}=\sum _{n=0}^{\8}\wt Q ^n$. 

For $\bfm =(\bfm_1,\bfm_2)\in \Z ^2$, let $F_{\bfm}=\{ \mathbf{s}\in \R^2: \bfm _j\leq s_j\leq \bfm_{j}+1, j=1,2\}$.
For a continuous function $\psi $ on $G$ let 
\begin{equation*}
	\| \psi \| _{\dri} =\sum _{k\in K}\sum _{\bfm}\sup _{\mathbf{s}\in F_{\bfm}}\left ( (1+\|\mathbf{s}\|)|\psi (\mathbf{s},k)|\right )
\end{equation*}

\begin{theorem}\label{thm:renewalextended}
	Suppose that the closed subgroup generated by $\supp(\wt{Q})$ is equal to $G=\R^2 \times K$ and that $\bb \rho = \int \bb x \mu(d\bb x) \neq \bb 0$. Assume further that $\int \norm{ \bb x}^2 \, Q(d \bb x) < \infty$.
	Then there is $c>0$ such that for every continuous function $f $ with  $\| f \| _{\dri}<\8 $ and every $k_1 \in K$ we have
	\begin{equation}\label{eq:simple}
		\lim _{r\to \8}r^{1\slash 2}\, f *\mathbb{\wt U}(r\bb \rho ,k_1)=c\sum _{k\in K}\int _{\R ^2}f (\mathbf{s},k) \ d\mathbf{s}.
	\end{equation}
	
\end{theorem}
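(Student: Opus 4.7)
The plan is to reduce to the two-dimensional renewal theorem on $\R^2$ (the case $K=\{e\}$, stated in the appendix) via Fourier analysis on the finite abelian group $K$. For each character $\chi \in \hat K$, introduce the complex Radon measure
\begin{equation*}
  Q_\chi(B) := \sum_{k \in K} \overline{\chi(k)}\, \wt Q(B \times \{k\})
\end{equation*}
on $\R^2$ and, for a function $f$ on $\R^2 \times K$, its $\chi$-coefficient $f_\chi(\bb s) := \sum_{k \in K} \overline{\chi(k)}\, f(\bb s, k)$. A direct verification shows that convolution on $G = \R^2 \times K$ factors through characters of $K$: $(\wt Q^{*n})_\chi = Q_\chi^{*n}$ and $(f * \wt{\mathbb{U}})_\chi = f_\chi * \mathbb{U}_\chi$, where $\mathbb{U}_\chi := \sum_{n \ge 0} Q_\chi^{*n}$ (interpreted in the distributional/Fourier sense when direct summation is not available). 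Inverting the Fourier transform on $K$ yields
\begin{equation*}
  f * \wt{\mathbb{U}}(r\bb\rho, k_1) \;=\; \frac{1}{|K|} \sum_{\chi \in \hat K} \chi(k_1)\, f_\chi * \mathbb{U}_\chi(r\bb\rho).
\end{equation*}

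For the trivial character $\chi_0 \equiv 1$, the measure $Q_{\chi_0}$ is precisely the $\R^2$-marginal $Q$ of $\wt Q$, i.e.\ a probability measure with nonzero mean $\bb\rho$ and finite second moments, whose support generates $\R^2$ as a closed subgroup (this follows from the analogous property for $\supp\wt Q$ on $G$ by projecting onto $\R^2$). The renewal theorem on $\R^2$ from the appendix therefore applies and gives
\begin{equation*}
  \lim_{r \to \infty} r^{1/2}\, f_{\chi_0} * \mathbb{U}(r\bb\rho) \;=\; c \int_{\R^2} f_{\chi_0}(\bb s)\, d\bb s \;=\; c \sum_{k \in K} \int_{\R^2} f(\bb s, k)\, d\bb s,
\end{equation*}
with $c$ the constant produced by that theorem. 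This is the term that will survive in the limit.

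For each non-trivial character $\chi \neq \chi_0$, the target estimate is $r^{1/2}\, f_\chi * \mathbb{U}_\chi(r\bb\rho) \to 0$. The support hypothesis guarantees that $\hat Q_\chi(\bb t) = 1$ only at isolated frequencies $\bb t_\chi \in \R^2$, and critically never at $\bb t_\chi = \bb 0$: if it did, the character $(\bb 0,\chi)$ would be constant equal to $1$ on $\supp \wt Q$, contradicting that $\supp \wt Q$ generates all of $G$. A Fourier/local-CLT analysis analogous to the one underlying the $\R^2$ renewal theorem, but now localized around each $\bb t_\chi$ rather than around $\bb 0$, produces an oscillatory factor $e^{i\langle \bb t_\chi, r\bb\rho\rangle}$ from the pole-like singularity of $1/(1-\hat Q_\chi)$; summing over $n$ inside the CLT window of width $\sqrt{r}$ generates cancellation that yields $o(r^{-1/2})$ decay. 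A second, perhaps cleaner route is to decompose $(\bb S_n,\bb L_n)$ at successive returns of $\bb L_n$ to $e \in K$ (an irreducible recurrent chain on $K$ by the support hypothesis), reducing the problem to the $\R^2$ renewal theorem applied to the embedded walk, plus a cyclic bookkeeping of the residual time within a cycle.

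The main obstacle is precisely Step~3: rigorously establishing $o(r^{-1/2})$ decay for each non-trivial character using only the support and second-moment hypotheses, without any Cram\'er-type condition on $\wt Q$, where oscillatory Fourier singularities at nonzero frequencies have to be controlled by the same local-CLT technology that drives the $\R^2$ renewal theorem. Once this is in place, combining the characters via Fourier inversion gives the claim: $\chi_0(k_1)=1$ makes the limit independent of $k_1$, and the factor $|K|^{-1}$ is absorbed into the constant $c$.
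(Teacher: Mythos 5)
The paper proves this by a Choquet--Deny argument, not by Fourier analysis in the $K$-variable: one first shows that the scaled shifted renewal measures $r^{1/2}\,\delta_{(-r\bb\rho,k)}*\wt{\mathbb U}$ are weakly relatively compact (by comparing $\wt{\mathbb U}((r\bb\rho,k)(I\times K))$ to $\mathbb U(r\bb\rho+I)$ and invoking the classical $\R^2$ renewal theorem), then shows that any subsequential limit $\nu$ satisfies $\nu=\nu*\wt Q$, and the Choquet--Deny theorem for abelian groups forces $\nu$ to be a Haar measure on $G=\R^2\times K$; the constant is identified, and the subsequential limits shown to coincide, by testing on functions independent of $k$. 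This cleanly bypasses any analysis of oscillatory singularities.

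Your character-decomposition strategy is a plausible alternative route in principle, but as written it has a genuine gap exactly where you say it does: the claim that $r^{1/2}\,f_\chi*\mathbb U_\chi(r\bb\rho)\to 0$ for every non-trivial $\chi$ is not established, and this is the whole content of the theorem beyond the $\R^2$ case. It is not merely a technical loose end. First, the object $\mathbb U_\chi=\sum_{n\ge0}Q_\chi^{*n}$ is not obviously a measure: since $\|Q_\chi\|_{\mathrm{TV}}$ is typically equal to $1$ (for instance whenever, conditionally on $\bb U$, the group component $\bb K$ is deterministic), the series does not converge in total variation, and ``interpreted in the distributional/Fourier sense'' is not a definition. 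Second, $\hat Q_\chi(\bb 0)=1$ can fail, but $|\hat Q_\chi(\bb t_\chi)|=1$ at some nonzero $\bb t_\chi$ is possible, and controlling the renewal sum near such points requires a local-limit analysis at shifted frequencies that is not implied by the hypotheses here (only second moments and full support, no Cram\'er-type condition on $\wt Q$). Your alternative cycle-decomposition idea would need to handle the distribution of the residual time in a cycle and is likewise only sketched. Finally, you do not address the passage from $\mathcal C_c(G)$ to general directly Riemann integrable $f$, which the paper handles separately via the uniform bound of Lemma \ref{lem:dRi.proof.of.Renewal.Theorem}. As it stands the proposal is an outline of a different possible strategy, not a complete proof.
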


\begin{remark}
	We will apply Theorem \ref{thm:renewalextended} for $\wt{Q}=\wt{\mu}_{\bb \xi^*}$. Then assumption \eqref{assump:cramer} implies that $\supp(\mu_{\bb \xi^*})$ generates $\R^2$ as an additive group, and $K$ is chosen as the subgroup of $\Z_2^d$ that is generated by $\supp(\bb K)$. Hence it holds that  the closed subgroup generated by $\supp(\wt{\mu}_{\bb \xi^*})$ is equal to $G=\R^2 \times K$ and the theorem is applicable; the finiteness of moments being guaranteed since $\bb \xi^* \in \interior{I}$.
\end{remark}

\begin{proof}
	Let $W$ be a bounded measurable subset of $G$. Then there is a bounded set $I\subset \R ^2$ such that $W\subset I\times K$ and $\partial I$ has Lebesgue measure $0$. Then
	\begin{equation*}
		r^{1\slash 2}\mathbb{\wt U} ((r\bb \rho,k)W)\leq r^{1\slash 2}\mathbb{\wt U} ((r\bb \rho,k)(I\times K))\leq r^{1\slash 2}\mathbb{\wt U} ((r\bb \rho+I)\times K)= r^{1\slash 2} \mathbb{U} (r\bb \rho+I)
	\end{equation*}
	is bounded, by the (classical) renewal theorem on $\R^2$ (see Theorem \ref{thm:Stam1}), as a function of $r$. Observe that
	\begin{equation*}
		\mathbb{\wt U} ((r\bb \rho,k)W)=\delta _{(-r\bb \rho,k)}*\mathbb{\wt U}(W).
	\end{equation*}
	Hence the set of measures $\delta _{(-r\rho,k)}*\mathbb{\wt U}$ is weakly relatively compact which means that from any sequence $r_m\to \8 $ we can extract a subsequence $r_n$ such that for every $f\in \mathcal{C}_c(G)$
	\begin{equation*}
		\lim _{r_n}	r_n^{1\slash 2}\,f *\delta _{(-r_n\rho,k)}*\mathbb{\wt U}
	\end{equation*}
	exists and defines a Radon measure $\nu $. Since $\mathbb{\wt U}*\wt Q = \mathbb{\wt U}  -\d _{(0,e)}$ and $\lim_{r_n} r_n^{1/2} f * \delta(-r_n \rho,k)= \lim_{r_n} r_n^{1/2} f(-r_n \rho,k)=0$ we have  
	\begin{equation*}
		f * \nu = \lim _{r_n}	r_n^{1\slash 2}f *\delta _{(-r_n\rho ,k)}*(\mathbb{\wt U} - \delta_{(0,e)}) = \lim _{r_n}	r_n^{1\slash 2}f *\delta _{(-r_n\rho ,k)}*\mathbb{\wt U} *\wt Q = f *\nu *\wt Q
	\end{equation*}
	and so $\nu = \nu *\wt Q.$
	Therefore, by the Choquet-Deny Theorem for Abelian groups \cite{Choquet-Deny1960}, $\nu $ is a Haar measure on $G$, that is $\nu = cd\lambda ^2\times dk$ for some constant $c$, with $\lambda^2$ denoting the Lebesgue measure on $\R^2$ and $dk$ the counting measure on $K$. Evaluating
	$$ \lim _{r_n}	r_n^{1\slash 2}\,f *\delta _{(-r_n\rho,k)}*\mathbb{\wt U} ~=~ c \sum_{k \in K} \int f(\bb s,k) d\bb s$$
	for a function $f(\bb s,k)=f(\bb s)$ that does not depend on $k$, we may use the two-dimensional limit theorem (see Theorem \ref{thm:Stam1}), to obtain the value $c$ which hence does not depend on the sequence. Hence all subsequential limits coincide and \eqref{eq:simple} follows for $f \in \Cf_c(G)$.
	
	Consider now a continuous function $f$ such that $\norm{f}_{\dri}<\infty$. This implies that for all $\eps>0$ there is a finite set $M_\eps \subset \Z^2$ such that
	$$\sum _{k\in K}\sum _{\bfm \in M_\eps^c}\sup _{\mathbf{s}\in F_{\bfm}}\left ( (1+\|\mathbf{s}\|)|f (\mathbf{s},k)|\right )<\frac{\eps}{3 C}$$
	where $C$ is the constant from Lemma \ref{lem:dRi.proof.of.Renewal.Theorem}. 
	Hence we can decompose $f$ in such a way that
	\begin{equation*}
		f =f _1 +f _2, \quad \mbox{where}\ f_1\in \Cf_c(G),\ \| f_2 \| _{\dri}\leq \eps/3 .  
	\end{equation*}
	By Lemma \ref{lem:dRi.proof.of.Renewal.Theorem} we obtain
	\begin{align*}
		&~ \abs{r^{1\slash 2}\,f *\delta _{(-r\rho,k)}*\mathbb{\wt U} - c \sum_{k \in K} \int f(\bb s,k) d\bb s} \\
		\le &~ \abs{r^{1\slash 2}\,(f-f_1) *\delta _{(-r\rho,k)}*\mathbb{\wt U}} + \abs{r^{1\slash 2}\,f_1 *\delta _{(-r\rho,k)}*\mathbb{\wt U} - c \sum_{k \in K} \int f_1(\bb s,k) d\bb s} \\
		&~ + \abs{c \sum_{k \in K} \int (f-f_1)(\bb s,k) d\bb s} \\
		\le&~ C \norm{f_2}_{\dri} + \abs{r^{1\slash 2}\,f_1 *\delta _{(-r\rho,k)}*\mathbb{\wt U} - c \sum_{k \in K} \int f_1(\bb s,k) d\bb s} + C \norm{f_2}_{\dri} \le \frac{\eps}{3} + \frac{\eps}{3} + \frac{\eps}{3}
	\end{align*}
	for all sufficiently large $r$. This proves \eqref{eq:simple} for any $f \in \mathcal{C}(G)$ with $\norm{f}_{\dri}<\8$. 
\end{proof}

\begin{lemma}\label{lem:dRi.proof.of.Renewal.Theorem} There is a constant $C$ such that for every $f \in \mathcal{C}(G) $	\begin{equation}\label{eq:bound:dRi.rho}
		\sup_{r>0,k \in K} r^{1\slash 2}\, \abs{f}  *\mathbb{\wt U}(r\bb \rho,k)\leq C \| f \| _{\dri}.
	\end{equation}
	as well as
	\begin{equation}\label{eq:bound:dRi.any}
		\sup_{\bb s \in \R^2,k \in K} \abs{f}  *\mathbb{\wt U}(\bb s,k)\leq C \| f \| _{\dri}.
	\end{equation}
\end{lemma}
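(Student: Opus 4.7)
The plan is to reduce both bounds to standard two-dimensional renewal estimates by (i) absorbing the finite group coordinate via the trivial domination $\wt{\mathbb{U}}(C\times\{k\}) \le \wt{\mathbb{U}}(C\times K) = \mathbb{U}(C)$, and (ii) discretising $\R^2$ into the unit cubes $F_{\bb m}$. Concretely, starting from the convolution on $G$ I would write, for any $(\bb s_0,k_0)\in G$,
\begin{align*}
|f|*\wt{\mathbb{U}}(\bb s_0,k_0)
&= \sum_{k\in K}\int_{\R^2}|f(\bb s_0-\bb u,\,k_0-k)|\,\wt{\mathbb{U}}(d\bb u,k) \\
&\le \sum_{k'\in K}\sum_{\bb m\in\Z^2} M_{\bb m,k'}(f)\,\mathbb{U}(\bb s_0-F_{\bb m}),
\end{align*}
where $M_{\bb m,k'}(f):=\sup_{\bb v\in F_{\bb m}}|f(\bb v,k')|$.

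For \eqref{eq:bound:dRi.any} I would invoke the uniform bound $\sup_{\bb y\in\R^2}\mathbb{U}(\bb y+F_{\bb 0})\le C$, a standard consequence of the nondegeneracy and finite-variance hypotheses on $Q$ (recorded in the appendix). Plugging it into the display and using $(1+\|\bb s\|)\ge 1$ on every $F_{\bb m}$ gives $|f|*\wt{\mathbb{U}}(\bb s_0,k_0)\le C\sum_{\bb m,k'}M_{\bb m,k'}(f)\le C\|f\|_{\dri}$, uniformly in $(\bb s_0,k_0)$.

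For \eqref{eq:bound:dRi.rho} at $\bb s_0=r\bb\rho$ I would split the $\bb m$-sum at the threshold $\|\bb m\|\le c\, r$ for a small constant $c<\|\bb\rho\|$. On the inner range the shifted point $r\bb\rho-\bb m$ has norm comparable to $r$, so a uniform version of Stam's estimate (cf.\ Theorem~\ref{thm:Stam1}) yields $\mathbb{U}(r\bb\rho-F_{\bb m})\le C\,r^{-1/2}$; multiplying by $r^{1/2}$ absorbs the decay and the inner contribution is bounded by $C\sum_{\bb m,k'}M_{\bb m,k'}(f)\le C\|f\|_{\dri}$. On the outer range $\|\bb m\|>c\,r$ I would use only the uniform bound $\mathbb{U}(r\bb\rho-F_{\bb m})\le C$ together with the weight in the dRi norm:
$$\sum_{\|\bb m\|>cr} M_{\bb m,k'}(f) \le \frac{1}{1+cr}\sum_{\bb m}(1+\|\bb m\|)M_{\bb m,k'}(f) \le \frac{C'}{r}\|f\|_{\dri},$$
so that the outer contribution, after multiplication by $r^{1/2}$, is of order $r^{-1/2}\|f\|_{\dri}$. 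Combining the two ranges bounds $r^{1/2}|f|*\wt{\mathbb{U}}(r\bb\rho,k_0)$ uniformly for $r\ge 1$; for $r\in(0,1]$ the estimate is subsumed by \eqref{eq:bound:dRi.any}.

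The main obstacle is the uniform-in-$\bb m$ decay $\mathbb{U}(r\bb\rho-\bb m+F_{\bb 0})\le C\,r^{-1/2}$ in the inner range: Stam's theorem supplies only pointwise convergence $r^{1/2}\mathbb{U}(r\bb\rho+F_{\bb 0})\to c$, and uniformity across the $O(r^2)$ many translations there requires quantitative control of the two-dimensional local CLT. This is exactly where the Edgeworth-type estimates already deployed in Lemma~\ref{lem:application.Edgeworth} intervene, and the needed uniform bound should be available from the appendix's compendium on two-dimensional random walks.
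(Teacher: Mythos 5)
Your reduction---discretize into the cubes $F_{\bb m}$, absorb the group coordinate via $\Ind{F_{\bb m}\times K}*\mathbb{\wt U}=\Ind{F_{\bb m}}*\mathbb{U}$, then split the sum over $\bb m$---is the same skeleton as the paper's proof, and your treatment of \eqref{eq:bound:dRi.any} and of the outer range is correct. The split threshold differs (you cut at $\|\bb m\|\sim r$, the paper at $c_{\bb m}\sim r^{1/2}$), but that is immaterial.

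The gap you flag is real, and the heuristic you offer for the inner range is not the right sufficient condition. Having $\|r\bb\rho-\bb m\|$ comparable to $r$ does \emph{not} by itself give $\mathbb{U}(r\bb\rho-F_{\bb m})\le C\,r^{-1/2}$: Theorem~\ref{thm:Stam1} is a pointwise statement along the drift ray, and a point of norm of order $r$ may have a large component orthogonal to $\bb\rho$, about which Stam's theorem says nothing. What is needed is the uniformity in the orthogonal direction supplied by Theorem~\ref{thm:Carlsson}. Write $\bb m=s\bb\rho+\bb m_{\perp}$ with $\bb m_{\perp}\perp\bb\rho$; then $r\bb\rho-F_{\bb m}=(r-s)\bb\rho-\bb m_{\perp}-F_{\bb 0}$, and on your inner range (say $\|\bb m\|\le cr$ with $c<\|\bb\rho\|/2$) one has $|s|\le\|\bb m\|/\|\bb\rho\|\le r/2$, hence $r-s\ge r/2$. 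Applying Theorem~\ref{thm:Carlsson} with $t=(r-s)\|\bb\rho\|$ and orthogonal shift $\bb x=-\bb m_{\perp}$ gives $\mathbb{U}(r\bb\rho-F_{\bb m})\le C\big((r-s)\|\bb\rho\|\big)^{-1/2}\le C'\,r^{-1/2}$ uniformly over the inner range, precisely because Carlsson's bound holds for every orthogonal shift. This is exactly the step the paper performs. Once this citation and the decomposition are made explicit, your argument is complete.
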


\begin{proof} We concentrate first on proving \eqref{eq:bound:dRi.rho}.
	Let $c_m= \inf _{\mathbf{s}\in F_{\bfm}}(1+\| \mathbf{s}\|)$. We have
	\begin{equation*}
		|f (\mathbf{s},k_1)|\Ind {F_{\bfm}\times K}\leq c_{\bfm}^{-1}\left (\sum _{k\in K}\sup _{\mathbf{s}\in F_{\bfm}}\left ( (1+\|\mathbf{s}\|)| f (\mathbf{s},k)|\right )\right )\Ind {F_{\bfm}\times K}.
	\end{equation*}
	Since $\Ind {F_{\bfm}\times K}*\mathbb{\wt U}= \Ind {F_{\bfm}}*\mathbb{U}$, we have 
	\begin{align}
		r^{1\slash 2}\, \abs{f} *\mathbb{\wt U}(r\bb \rho,k_1)&\leq \sum _{\bfm} r^{1\slash 2}c_{\bfm}^{-1}\left (\sum _{k\in K}\sup _{\mathbf{s}\in F_{\bfm}}\left ( (1+\| \mathbf{s}\|)
		\abs{f (\mathbf{s},k)}\right )\right )\int _{\R ^2} \Ind {F_{\bfm}}(r\bb \rho -u)d\mathbb{U}(u) \notag \\
		&=\sum _{\bfm} \sum _{k\in K}\sup _{\mathbf{s}\in F_{\bfm}}\left ( (1+\| \mathbf{s}\|)\abs{f (\mathbf{s},k)}\right )r^{1\slash 2}c_{\bfm}^{-1}\mathbb{U} (r\bb \rho-F_{\bfm}). \label{eq:bound.by.dri}
	\end{align}
	It is enough to prove that
	\begin{equation*}
		\sup _{r>0,\bfm}r^{1\slash 2}c_{\bfm}^{-1}\mathbb{U}(r\rho-F_{\bfm})<\8 .
	\end{equation*}
	Consider fixed $D>0$, to be chosen below. For $\bfm $ such that $r^{1\slash 2}c_{\bfm}^{-1}\leq D$ we have 
	\begin{equation*}
		r^{1\slash 2}c_{\bfm}^{-1}\mathbb{U}(r\rho-F_{\bfm})\leq D\sup _{\bfm}\mathbb{U}(r\rho-F_{\bfm})\le D\sup _{\bb u \in \R^2}\mathbb{U}( \bb u + F_{\bb 0})< \8.
	\end{equation*}
	Indeed, given $\bb u\in \R^2$ let $N_{\bb u}=\inf \{ n\in \N : \bb S_n\in u + F_{\bb 0}\}$ where $\bb S_n$ is a random walk with increment law $\P( (\bb S_1-\bb S_0) \in \cdot)=Q$. Then by the Markov property
	\begin{align}
		\mathbb{U}(\bb u+F_{\bb 0})=&\E \Big[ \sum _{n=0}^{\8} \Ind {\bb u+F_{\bb 0}}(\bb S_n) \Big]=\E \Big[ \sum _{n=N_{\bb u}}^{\8} \Ind {\bb u+F_{\bb 0}}((\bb S_n-\bb S_{N_{\bb u}})+\bb S_{N_{\bb u}}) \Big] \notag\\
		\leq &\E \Big[ \sum_{k=0}^\infty \Ind {\{N_u=k\}} \sum _{n=k}^{\8} \Ind {\{\| S_n-S_{k}\| \leq \sqrt{2}\}} \Big]\leq \P(N_u<\8) \cdot \mathbb{U} \big(2 F_{\bb 0}\big). \label{eq:bound:U}
	\end{align}
	Note that $\mathbb{U}(2 F_{\bb 0})$ is finite since the random walk is transient.
	Suppose now that  $r^{1\slash 2}c_{\bfm}^{-1}\geq D$ i.e. $c_{\bfm} \leq r^{1\slash 2}\slash D$ and $r \ge D^2$ (since $c_{\bb m}\ge 1$). Using $F _{\bfm}=\bfm + F_{\bb 0}$ we have $\| \bfm \| \leq c_{\bb m} \leq \frac{1}{D}r^{1\slash 2}$. Now choose $D$ such that $\| \bfm\| / \norm{\bb \rho} \le \frac12 r^{1/2}$ for all $\bb m$ satisfying $r^{1/2} c_{\bb m} \ge D$. By Theorem \ref{thm:Carlsson}, there is a universal constant $C$ such that
	\begin{equation*}
		r^{1\slash 2}\mathbb{U}(r\bb \rho +\bb x +F_{\bb 0}) \le C 
	\end{equation*}
	for all $r \in \R$ and $\bb x$ orthogonal to $\bb \rho$. We may now decompose $\bb m=s{\bb \rho} + \bb m_{\perp}$ with $\bb m_{\perp} \perp \bb \rho$ and use that $s \le \norm{\bb m}/\norm{\rho} \le \frac12 r^{1/2}$.
	\begin{align*} r^{1/2} \mathbb{U} (r\rho - F_{\bb m}) &= r^{1/2} \mathbb{U} \big((r-s)\rho - \bb m_{\perp} - F_{\bb 0} \big)\\
		 &= \bigg(\frac{r}{r-s}\bigg)^{1/2} (r-s)^{1/2}\, \mathbb{U} \big((r-s)\rho - \bb m_{\perp} - F_{\bb 0} \big) \le C'
	\end{align*} 
	where we have used that $r/(r-s)$ is uniformly bounded since $s \le \frac12 r^{1/2}$ and $r \ge D^2$.
	
	\medskip
	
	The bound \eqref{eq:bound:dRi.any} follows in a similar, but simpler way: In \eqref{eq:bound.by.dri} we omit $r^{1/2} c_{\bb m}^{-1}$ and replace $r \rho$ by arbitrary $\bb s$. Then the assertion follows by using the uniform estimate for $\mathbb{U}(\bb u + F_{\bb 0})$, given in \eqref{eq:bound:U}.
\end{proof}
 
\appendix
\section{Limit Theorems for Random Walks in $\R^2$}
For the reader's convenience we quote here the two-dimensional Edgeworth expansion and the renewal theorems used in our proofs.

\subsection{Edgeworth Expansions}

\begin{theorem}[Corollary 20.5 in \cite{Bhattacharya2010}]
	Let $\bb Y_1,...,\bb Y_n$ be i.i.d random vectors with values in $\R ^2$ and law $Q$. Suppose that $\E [\bb Y_1]=0$, $\E [\|\bb Y_1\| ^4]<\8 $ and that the characteristic function $\hat{Q}$ of $Q$ satisfies  
	\begin{equation*}
		\limsup_{\|t \| \to \8}|\hat Q (t)|<1 .
	\end{equation*}	
	Let $\Sigma$ be the covariance matrix of $\bb Y_1$ and 
	\begin{equation}
		h(\bb x)= (2\pi )^{-1}(\det \Sigma)^{-1\slash 2}e^{-\langle \bb x,\Sigma^{-1} \bb x\rangle }.
	\end{equation}
	Let $Q_n$ be the law of $n ^{-1\slash 2}(\bb Y_1+...+\bb Y_n)$.
	Then
	\begin{equation}\label{eq:Edge}
		\sup _{C\in \mathcal{C}}\left | Q_n (C)-H (C)- H_2 (C)\right |=o(n^{-1}), 
	\end{equation}
	where $\mathcal{C}$
	is the family of convex sets $C \subset \R^2$;
	$H$ is the measure with density $h $, $H_2$ is the measure with density 
	$n^{-1\slash 2}P_1h +n^{-1}P_2h  $ and $P_1, P_2$ are polynomials depending only on $Q$.  	
\end{theorem}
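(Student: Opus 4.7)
The plan is to prove this multivariate Edgeworth expansion by the classical Fourier-analytic method, decomposing into three steps: (i) an asymptotic expansion of the characteristic function $\widehat Q_n$ near the origin, (ii) tail control via Cram\'er's condition, and (iii) a smoothing argument transferring the estimate from Fourier integrals to the uniform bound on convex sets.

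First, writing $\widehat Q_n(\bb t) = \widehat Q(\bb t/\sqrt n)^n$ and using $\E[\bb Y_1] = \bb 0$ together with $\E\|\bb Y_1\|^4 < \8$, I would expand the cumulant generating function
\begin{equation*}
\log \widehat Q(\bb u) = -\tfrac12 \skalar{\bb u, \Sigma\, \bb u} + \sum_{|\nu|=3}\frac{\kappa_\nu (i\bb u)^\nu}{\nu!} + \sum_{|\nu|=4}\frac{\kappa_\nu (i\bb u)^\nu}{\nu!} + o(\|\bb u\|^4).
\end{equation*}
Substituting $\bb u = \bb t/\sqrt n$, multiplying by $n$ and exponentiating produces, for $\|\bb t\|\le n^{1/6}$,
\begin{equation*}
\widehat Q_n(\bb t) = e^{-\skalar{\bb t, \Sigma \bb t}/2}\bigl(1 + n^{-1/2}\widehat P_1(i\bb t) + n^{-1}\widehat P_2(i\bb t)\bigr) + r_n(\bb t),
\end{equation*}
with an error satisfying $|r_n(\bb t)|\, e^{c\|\bb t\|^2} = o(n^{-1})$ uniformly in this range; the polynomials $P_1, P_2$ in the statement arise as inverse Fourier transforms of $e^{-\skalar{\bb t,\Sigma \bb t}/2}\,\widehat P_j(i\bb t)$, i.e.\ Hermite-type polynomials built from third and fourth cumulants of $Q$.

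Next, invoke Cram\'er's condition $\limsup_{\|\bb t\|\to\8}|\widehat Q(\bb t)|<1$: pick $\eta<1$ and $R_0$ with $|\widehat Q(\bb t)|\le \eta$ for $\|\bb t\|\ge R_0$, which forces $|\widehat Q_n(\bb t)|\le \eta^n$ on $\|\bb t\|\ge R_0\sqrt n$. Combined with a quadratic Taylor bound for $|\widehat Q_n|$ on the intermediate annulus $n^{1/6}\le \|\bb t\|\le R_0\sqrt n$, all Fourier contributions from outside the Gaussian range decay superpolynomially and are absorbed into the $o(n^{-1})$ error.

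The hard part, and the essential contribution of the Bhattacharya--Rao framework, is the passage from smooth test functions to the uniform bound over all convex sets. Using a nonnegative smoothing kernel $K_\varepsilon$ supported in a ball of radius $\varepsilon$ with $\int K_\varepsilon = 1$ one obtains
\begin{equation*}
\sup_{C\in\mathcal C}\bigl|Q_n(C)-H(C)-H_2(C)\bigr| \le \sup_{C\in\mathcal C}\bigl|((Q_n-H-H_2)*K_\varepsilon)(C)\bigr| + c\,\sup_{C\in\mathcal C}(H+|H_2|)\bigl((\partial C)^\varepsilon\bigr).
\end{equation*}
The key geometric ingredient is that a non-degenerate Gaussian on $\R^2$ assigns mass $O(\varepsilon)$ to the $\varepsilon$-neighbourhood of the boundary of any convex set, uniformly in $\mathcal C$. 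Choosing $\varepsilon$ of order $n^{-3/2}$ balances this boundary term against the accuracy of the smoothed Fourier estimate (which is bounded by integrating the characteristic-function expansion against the rapidly decaying $\widehat K_\varepsilon$), delivering the uniform rate $o(n^{-1})$. The careful bookkeeping in this smoothing step, together with verifying that the error in the Fourier expansion is genuinely $o(n^{-1})$ and not merely $O(n^{-1})$, is where the bulk of the work lies.
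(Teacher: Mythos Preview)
The paper does not prove this statement: it is quoted verbatim from \cite{Bhattacharya2010} as Corollary~20.5 and placed in the appendix purely for the reader's convenience, with no proof given or attempted. Your outline is a faithful sketch of the classical Bhattacharya--Rao argument (characteristic-function expansion, Cram\'er tail control, smoothing over convex sets), so in that sense you are reproducing precisely what the paper defers to the reference; there is nothing to compare.
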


\subsection{Two-Dimensional Renewal Theory}

Renewal theory for multi-dimensional random walks dates back to works of \cite{Doney1966,Stam1969}, see also \cite{Carlsson1982,Hoeglund1988,Nagaev1979} and references therein. 
We use the following two-dimensional renewal theorem. Let $\bb U$ be a random vector in $\R^2$ with law $Q$ and corresponding renewal measure $\mathbb{U} = \sum_{n=0}^\infty Q^n$.
Write $\hat{Q}$ for the characteristic function of $Q$. 


\begin{theorem}[{\cite[Theorem 5.3]{Stam1969}}]\label{thm:Stam1}
	Assume that $Q$ is nonarithmetic and $\E [\norm{\bb U}^2] < \infty$. Denote $\bb m = \E[ \bb U]$. Then, for $c=\frac{\bb m}{\norm{\bb m}}$ and any bounded set $A$ such that $\lambda^2(\partial A)=0$, it holds
	$$ \lim_{t \to \infty} t^{1/2}\cdot\mathbb{U}(t \bb c+A)= (2 \pi)^{-1/2} (\det B)^{-1/2} \abs{m}^{-1/2} \lambda^2(A)$$
	for a strictly positive definite matrix $B$, determined by the law $Q$.
\end{theorem}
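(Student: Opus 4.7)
The plan is to combine a two-dimensional local limit theorem with a Gaussian Riemann-sum argument along the drift direction. After rotating coordinates one may assume without loss of generality that $\bb c = (1,0)$ and $\bb m = (|\bb m|, 0)$; let $\Sigma$ denote the covariance of $\bb U$ in these coordinates. Writing
$$\mathbb{U}(t\bb c + A) = \sum_{n \ge 0} \P(\bb S_n \in t \bb e_1 + A)$$
with $\bb S_n = \bb U_1 + \cdots + \bb U_n$, I would identify $n_t := t/|\bb m|$ as the typical number of steps needed to reach $t\bb c$, and show that the main contribution to the sum comes from $|n - n_t| \le t^{1/2+\varepsilon}$, the contribution outside this window being negligible via standard Chebyshev / moderate-deviation estimates using $\E\|\bb U\|^2 < \infty$.

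The key analytic input is a two-dimensional local limit theorem for nonarithmetic random walks with finite second moment: for bounded $A$ with $\lambda^2(\partial A) = 0$,
$$\P(\bb S_n \in \bb y + A) = \frac{\lambda^2(A)}{n}\, \phi_\Sigma\!\left(\frac{\bb y - n\bb m}{\sqrt n}\right) + o(1/n),$$
uniformly for $\|\bb y - n\bb m\| = O(\sqrt n \log n)$, where $\phi_\Sigma$ is the centered bivariate Gaussian density with covariance $\Sigma$. This follows by Fourier inversion using the expansion $1 - \hat Q(\xi) = -i \bb m \cdot \xi + \tfrac12 \xi^T \Sigma \xi + o(|\xi|^2)$ near the origin, combined with the nonarithmetic bound $\sup_{|\xi|\ge \delta} |\hat Q(\xi)| < 1$, after smoothing the indicator of $A$ by convolving with a Schwartz mollifier whose Fourier transform has compact support.

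Substituting $\bb y = t \bb e_1$ and reparametrising $n = n_t + k$ with $k \in \Z$, one has $(\bb y - n \bb m)/\sqrt n = (-k|\bb m|, 0)/\sqrt n$ up to lower-order corrections. Summing over $k$ with $|k| \le t^{1/2+\varepsilon}$, the expression becomes a Riemann sum with grid spacing $|\bb m|/\sqrt{n_t}$ approximating
$$ \int_\R \phi_\Sigma(u, 0)\, du = \frac{1}{\sqrt{2\pi\, \Sigma_{22}}},$$
so that $t^{1/2}\, \mathbb{U}(t\bb c + A) \longrightarrow (2\pi)^{-1/2} \Sigma_{22}^{-1/2} |\bb m|^{-1/2} \lambda^2(A)$. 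The matrix $B$ in the statement is thereby identified, intrinsically, as the covariance of $\bb U$ projected onto the hyperplane orthogonal to $\bb m$.

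The main obstacle is securing the uniformity of the local limit theorem over $\bb y$ and $n$ simultaneously on the moderate-deviation scale: for purely nonarithmetic laws with only finite second moments this is delicate and requires careful control of the Fourier-inversion integral on a small neighborhood of the origin together with the separation bound away from it. A secondary technical issue is passing from the smoothed indicator back to $\mathbf{1}_A$ itself, handled by monotone sandwich approximations made possible by the boundary condition $\lambda^2(\partial A) = 0$.
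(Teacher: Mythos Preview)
The paper does not prove this theorem at all: it appears in the appendix as a quoted result from \cite{Stam1969}, introduced with ``For the reader's convenience we quote here\ldots''. There is therefore no proof in the paper to compare against; you are supplying an argument where the authors simply cite the literature.

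Your outline is the classical local-limit-theorem route to multidimensional renewal theory and is sound in spirit, and your identification of $\det B$ with the transverse variance $\Sigma_{22}$ (in the rotated frame) is correct. One point deserves more care, however: the claim that the contribution from $|n-n_t|>t^{1/2+\varepsilon}$ is negligible ``via standard Chebyshev / moderate-deviation estimates'' does not go through as stated under a bare second-moment assumption. A direct Chebyshev bound gives
\[
\P\big(\bb S_n\in t\bb e_1+A\big)\ \le\ \frac{Cn}{(|n-n_t|\,|\bb m|)^2},
\]
and summing this over $t^{1/2+\varepsilon}<|n-n_t|\lesssim n_t$ yields a quantity of order $t^{1/2-\varepsilon}$, far from $o(t^{-1/2})$. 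Genuine moderate-deviation bounds are not available with only $\E\|\bb U\|^2<\infty$. The standard fixes are either (i) to use the Stone-type local limit theorem itself, whose Gaussian main term decays fast enough in $|n-n_t|$ and whose $o(1/n)$ error is uniform in the shift, so that the \emph{entire} sum over $n$ is handled by the LLT rather than split off; or (ii) to control the far tail in $n$ via the one-dimensional renewal structure in the drift coordinate (overshoot arguments), as in Stam's and Nagaev's original proofs. Either way, the ``outside the window'' step needs a real argument beyond Chebyshev.
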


%
%
%
%
%

We will also require a uniform bound for arbitrarily shifted sets (not necessarily in the direction of the drift). This is given by the following result of \cite{Carlsson1982}.
%

\begin{theorem}[{\cite[Theorem 1]{Carlsson1982}}]\label{thm:Carlsson}
	Under the assumptions of Theorem \ref{thm:Stam1} it holds that there is a constant $C$ such that for all $t>0$,
	$$  t^{1/2}\mathds{U}(t\bb c+\bb x+A)\leq C$$
	uniformly in $\bb x$ orthogonal to $\bb c$.
\end{theorem}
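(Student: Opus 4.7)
The plan is to prove the uniform bound via a Fourier-analytic approach, exploiting the structure of $1 - \hat{Q}$ near the origin. The statement upgrades Theorem \ref{thm:Stam1} — which gives a pointwise limit along the direction of drift — to a uniform bound over translates perpendicular to $\bb c = \bb m / \|\bb m\|$.

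First, I would reduce to the case $A = F_{\bb 0}$ the unit cube, by covering an arbitrary bounded $A$ with finitely many unit cubes. Next I would replace $\mathbf{1}_{F_{\bb 0}}$ by a smoothed majorant $\chi \ge \mathbf{1}_{F_{\bb 0}}$ with compactly supported Fourier transform $\hat{\chi}$; the smoothing error is bounded using the crude uniform estimate $\sup_{\bb y} \mathds{U}(\bb y + F_{\bb 0}) < \infty$, itself obtained from a Markov-property argument as in the proof of Lemma \ref{lem:dRi.proof.of.Renewal.Theorem}. Applying Fourier inversion (valid because $\hat{Q}$ is nonarithmetic and $\hat{\chi}$ is compactly supported) gives
\begin{equation*}
	(\mathds{U} * \chi)(t \bb c + \bb x) = \frac{1}{(2\pi)^2} \int_{\R^2} \frac{\hat{\chi}(\bb s)\, e^{it \langle \bb s, \bb c\rangle + i \langle \bb s, \bb x\rangle}}{1 - \hat{Q}(\bb s)}\, d\bb s.
\end{equation*}
Splitting the domain into $\{|\bb s| < \epsilon\}$ and $\{|\bb s|\ge \epsilon\}\cap \supp \hat{\chi}$, the second piece is harmless: the nonarithmetic hypothesis gives $|\hat{Q}(\bb s)| \le 1 - \delta_\epsilon$ on compact sets away from $\bb 0$, so the integrand is bounded and produces a contribution with exponential (or at least arbitrary polynomial) decay in $t$ via integration by parts against $e^{it \langle \bb s, \bb c\rangle}$.

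The heart of the argument lies in analyzing the neighborhood of the origin. There, the Taylor expansion
\begin{equation*}
	1 - \hat{Q}(\bb s) = -i \langle \bb m, \bb s\rangle + \tfrac{1}{2}\langle \bb s, B \bb s\rangle + o(|\bb s|^2)
\end{equation*}
exhibits a linear vanishing along directions not perpendicular to $\bb m$. The scaling $\bb u = \sqrt{t}\, \bb s$ reveals the correct rate: the phase becomes $e^{i \sqrt{t}\langle \bb u,\bb c\rangle + i\langle \bb u, \bb x\rangle/\sqrt{t}}$, and the dominant behavior of $1/(1-\hat{Q})$ transitions from the singular $(-i\|\bb m\|\,u_\parallel/\sqrt{t})^{-1}$ regime to the Gaussian regime governed by the perpendicular part $\langle \bb u_\perp, B \bb u_\perp\rangle/(2t)$. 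Principal-value evaluation of the $u_\parallel$-integral (coming from the rapid oscillation $e^{i\sqrt{t}\, u_\parallel}$), followed by Gaussian integration over $\bb u_\perp$, produces a factor of order $t^{-1/2}$, matching the asserted rate.

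The main obstacle will be controlling the dependence on $\bb x$ perpendicular to $\bb c$ uniformly. In the rescaled integral the $\bb x$-dependence enters as $e^{i \langle \bb u, \bb x\rangle/\sqrt{t}}$. For $\|\bb x\| \lesssim \sqrt{t}$ this is a bounded oscillation and the estimate goes through with constants independent of $\bb x$. For $\|\bb x\| \gg \sqrt{t}$, one must exploit the Gaussian decay provided by the quadratic term $\langle \bb u_\perp, B \bb u_\perp\rangle / (2t)$ in $1 - \hat{Q}$, which forces additional decay in $\|\bb x\|^2 / t$ and compensates the potential loss. Carefully stitching these two regimes together — and showing that the error terms from the $o(|\bb s|^2)$ remainder in the expansion are dominated uniformly — is the delicate part of the argument, carried out in \cite{Carlsson1982}.
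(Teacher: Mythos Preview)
The paper does not supply a proof of this statement: Theorem~\ref{thm:Carlsson} appears in the appendix as a quoted result from \cite{Carlsson1982}, included for the reader's convenience alongside Theorem~\ref{thm:Stam1} and the Edgeworth expansion. There is therefore nothing in the paper to compare your proposal against.

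That said, your sketch is a reasonable outline of the Fourier-analytic method that Carlsson's paper in fact uses: reduction to a unit cube, smoothing to obtain a compactly supported $\hat\chi$, inversion against $(1-\hat Q)^{-1}$, and a local analysis near the origin via the second-order expansion of $1-\hat Q$. Since you yourself defer the delicate uniform-in-$\bb x$ estimates to \cite{Carlsson1982} at the end, your write-up is best read not as an independent proof but as an informal road map to that reference --- which is consistent with how the present paper treats the result.
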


Note that \cite{Carlsson1982} uses the term nonlattice instead of nonarithmetic as in \cite{Stam1969}, but their definitions coincide.


\end{document}